\documentclass[a4paper,11pt,oneside]{article}
\usepackage[paperwidth=195mm,paperheight=270mm,left=20mm,right=17mm,top=20mm,bottom=20mm,includefoot]{geometry}

\usepackage{amsmath,amssymb,mathrsfs,amscd,amsthm}

\usepackage{hyperref}

\usepackage{xcolor}
\usepackage{graphicx}
\usepackage{pinlabel}
\usepackage{hyperref}
\usepackage{mathtools}
\usepackage{enumitem}

\newtheorem{Theorem}{Theorem}[section]
\newtheorem{Definition}[Theorem]{Definition}
\newtheorem{Lemma}[Theorem]{Lemma}
\newtheorem{Proposition}[Theorem]{Proposition}
\newtheorem{Corollary}[Theorem]{Corollary}
\newtheorem{Remark}[Theorem]{Remark}

\newcommand{\UMBspace}{\ensuremath{\mathcal{UMB}}}  
\newcommand{\UMBclass}[1][N]{\ensuremath{\mathcal{UMB}(#1)}} 

\usepackage[numbers,sort&compress]{natbib}
\begin{document}
\title{On Uniformly Perfect Morse Boundaries}
\author{Suzhen Han and Qing Liu}

\maketitle

\begin{abstract}
 We introduce and geometrically characterize the notion of uniformly perfect Morse boundary for proper geodesic metric spaces. As a unifying result, we prove that the Morse boundary of any finitely generated, non-elementary group is uniformly perfect whenever it is nonempty. This theorem applies to a broad class of groups, including all acylindrically hyperbolic groups, Artin groups, and hierarchically hyperbolic groups.
 
 Furthermore, we establish a rigidity theorem for homeomorphisms between such boundaries: for any two spaces with uniformly perfect Morse boundaries, a homeomorphism is induced by a quasi-isometry if and only if it satisfies any one of several natural geometric conditions. These conditions include being bi-H\"older, quasi-conformal, quasi-symmetric, or $2$-stable and quasi-M\"obius.
\end{abstract}

\renewcommand{\thefootnote}{\alph{footnote}}
\setcounter{footnote}{-1} \footnote{Keywords: Uniformly perfect, Morse boundary,  Morse geodesically rich, rigidity.}

\section{Introduction and main results}

The Gromov boundary serves as a fundamental invariant of Gromov hyperbolic spaces, bridging their internal geometry with topology. 
A cornerstone result of Gromov \cite{Gromov87} asserts that quasi-isometries between such spaces induce homeomorphisms on their boundaries. Consequently, for a hyperbolic group, the Gromov boundary is a quasi-isometry invariant, well-defined up to homeomorphism.
 
A natural generalization of the Gromov boundary is the Morse boundary $\partial_*X$ of a proper geodesic metric space $X$. It is defined as the set of equivalence classes of Morse geodesic rays, where two rays are equivalent if their images lie within bounded Hausdorff distance. It is classical that all geodesics in a hyperbolic space are uniformly Morse. 
The Morse boundary was first constructed for CAT(0) spaces by Charney and Sultan \cite{CharneySultan15} and later extended to all proper geodesic metric spaces by Cordes \cite{Cordes17}. 
Since then, it has been extensively studied and shown to inherit many key properties of the Gromov boundary \cite{CharneyCordesMurray19, CharneyCordesSisto23, CordesDurham19, CordesHume17, Merlin21, Liu21, Liu22, MousleyRussell19, Murray19, Zbinden23, Zbinden24}. In this paper, we focus on a specific foundational property of this boundary—its \textit{uniform perfectness}.

The concept of \textbf{uniform perfectness} first emerged in conformal dynamics (credited to Beardon and Pommerenke \cite{BeardonPommerenke78, Pommerenke79}) and geometric function theory (Tukia and Väisälä \hspace{0pt}\cite{Tukia80}), where it was originally termed \textit{homogeneous denseness}.
Over subsequent decades, it has proven to be a remarkably versatile property with deep implications across analysis, geometry, and topology \cite{BJ97, Canary91, HR95, JV96, MacManusN-Palka99, MAR18, McMullen90, McMullen98, Sugawa2001, Sugawa01, Vuorinen84}. 

For general metric spaces, several equivalent definitions of uniform perfectness are available \cite{MAR18, Nowak12, Sugawa01}. Notably, Martínez-Pérez and Rodríguez \cite{MAR18} adapted the notion to bounded metric spaces and applied it to Gromov boundaries of hyperbolic manifolds and graphs. More recently, Liang and Zhou \cite{LiangZhou24} used this framework to study hyperbolic spaces.

A fundamental distinction between the Gromov and Morse boundaries lies in topology: the Morse boundary is typically non‑metrizable in its standard topology. This intrinsic feature poses a significant obstacle to directly transplanting the classical theory of uniform perfectness.
Although alternative metrizable topologies have been constructed for the Morse boundaries of certain groups \cite{Cashen19, CordesDussauleGekhtman22, qing24}, the resulting metrics are geometrically opaque and ill‑suited for concrete applications. 
Our work bridges this gap. We introduce a notion of uniform perfectness that is intrinsically adapted to the Morse boundary (Definition~\ref{UP}) and establish equivalent geometric characterizations for it in the setting of proper geodesic metric spaces. Our first main result is the following:
 
\begin{Theorem}\label{thm:equivalence}
Let $X$ be a proper geodesic metric space whose Morse boundary $\partial_*X$ contains at least $3$ points. Then the following statements are equivalent:
\begin{enumerate}
    \item $\partial_*X$ is uniformly perfect and $X$ is a uniformly Morse based space;
    \item $X$ is Morse geodesically rich;
    \item $X$ is center-exhaustive.
\end{enumerate}
Furthermore, each of the above equivalent conditions implies that $X$ is Morse boundary rigid.
\end{Theorem}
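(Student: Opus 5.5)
The definitions are not yet on the page, so I will work with the natural guesses. I take \emph{uniformly Morse based} to mean that there are a basepoint $o$ and a Morse gauge $N_0$ such that every point of $\partial_*X$ is represented by an $N_0$-Morse ray from $o$ (up to changing the gauge). I take \emph{Morse geodesically rich} to mean that there are $C\ge 0$ and a gauge $N$ such that every point $x\in X$ lies within $C$ of an $N$-Morse bi-infinite geodesic. I take \emph{center-exhaustive} to mean that there is a gauge $N$ such that the $N$-Morse centers of triples of distinct points of $\partial_*X$ are coarsely dense in $X$. Uniform perfectness (Definition~\ref{UP}) I read as a condition on the stratum $\partial_*^{N}X$ with a visual-type metric based at $o$: each annulus $B(\xi,r)\setminus B(\xi,r/\lambda)$ is nonempty for $r$ below the diameter. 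Equivalently, for each point $x$ on a Morse ray $[o,\xi)$ there is a boundary point $\eta$ whose ray from $o$ leaves $[o,\xi)$ at distance comparable to $d(o,x)$, within a uniform additive error.

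The plan is the cycle $(2)\Rightarrow(3)\Rightarrow(1)\Rightarrow(2)$, followed by rigidity.

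For $(2)\Rightarrow(3)$, take $x$ and an $N$-Morse line $\gamma$ passing $C$-close to $x$, with endpoints $\gamma^\pm$. I need a third boundary point $\zeta$ whose projection to $\gamma$ lands near $x$. To get it, I take a second line $\gamma'$ through a point of $\gamma$ at distance about $R$ from $x$, where $R$ is a constant fixed by $N$ and $C$. Using the Morse property, $\gamma'$ must leave $\gamma$ quickly. I then expect one of its endpoints to serve as $\zeta$, after which slim-triangle estimates for Morse triangles (Cordes, Charney--Cordes--Murray) show that $x$ is a coarse center of $(\gamma^+,\gamma^-,\zeta)$. If $\gamma'$ fellow-travels $\gamma$ for a long stretch, I slide to a different point of $\gamma$ and repeat; the argument must show this cannot happen everywhere.

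For $(3)\Rightarrow(1)$, fix a triple and take $o$ to be its center. Any $\xi\in\partial_*X$ together with two of the triple points spans a Morse triangle. Coarse density of centers, combined with the fact that every point is a center, should bound the Morse gauge of $[o,\xi)$ uniformly. I expect this step to use the lemma that rays from a center of an $N$-triangle are $N'$-Morse. That gives uniform Morse basedness. For uniform perfectness, given $x\in[o,\xi)$, pick a center $c$ of some triple $(\alpha,\beta,\delta)$ near $x$. At least two of the rays from $c$ toward $\alpha,\beta,\delta$ diverge from $[x,\xi)$. The endpoint $\eta$ of such a ray branches off $[o,\xi)$ near $x$, which yields the annulus point.

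For $(1)\Rightarrow(2)$, for $x$ far from $o$ choose $\xi$ with $x$ near $[o,\xi)$. This uses that $\partial_*X$ has at least $3$ points, together with a visibility argument showing that rays from $o$ coarsely cover $X$; otherwise I would reduce to $x$ on such a ray. Uniform perfectness then supplies $\eta$ branching at $x$. The bi-infinite Morse geodesic from $\xi$ to $\eta$, which exists by the visibility of the Morse boundary, passes uniformly close to $x$.

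Rigidity then follows by quoting the paper's later rigidity criterion (Charney--Murray-style quasi-Möbius/2-stable conditions), since center-exhaustiveness is exactly the hypothesis needed to extend a boundary map to a coarse map on centers. The main obstacle I anticipate is $(2)\Rightarrow(3)$ and, in $(1)\Rightarrow(2)$, the covering of points $x$ that lie off every Morse ray from $o$. My first attempt at the latter is to use the full Morse geodesic through $x$ and argue that its far endpoint, seen from $o$, is uniformly Morse.
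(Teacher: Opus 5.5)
\textbf{There is a genuine gap.} Your plan cannot be carried out as written. The main reason is that two of the definitions you guessed differ from the paper's, and under your versions some of the implications are false.

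\emph{Morse geodesic richness.} Your version (every point lies within $C$ of a uniformly Morse line) drops the essential second clause of Definition~\ref{Def:GeoRich}. That clause requires, for every $N$-Morse pair $x_1,x_2$, a line $\alpha$ with $d(x_1,\alpha)<C_0$ \emph{and} $|d(x_1,x_2)-d(x_2,\alpha)|<C_1$. With your weak version, $(2)\Rightarrow(3)$ is false. Take the tripod (three rays glued at a vertex): every point lies on a bi-infinite geodesic and $\partial_*X$ has three points. But a point at distance $t$ from the vertex is a $K$-center only if $t\le K$, and the finite boundary is not uniformly perfect. So your ``slide along $\gamma$ until a second line branches off'' step has no reason to stop within bounded distance, and no refinement will fix it. The missing idea is to use the second clause against a chosen direction. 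The paper takes a uniformly Morse ray $\sigma$ from $a$ (Lemma~\ref{N-ray}) and applies richness to $a$ and a far point $b\in\sigma$. Then $d(b,\alpha)\approx d(a,b)$ forces $\sigma_+\notin\{\alpha_\pm\}$ and makes $a$ a coarse center of $(\sigma_+,\alpha_-,\alpha_+)$.

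\emph{UMB.} Your reading (all of $\partial_*X$ uniformly Morse from one basepoint) is not Definition~\ref{def:UMB}, which asks for an $N$-Morse ray issuing from \emph{every} point of $X$. Your claim in $(3)\Rightarrow(1)$, that center-exhaustiveness bounds the gauge of every $[o,\xi)$ uniformly, is false. In $\mathbb{Z}^2*\mathbb{Z}$, which is center-exhaustive by Proposition~\ref{cocomp}, rays from $o$ that run along a flat for length $n$ and then leave are Morse, but their gauges degenerate as $n\to\infty$. The lemma you cite (rays from a coarse center are uniformly Morse) gives exactly the correct UMB property, since every point is a center. Your direct sketch of uniform perfectness from $(3)$ is salvageable, but you must pick a ray from the center that diverges from both $[x,\xi)$ and $[x,o]$. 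With three rays, such a ray exists.

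\emph{The step $(1)\Rightarrow(2)$.} You need rays from a fixed $o$ to coarsely cover $X$, which nothing in the hypotheses provides, and your target is again the weak form of richness. What you are not using is that Definition~\ref{UP} is uniform over all basepoints. The paper proves $(1)\Rightarrow(3)$ instead, working at an arbitrary point $o$:
\begin{enumerate}
\item UMB gives some $p\in\partial_*^NX_o$.
\item Uniform perfectness at the scales $r_0$ and $r_0/(2S)$ yields $q_1,q_2$ whose three pairwise visual distances all exceed $r_0/(2S^2)$.
\item Remark~\ref{dis} converts this into $d(o,[p,q_1]),\,d(o,[p,q_2]),\,d(o,[q_1,q_2])\le K$, with $K$ depending only on $N$. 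The Morse gauges of these sides are controlled by Theorem~\ref{relation} and Lemma~\ref{slim-M}.
\end{enumerate}
Together with $(2)\Leftrightarrow(3)$ and $(2)\Rightarrow(1)$ (Proposition~\ref{MGR-UP} for uniform perfectness, Lemma~\ref{N-ray} for UMB), this closes the loop.

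\emph{Rigidity.} This does not follow from the extension theorem. That theorem produces a quasi-isometry inducing a given boundary map, but says nothing about uniqueness, which is what injectivity of $\partial$ requires. The needed argument is direct. Suppose $\partial f=\mathrm{id}$ and $x$ is a $K$-center of $(p,q,r)$. Then $f$ maps the three sides to quasi-geodesics with the same endpoints, which are uniformly Hausdorff-close to the sides by the Morse property. So $f(x)$ is a $K'$-center of the same triple, and Lemma~\ref{Lemma:Center} bounds $d(x,f(x))$.
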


The notion of \textbf{geodesic richness} for geodesic metric spaces was first introduced in \cite{Shchur13}.  
Here we extend it to \textbf{Morse geodesic richness} for arbitrary proper geodesic metric spaces (Definition~\ref{Def:GeoRich}).
\textbf{Boundary rigid space}, introduced in \cite{LiangZhou24}, addresses the following natural question: for a hyperbolic space $X$, the canonical homomorphism $\partial\colon\mathcal{QI}(X)\to\operatorname{Homeo}(\partial X)$ from the quasi-isometry group to the homeomorphism group of the boundary—when is it injective? We say $X$ is boundary rigid if $\partial$ is injective. The analogous notion for the Morse boundary is given in Definition~\ref{MBR}. For proper cocompact spaces, we obtain a direct characterization:

\begin{Theorem}\label{thm:cocompact}
Let $X$ be a proper, cocompact geodesic metric space whose Morse boundary $\partial_*X$ contains at least $3$ points. Then $\partial_*X$ is uniformly perfect, and $X$ is both Morse geodesically rich and Morse boundary rigid.
\end{Theorem}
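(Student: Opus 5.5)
By Theorem~\ref{thm:equivalence}, it is enough to show that a proper cocompact geodesic space $X$ with $|\partial_*X|\ge 3$ is Morse geodesically rich. Uniform perfectness and the uniformly Morse based property then come from implication (2)$\Rightarrow$(1), and Morse boundary rigidity comes from the final clause of that theorem.

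The plan is to produce a single "Morse tripod" configuration near a basepoint and then spread it over all of $X$ using cocompactness. Cocompactness gives a compact set $K\subset X$ (we may assume it has diameter at most some $D$) whose translates under the isometry group $\mathrm{Isom}(X)$ cover $X$.

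First, fix three distinct points $\xi_1,\xi_2,\xi_3\in\partial_*X$. A standard fact about Morse boundaries, which I will either assume from the earlier sections or reprove from Cordes' construction, is that any two distinct points of $\partial_*X$ are joined by a bi-infinite Morse geodesic. Let $\gamma_{ij}$ be such a geodesic from $\xi_i$ to $\xi_j$. Each $\gamma_{ij}$ is $N_{ij}$-Morse for some gauge $N_{ij}$. Set $N$ to be a gauge dominating all three (for example their pointwise maximum, enlarged as needed for the slimness of ideal Morse triangles). The triangle $\gamma_{12}\cup\gamma_{23}\cup\gamma_{13}$ is then $\delta_N$-slim. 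Hence there is a point $c$ (a coarse center) lying within a uniform distance $C$ of all three sides.

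Second, transport this configuration. Given any point $x\in X$, pick $g\in\mathrm{Isom}(X)$ with $d(x,gc)\le D'$, where $D'$ depends only on $K$, $c$ and the choice of basepoint. Since $g$ is an isometry, the geodesics $g\gamma_{ij}$ are still $N$-Morse. They pass within $C+D'$ of $x$, and their endpoints are three distinct boundary points. These constants do not depend on $x$.

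Whatever precise formulation of Definition~\ref{Def:GeoRich} is in force (for instance: for every $x$ and every Morse ray from $x$, a second $N'$-Morse geodesic through a bounded neighborhood of $x$ that diverges from the given one; or an $N$-Morse bi-infinite geodesic through a uniform neighborhood of every point, with endpoints suitably separated), I expect it to be verified by choosing an appropriate pair among the three transported geodesics. If the definition requires the new geodesic to diverge from a \emph{prescribed} ray $\alpha$ based at $x$, then at most one of the three ends $g\xi_i$ can equal the endpoint of $\alpha$, and the distinctness of the $\xi_i$ lets us pick a pair avoiding it. The remaining work is to control the Gromov-product-type separation quantitatively, and I will do this using the slim-triangle constant.

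\textbf{Main obstacle.} The hardest step is uniformity of the Morse gauge for the auxiliary geodesic. If Definition~\ref{Def:GeoRich} requires a geodesic through a neighborhood of $x$ that diverges from an \emph{arbitrary} given Morse ray, possibly with a large gauge, then the gauge of the concatenation must be controlled independently of that ray. My first attempt will be to avoid concatenation altogether and use only the fixed gauge $N$ from the three translated geodesics, together with thin-triangle arguments for Morse triangles with one side of arbitrary gauge. If that fails, I will use the standard lemma that a bi-infinite geodesic whose two halves are $N$-Morse rays meeting at a bounded angle is $N'$-Morse, with $N'$ depending only on $N$.
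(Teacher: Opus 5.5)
Your transport step already proves the theorem, but you aim it at the wrong condition. Translating one coarse center $c$ of $(\xi_1,\xi_2,\xi_3)$ by isometries shows that every $x\in X$ lies within $C+D'$ of all three sides of an ideal triangle with vertices $(g\xi_1,g\xi_2,g\xi_3)\in\partial_*X^{(3,N)}$, for one fixed $N$. This is exactly center-exhaustiveness (Definition~\ref{def:center-exh}), which is condition (3) of Theorem~\ref{thm:equivalence}. That is precisely the paper's proof (Proposition~\ref{cocomp}). Theorem~\ref{thm:equivalence} then gives Morse geodesic richness, uniform perfectness and Morse boundary rigidity at once, with nothing left to check.

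As written, however, you reduce to Morse geodesic richness and leave that step unverified. This is a genuine gap: you never use Definition~\ref{Def:GeoRich}, and none of your guessed formulations is the one in force. The definition requires the following for $x_1,x_2$ joined by an $N'$-Morse geodesic, with $N'$ arbitrary: a bi-infinite $N_0$-Morse geodesic $\alpha$ with $d(x_1,\alpha)<C_0$ and $|d(x_1,x_2)-d(x_2,\alpha)|<C_1(N')$. In other words, $x_1$ must be coarsely the nearest point of $\alpha$ to $x_2$. Choosing a pair of ends that avoids a prescribed endpoint says nothing about this estimate.

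Deriving it from the tripod is the content of Proposition~\ref{cce-mgr}. For two sides, consider the $(3,K)$-quasi-geodesic that runs from $x_1$ to a nearby point of that side, along the side to the projection of $x_2$, and then to $x_2$. Both such paths fellow-travel the $N'$-Morse segment $[x_1,x_2]$. This forces one of the two projections to be a coarse center, hence uniformly close to $x_1$ by Lemma~\ref{Lemma:Center}.

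Your ``main obstacle'' also does not arise. The geodesic $\alpha$ is just a side of the translated tripod, so $N_0$ can be taken to be your fixed $N$, and all dependence on $N'$ goes into $C_1(N')$, which the definition permits. So either stop at condition (3) and invoke Theorem~\ref{thm:equivalence}, or supply this projection argument.
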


A key link between algebra and geometry is provided by the following fact (Corollary~5.8 in \cite{Liu21}): for a finitely generated group $G$, if $\partial_*G$ is non‑empty and $G$ is not virtually cyclic, then $\partial_G$ contains infinitely many (hence at least three) points.
Combining this with Theorem~\ref{thm:cocompact}, we obtain a unified criterion that ties together algebraic, geometric, and boundary‑theoretic properties:
\begin{Theorem}\label{groupapp}
      Let $G$ be a finitely generated group with non-empty Morse boundary that is not virtually cyclic. Then the Morse boundary $\partial_* G$ is uniformly perfect, and $G$ is Morse geodesically rich and Morse boundary rigid. 
\end{Theorem}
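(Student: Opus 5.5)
Theorem~\ref{groupapp} is a direct consequence of Theorem~\ref{thm:cocompact}, once the algebraic hypotheses are turned into the geometric ones. Fix a finite generating set $S$ of $G$ and let $X=\mathrm{Cay}(G,S)$ be the Cayley graph with its path metric, each edge of length one.

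\textbf{Step 1: $X$ satisfies the standing hypotheses of Theorem~\ref{thm:cocompact}.}
\begin{itemize}
\item $X$ is geodesic, since it is a connected graph with unit edges.
\item $X$ is proper: $S$ is finite, so $X$ is locally finite, and closed balls are finite unions of edges, hence compact.
\item $X$ is cocompact: $G$ acts on $X$ by left multiplication, isometrically and cocompactly, and a fundamental domain is the closed star of the identity vertex.
\end{itemize}
The Morse boundary is a quasi-isometry invariant up to homeomorphism \cite{Cordes17}. So $\partial_*G$ is, by definition, $\partial_*X$ for any such Cayley graph, and the conclusion does not depend on the choice of $S$.

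\textbf{Step 2: $\partial_*X$ has at least three points.} Here we invoke Corollary~5.8 of \cite{Liu21}. Since $\partial_*G\neq\emptyset$ and $G$ is not virtually cyclic, $\partial_*G$ is infinite, and in particular has at least $3$ points.

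If one prefers not to cite this, there is a standard argument.
\begin{itemize}
\item A Morse ray gives, by cocompactness and a limiting argument, a bi-infinite Morse geodesic. One can then produce a Morse element $g\in G$, and $g$ has two fixed points $g^{\pm}$ in $\partial_*G$.
\item If every element of $G$ preserved $\{g^\pm\}$, then $G$ would be virtually cyclic, since it would lie in the elementary subgroup $E(g)$, which contains $\langle g\rangle$ with finite index.
\item So some $h\in G$ moves one of these points. Translating by powers of $g$ then gives infinitely many distinct boundary points.
\end{itemize}

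\textbf{Step 3: apply Theorem~\ref{thm:cocompact}.} This gives that $\partial_*X=\partial_*G$ is uniformly perfect, and that $X$ is Morse geodesically rich and Morse boundary rigid.

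\textbf{Main obstacle: these properties must be well defined for $G$.} Uniform perfectness of $\partial_*G$, Morse geodesic richness of $G$, and Morse boundary rigidity of $G$ are a priori properties of a chosen Cayley graph. The theorem holds for every finite generating set, since Steps 1--3 apply to each. Interpreting the statement as "for some, equivalently every, Cayley graph of $G$" therefore makes it well defined without separately checking quasi-isometry invariance of Definitions~\ref{UP}, \ref{Def:GeoRich} and \ref{MBR}.

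The only genuine input beyond Theorem~\ref{thm:cocompact} is the three-point condition. I expect no difficulty there, because it is supplied by \cite{Liu21}.
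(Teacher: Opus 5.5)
Your proposal matches the paper's argument: Theorem~\ref{groupapp} is obtained by applying Theorem~\ref{thm:cocompact} to a Cayley graph, with the three-point hypothesis supplied by Corollary~5.8 of \cite{Liu21}. The paper makes the properties well defined for $G$ through the quasi-isometry invariance results of Section~5, but your observation that the argument applies to every Cayley graph also suffices. Your optional self-contained argument for the three-point condition is unnecessary, and its step producing a Morse element from a Morse ray is asserted rather than justified, so relying on the citation, as the paper does, is the right choice.
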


The hypotheses of this theorem are satisfied by a large and prominent class of groups, including (but not limited to): acylindrically hyperbolic groups (including hyperbolic and relatively hyperbolic groups, mapping class groups, and $\text{Out}(F_n)$); Artin groups; Coxeter groups; small cancellation groups; hierarchically hyperbolic groups (including CAT(0) groups, 3-manifold groups, and others); and Morse local-to-global groups.
 See, e.g.,
\cite{CRSZ22, CordesHume17, CharneyCordesSisto23, cordes2025connectivity, GraeberKarrerLazarovich21, han2024growth, MousleyRussell19, Osin16, RussellSprianoTran22, Sisto16, Susse2023, Tran2019, Zbinden23, zbinden23Small, Zbinden24}.

A central theme in geometric group theory is the extent to which a space is determined by its boundary. For Gromov hyperbolic spaces, this question has a rich history \cite{Paulin96, Bourdon95, BonkSchramm00}, and has been extended to more general settings \cite{CharneyCordesMurray19, MousleyRussell19, Liu22}.

The uniform perfectness of the Morse boundary provides a natural framework for extending boundary homeomorphisms to quasi‑isometries. A key novelty of our work is revealing that uniform perfectness is precisely the condition that enables such extensions. Our rigidity theorem is as follows:

\begin{Theorem}\label{homeo}
Let $X, Y$ be proper geodesic metric spaces and let $f: \partial_*X\to \partial_*Y$ be a homeomorphism.
Suppose that each of $X$ and $Y$ satisfies one of the equivalent conditions in Theorem~\ref{thm:equivalence}. Then the following statements are equivalent:
\begin{enumerate}
\item $f$ is induced by a quasi-isometry $F:X\to Y$;
\item $f$ and $f^{-1}$ are bi‑Hölder;
\item $f$ and $f^{-1}$ are quasi‑conformal;
\item $f$ and $f^{-1}$ are quasi‑symmetric;
\item $f$ and $f^{-1}$ are $2$‑stable and quasi‑Möbius.
\end{enumerate}
\end{Theorem}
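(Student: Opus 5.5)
The strategy is to pass everything through the \emph{Morse quasi-cross-ratio} on triples and quadruples of boundary points. The implication (1)$\Rightarrow$(2)--(5) is the standard "quasi-isometries are coarsely Möbius" argument. The converse directions reduce to one core statement: a $2$-stable quasi-Möbius homeomorphism between uniformly perfect Morse boundaries of Morse geodesically rich spaces extends to a quasi-isometry.

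For the forward direction, I will work inside a fixed Morse stratum $\partial_*^N X$, which is metrizable. Let $F\colon X\to Y$ be a quasi-isometry. It sends $N$-Morse rays to $N'$-Morse quasi-rays, and these are at bounded distance from $N'$-Morse geodesic rays. I equip each stratum with the visual quasi-metric $\rho_o(\xi,\eta)=e^{-\epsilon (\xi\cdot\eta)_o}$. Gromov products of $N$-Morse points are coarsely controlled by distances from $o$ to bi-infinite Morse geodesics, and these distances change under $F$ by at most a bounded multiplicative and additive error. This gives that $f$ is bi-Hölder on strata. The cross-ratio is, up to a bounded additive error, a signed distance between projections onto Morse geodesics, which $F$ distorts linearly. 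Hence $f$ is quasi-Möbius, and it is $2$-stable because $F$ preserves the "$2$-point" configurations (pairs of points with controlled Gromov product). Quasi-symmetry and quasi-conformality then follow by the usual comparison of annuli. At this stage uniform perfectness is used only to ensure that the relevant annuli are nonempty, so that quasi-symmetry and quasi-Möbius agree.

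For the converse, I first show that each of (2), (3), (4) implies (5). Bi-Hölder maps are not automatically quasi-Möbius. However, under center-exhaustiveness (Theorem~\ref{thm:equivalence}(3)) every point $x\in X$ is coarsely the center of an ideal triangle of $N$-Morse rays with uniform $N$. Uniform perfectness then lets me rescale at $x$: I choose boundary points at all comparable scales around the endpoints. Bi-Hölder and quasi-conformal control at every basepoint then upgrades to quasi-symmetry, via a Tukia--Väisälä-type argument, and quasi-symmetry implies $2$-stable quasi-Möbius. I expect this upgrade to be the main obstacle. Because the topology is not metrizable globally, I must verify that the stratum $N$ and the center constants stay uniform under $f$. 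For this I plan to use the Morse boundary rigidity provided by Theorem~\ref{thm:equivalence} together with the fact that homeomorphisms of Morse boundaries send Morse strata into strata in a controlled way on compact sets.

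The core construction, (5)$\Rightarrow$(1), defines $F(x)$ as follows. By center-exhaustiveness, pick a triple $(\xi_1,\xi_2,\xi_3)$ of $N$-Morse boundary points whose ideal triangle has $x$ as a coarse center. Set $F(x)$ to be a coarse center in $Y$ of $(f\xi_1,f\xi_2,f\xi_3)$. Since $f$ is $2$-stable, the image triangle is uniformly Morse, so the center exists in $Y$. Well-definedness up to bounded error follows from quasi-Möbius control. Two triples centered near $x$ have bounded cross-ratios among their six points, so their images have bounded cross-ratios, and hence their centers are close. The quasi-isometric inequalities come from the same mechanism. For $x,x'$, choose triples so that $d(x,x')$ is coarsely a cross-ratio of four of the points; this is possible by Morse geodesic richness, since $x$ and $x'$ lie near a common $N$-Morse geodesic. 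The quasi-Möbius bound $\eta$ then gives $d(F x,F x')\lesssim d(x,x')$, and the same argument applied to $f^{-1}$ gives the reverse bound. Coarse surjectivity comes from applying the construction to $f^{-1}$, and by construction $F$ induces $f$.
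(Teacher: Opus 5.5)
Your forward direction and your center-extension idea for (5)$\Rightarrow$(1) follow the standard route. The converse for (2)--(4), however, has a genuine gap.

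\textbf{The reduction of (2)--(4) to (5).} You flag this as the main obstacle, and the tools you propose do not deliver it. A Tukia--V\"ais\"al\"a-type argument upgrades weak quasisymmetry to quasisymmetry, or quasisymmetry to quasi-M\"obius, on uniformly perfect spaces. It cannot upgrade bi-H\"older or metrically quasi-conformal maps. Already on $S^1$ with a fixed visual metric, take the homeomorphism $g$ given in a chart by $t\mapsto t$ for $t\le 0$ and $t\mapsto t^2$ for $0\le t\le 1$. It is bi-H\"older, but $|g(t)-g(0)|/|g(-t)-g(0)|=t\to 0$, so it is not quasisymmetric. Likewise, quasi-conformal$\Rightarrow$quasisymmetric needs Loewner-type connectivity, which Morse boundaries (often totally disconnected) lack.

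What rescues the implication is the uniformity over all basepoints built into the definitions. To exploit it, you must know which basepoint of $Y$ corresponds to a given $x\in X$, namely a center of the image triple. You must also show that nearby centers go to nearby image centers, and that is exactly Proposition~\ref{key}.

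Similarly, $2$-stability is a basepoint-uniform statement. Compactness of strata gives at best $f(\partial_*^N X_o)\subset\partial_*^{N'}Y_{o'}$ with $N'$ depending on $o$. The uniform version is the basetriangle stability of \cite{Liu22}, which must be extracted from the definitions in (2)--(4). Morse boundary rigidity only gives uniqueness of an extension up to bounded distance, so it plays no role here.

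Once you have these two ingredients you can build the quasi-isometry directly, and that is what the paper does; it never passes through (5).
\begin{enumerate}
\item It defines $\Phi_f(x)$ as a center of $(f(p),f(q),f(r))$ for a uniformly Morse triple centered near $x$.
\item It gets coarse Lipschitz bounds for $\Phi_f$ and $\Phi_{f^{-1}}$ from Proposition~\ref{key}.
\item It uses basetriangle stability of $f^{-1}$ to show that the pulled-back triples $(p,q,r)$ are uniformly Morse. Then Proposition~\ref{key} applies, and $\Phi_f\circ\Phi_{f^{-1}}$ is close to the identity.
\item It handles (5) only by citing \cite{CharneyCordesMurray19}.
\end{enumerate}

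\textbf{The upper bound in (5)$\Rightarrow$(1).} Your sketch also has a faulty step here. Definition~\ref{Def:GeoRich} applies only to pairs already joined by an $N$-Morse geodesic. The bi-infinite geodesic $\alpha$ it produces passes near $x_1$, while $x_2$ stays at distance about $d(x_1,x_2)$ from $\alpha$. In general (for instance, two far-apart points of a flat) no uniformly Morse geodesic passes near both $x$ and $x'$, so $d(x,x')$ cannot be read off a single cross-ratio. Even when it can, turning the quasi-M\"obius control function into a linear bound would require it to be of power type.

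The standard repair, used in the paper and in \cite{CharneyCordesMurray19}, has three steps:
\begin{enumerate}
\item Prove only the bounded-to-bounded statement (Proposition~\ref{extenbound}).
\item Chain it along a geodesic $[x,x']$ cut into unit pieces to get $d(\Phi_f(x),\Phi_f(x'))\le A\,d(x,x')+A$.
\item Obtain the lower bound and coarse surjectivity from $\Phi_{f^{-1}}$ being a coarse inverse. In case (5), this uses $2$-stability of $f$ to keep the image triples uniformly Morse.
\end{enumerate}
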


 The paper is organized as follows. Section~2 reviews the necessary background on Gromov hyperbolic spaces, Morse geodesics, and the Morse boundary. Section~3 is devoted to introducing the key notions of Morse geodesic richness, center-exhaustiveness, and uniform perfectness of the Morse boundary. Section~4 presents the proof of our main characterization theorem (Theorem~\ref{thm:equivalence}). Section~5 demonstrates that these properties are invariant under quasi‑isometry. The final section contains the proof of the rigidity theorem (Theorem~\ref{homeo}).

\vspace{1em}

   \section*{Acknowledgements}
   Both authors thank Wenyuan Yang, Hao Liang, Chenxi Wu, and Qingshan Zhou for helpful conversations. We are also grateful to the referee for their careful reading and many valuable comments and suggestions. Han is supported by NSFC (Grant No. 12401082). Liu is partially supported by NSFC (Grant No.12301084) and the Natural Science Foundation of Tianjin (Grant No. 22JCYBJC00690, 22JCQNJC01080).

\section{Preliminaries}

We recall basic definitions and fix notation. Throughout, $X$ denotes a metric space with the metric $d$.

A \textbf{geodesic} in $X$ is an isometric embedding of a finite or infinite interval of $\mathbb{R}$ into $X$; we identify it with its image.
$X$ is a \textbf{geodesic metric space} if any two points are joined by a geodesic, and \textbf{proper} if all closed balls are compact.

For $A\subseteq X$ and $r\ge0$, set $\mathcal{N}_r(A):=\{x\in X:d(x,A)\le r\}$ where $d(x,A)=\inf_{a\in A}d(x,a)$. The \textbf{Hausdorff distance} is
\[
d_{\mathcal{H}}(A_1,A_2):=\inf\{r\ge0:A_1\subseteq\mathcal{N}_r(A_2)\text{ and }A_2\subseteq\mathcal{N}_r(A_1)\}.
\]

\begin{Definition}[Quasi-isometric Embedding and Quasi-isometry]
A map $f:(X,d_X)\to(Y,d_Y)$ is a \textbf{$(K,C)$-quasi-isometric embedding} ($K\ge1,C\ge0$) if for all $x,x'\in X$,
\[
\frac{1}{K}d_X(x,x')-C\le d_Y(f(x),f(x'))\le Kd_X(x,x')+C.
\]
If moreover $\mathcal{N}_C(f(X))=Y$, then $f$ is a \textbf{$(K,C)$-quasi-isometry}.
\end{Definition}

\begin{Definition}\label{def:morse}
A geodesic $\alpha\subseteq X$ is \textbf{Morse} if there exists $N:\mathbb{R}_{\ge1}\times\mathbb{R}_{\ge0}\to\mathbb{R}_{\ge0}$ such that every $(K,C)$-quasi-geodesic with endpoints on $\alpha$ lies in $\mathcal{N}_{N(K,C)}(\alpha)$. We call $N$ a \textbf{Morse gauge} and say $\alpha$ is \textbf{$N$-Morse}.
\end{Definition}

For $x,y\in X$ write $[x,y]$ for a geodesic segment joining them. If $\alpha$ is a geodesic and $x,y\in\alpha$, denote by $[x,y]_\alpha$ the oriented subsegment (orientation from $x$ to $y$ which may be consistent or inconsistent with that of $\alpha$ itself). Concatenations are written $[x,y]_\alpha\cup[y,z]$. For a bi-infinite Morse geodesic $\alpha$, let $\alpha_-,\alpha_+\in\partial_*X$ be its endpoints; for a Morse ray $\sigma$, write $\sigma_+$ for its boundary point.
In general, geodesics between two points are not unique. For subsequent treatments of $[x,y]_{\alpha}\cup [y,z]$, the segment $[y,z]$ is typically pre-determined by the context rather than being arbitrary. Furthermore, we consider concatenations of geodesic segments and geodesic rays, thus allowing the two endpoints of the resulting concatenated path to lie on the Morse boundary $\partial_* X$. 

We state below several elementary results on Morse geodesics that will be invoked in the subsequent arguments.

\begin{Lemma}\label{Lemma:SubpathMorse}\cite{Liu21}
For any Morse gauge $N$, there exists a Morse gauge $N'$(depending only on $N$) such that every subpath of an $N$-Morse geodesic is $N'$-Morse.
\end{Lemma}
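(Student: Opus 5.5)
Let $\alpha$ be an $N$-Morse geodesic and let $\beta=[x,y]_\alpha$ be a subpath with $x,y\in\alpha$. Fix $K\ge1$ and $C\ge0$, and let $\gamma$ be a $(K,C)$-quasi-geodesic with endpoints $p,q\in\beta$. The aim is to bound the distance from $\gamma$ to $\beta$, rather than to $\alpha$. Since $p,q\in\alpha$, the definition of Morse already gives $\gamma\subseteq\mathcal{N}_{N(K,C)}(\alpha)$. So the only task is to rule out points of $\gamma$ that are close to $\alpha$ only near parts of $\alpha$ outside $\beta$.

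We may assume $\beta=[p,q]_\alpha$ itself. Indeed, $[p,q]_\alpha\subseteq\beta$, so a bound to $[p,q]_\alpha$ also bounds the distance to $\beta$. Take $z\in\gamma$ and a point $w\in\alpha$ with $d(z,w)\le N(K,C)$, and suppose $w$ lies beyond $q$, that is, on the component of $\alpha\setminus[p,q]_\alpha$ past $q$. Let $\gamma_1$ be the subpath of $\gamma$ from $z$ to $q$. Since $\gamma$ is a $(K,C)$-quasi-geodesic, every point of $\gamma_1$ lies within $N(K,C)$ of $\alpha$. The set of points of $\alpha$ that are $N(K,C)$-close to $\gamma_1$ is coarsely connected: consecutive points of $\gamma_1$ at distance about $1$ have near-points on $\alpha$ at distance at most $2N(K,C)+K+C$. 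These near-points run from near $w$ to $q$.

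Now apply the same reasoning to the whole path $\gamma$ from $p$ through $z$ to $q$. Its nearest-point shadow on $\alpha$ starts at $p$, reaches $w$ beyond $q$, and returns to $q$. Let $D=d(w,q)$. Then $\gamma$ contains a point $z'$ whose shadow lies within $2N(K,C)+K+C$ of $q$, before the shadow reaches $w$, and it also passes near $q$ at its endpoint. So $\gamma$ makes an excursion from near $q$ out to near $w$ and back. Because $\alpha$ is a geodesic, the two visits near $q$ are within $2N(K,C)+K+C$ of each other. The quasi-geodesic inequality then bounds the parameter length between these visits, and hence bounds the length of the excursion. This gives $D\lesssim K\bigl(K(2(2N+K+C))+C\bigr)+2N$, up to explicit constants depending only on $K$, $C$ and $N(K,C)$. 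Therefore $d(z,q)\le D+N(K,C)$, and the same argument handles the side past $p$.

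Set $N'(K,C)$ equal to the resulting bound. It depends only on $N$, so every subpath of an $N$-Morse geodesic is $N'$-Morse. The main obstacle is the coarse-connectedness bookkeeping: one has to show rigorously that the shadow of $\gamma$ must pass near $q$ both before and after reaching $w$. I would handle this by discretizing $\gamma$ at parameter steps of $1$ and using that $\alpha\setminus\{q\}$ splits into two sides.
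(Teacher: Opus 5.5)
The paper gives no proof of this lemma; it is quoted from \cite{Liu21}, so there is no in-paper argument to compare yours with. Your argument is correct and is the standard one.

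Write $\gamma\colon[a,b]\to X$ with $\gamma(a)=p$ and $\gamma(b)=q$. Since $\gamma\subseteq\mathcal{N}_{N(K,C)}(\alpha)$, the only issue is a point $z=\gamma(t)$ whose near-point $w$ lies past $q$. Your discretized shadow argument produces an earlier parameter $t'\le t$ with $\gamma(t')$ close to $q$. The quasi-geodesic inequality then confines $\gamma([t',b])$ to a bounded neighbourhood of $q=\gamma(b)$.

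Two small bookkeeping corrections:

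\begin{enumerate}
\item Only the crossing \emph{before} $t$ needs the discretization. The ``return'' to $q$ is automatic, since $q$ is the endpoint of $\gamma$. Choosing the shadow of $\gamma(a)=p$ to be $p$ itself makes the starting side unambiguous.
\item Your constant is slightly off. The crossing step bounds the \emph{shadow}: $d(\pi(t'),q)\le 2N+K+C$. This holds because $\alpha$ is a geodesic, so a step whose shadows straddle $q$ has both shadows within that step's length of $q$. For the point $\gamma(t')$ itself you must add $d(\gamma(t'),\pi(t'))\le N$, giving $d(\gamma(t'),q)\le 3N+K+C$. Hence $b-t'\le K(3N+K+2C)$, and
\[
d(z,\beta)\le d(z,q)\le K^2\bigl(3N(K,C)+K+2C\bigr)+C=:N'(K,C).
\]
This also dominates the trivial bound $N(K,C)$ in the case $w\in[p,q]_\alpha$, so it serves as the gauge in all cases.
\end{enumerate}

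Discretizing, rather than first taming $\gamma$ into a continuous quasi-geodesic and arguing by connectedness (in the spirit of Lemma~2.1 of \cite{Cordes17}), is a good choice. The paper's definition of Morse allows quasi-geodesics that need not be continuous, and your step size of $1$ costs only the additive $K+C$.
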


\begin{Lemma}\label{30QI}
Let $p,q,r\in X\cup\partial_*X$, $o\in X$, and suppose $\alpha_1=[r,p]$ is $N$-Morse. Take $o_1\in\alpha_1$ with $d(o,o_1)\le D$ and let $o_2\in\alpha_2=[p,q]$ be a point closest to $o$. Then:
\begin{enumerate}
\item The concatenation$[o,o_2]\cup[o_2,p]_{\alpha_2}$ is a $(3,0)$-quasi-geodesic.
\item There exists $x\in\alpha_1$ and a constant $C=C(N,D)$ such that $d(x,o_2)\le C$..
\end{enumerate}
\end{Lemma}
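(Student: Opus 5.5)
For part (1), I would use the closest-point property of $o_2$ together with the triangle inequality. Write $\gamma=[o,o_2]\cup[o_2,p]_{\alpha_2}$. Take $u$ on $\gamma$, with $u\in[o,o_2]$, and $v\in[o_2,p]_{\alpha_2}$. The length of $\gamma$ between them is $\ell=d(u,o_2)+d(o_2,v)$, and I want to show $\ell\le 3\,d(u,v)$.

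Because $o_2$ is a point of $\alpha_2$ closest to $o$, and $u$ lies on a geodesic from $o$ to $o_2$, the point $o_2$ is also a closest point of $\alpha_2$ to $u$. Indeed, if some $w\in\alpha_2$ had $d(u,w)<d(u,o_2)$, then $d(o,w)\le d(o,u)+d(u,w)<d(o,o_2)$, which is impossible. Hence
\[
d(u,o_2)\le d(u,v),
\]
and so
\[
d(o_2,v)\le d(o_2,u)+d(u,v)\le 2d(u,v).
\]
Adding these gives $\ell\le 3d(u,v)$. If $u,v$ lie on the same piece, $\gamma$ is geodesic there. Since $\ell\ge d(u,v)$ trivially, $\gamma$ is $(3,0)$-quasi-geodesic when parametrized by arc length. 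If $p\in\partial_*X$, the same argument applies on every finite subpath.

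For part (2), the plan is to use the Morse property of $\alpha_1$ on a quasi-geodesic with endpoints near $\alpha_1$. I would form
\[
\beta=[o_1,o]\cup[o,o_2]\cup[o_2,p]_{\alpha_2}.
\]
By part (1) and the bound $d(o,o_1)\le D$, $\beta$ is a $(3,C_0)$-quasi-geodesic with $C_0=C_0(D)$. Prepending a segment of length at most $D$ to a $(3,0)$-quasi-geodesic costs only an additive constant of order $D$. The path $\beta$ runs from $o_1\in\alpha_1$ to $p$, an endpoint of $\alpha_1$.

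Let $N'$ be the Morse gauge for subpaths from Lemma~\ref{Lemma:SubpathMorse}, so that $[o_1,p]_{\alpha_1}$ is $N'$-Morse. If $p\in X$, then $\beta$ has both endpoints on $\alpha_1$ and lies in $\mathcal N_{N'(3,C_0)}(\alpha_1)$. In particular, there is $x\in\alpha_1$ with $d(x,o_2)\le N'(3,C_0)=:C$.

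The main obstacle is the case $p\in\partial_*X$, where $\beta$ has an endpoint at infinity and the Morse definition does not apply directly. I would handle it by truncation.

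1. Since $\beta$ and $\alpha_1$ both converge to $p$, and $[o_2,p]_{\alpha_2}$ lies at bounded Hausdorff distance from a subray of $\alpha_1$, choose points $p_n\in[o_2,p]_{\alpha_2}$ going to infinity.
2. For each $n$, choose $q_n\in\alpha_1$ with $d(p_n,q_n)$ bounded by a constant depending only on $N'$. This uses the standard fact that asymptotic rays, one of them $N'$-Morse, eventually lie in a uniformly bounded neighbourhood of each other, with the bound depending only on the gauge.
3. Apply the finite case to $\beta$ truncated at $p_n$ and extended by $[p_n,q_n]$. This is again a $(3,C_0')$-quasi-geodesic with endpoints on $\alpha_1$.
4. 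Conclude $d(o_2,\alpha_1)\le N'(3,C_0')$, with $C_0'$ depending only on $N$ and $D$.

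Working through the uniform asymptotic constant in step 2 cleanly is the step I would expect to require the most care.
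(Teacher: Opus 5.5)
Your proposal is correct and follows essentially the paper's proof. Part (1) is the same closest-point triangle-inequality estimate. Part (2) applies the Morse property of $\alpha_1$ to $[o_1,o]\cup[o,o_2]\cup[o_2,p]_{\alpha_2}$, and when $p\in\partial_*X$ it truncates at a point of $[o_2,p]_{\alpha_2}$ lying within a gauge-dependent distance of $\alpha_1$; the paper takes exactly this uniform asymptotic constant from the proof of Proposition~2.4 in \cite{Cordes17}.
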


    \begin{proof}
    We first verify that the concatenation $[o, o_2]\cup[o_2, p]_{\alpha_2}$ constitutes a $(3, 0)$-quasi-geodesic. Since $[o, o_2]$ and $[o_2, p]_{\alpha_2}$ are geodesics, it suffices to check the quasi-geodesic inequality for arbitrary points $y_1\in [o, o_2]$ and $y_2\in [o_2, p]_{\alpha_2}$. 
    Since $o_2\in\alpha_2$ is the closest point to $o$, it follows that $d(o_2, y_1)\le d(y_1, y_2)$. 
    Applying the triangle inequality, we obtain 
    \[d(y_1, y_2)\le d(y_1, o_2)+d(o_2, y_2)\le d(y_1, y_2)+(d(y_2, y_1)+d(y_1, o_2))\le 3d(y_1, y_2).\] 
    This completes the proof of the inequality. 
    
    Next, we consider two cases based on whether $p\in X$ or $p\in\partial_*X$. Suppose that the point $p\in X$. Define the concatenation $\eta_1=[o_1, o]\cup[o, o_2]\cup[o_2, p]_{\alpha_2}$. It is a $(3, D)$-quasi-geodesic. Since the geodesic $\alpha_1$ is $N$-Morse, then 
    $o_2\in \eta_1\subset \mathcal{N}_{N(3,D)}(\alpha_1),$
    In particular, there exists a point $x\in \alpha_1$ such that $d(x, o_2)\le N(3, D).$

        \begin{figure}[!ht]\label{30}
    \centering
\labellist
  \pinlabel $o$ at 9 44
   \pinlabel $o_1$ at 10 29
    \pinlabel $r$ at -5 30
   \pinlabel $q$ at 50 110
   \pinlabel $p$ at 92 0
   \pinlabel $o_2$ at 65 60
   \pinlabel $x$ at 48 29
   \pinlabel $p_1$ at 69 18
    \pinlabel $p_2$ at 85 27
    \pinlabel $\alpha_2$ at 78 35
    \pinlabel $\alpha_1$ at 56 20

   \endlabellist 
    \includegraphics[width=0.3\linewidth]{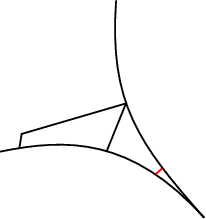}
    \caption{Caption}
    \label{fig1}
\end{figure}
    
    Suppose that the point $p\in \partial_*X$. By assumption, geodesics $\alpha_1$ and $\alpha_2$ are asymptotic to the same point $p$ on the boundary. From the proof of Proposition~2.4 in \cite{Cordes17}, 
    there exists a constant $D_0>0$ depending only on $N$ such that there exist $p_1\in [o_1, p]_{\alpha_1}$ and $p_2\in [o_2, p]_{\alpha_2}$ with $d(p_1, p_2)\le D_0$ (see Figure \ref{fig1}) .
    Consider the concatenation $\eta_2=[o_1, o]\cup[o, o_2]\cup[o_2, p_2]_{\alpha_2}\cup[p_2, p_1]$. It forms a $(3, D_0+D)$-quasi-geodesic. 
    Since the geodesic $\alpha_1$ is $N$-Morse, the concatenation $\eta_2$ lies in the $N(3, D_0+D)$-neighborhood of $\alpha_1$. Thus, there exists $x\in \alpha_2$ such that $d(x, o_2)\le N(3, D_0+D)$. 
    Finally, set $C=\max \{N(3, D_0+D), N(3, D)\}$. The constant $C$ depends only on $N$ and $D$.

    \end{proof}

A geodesic triangle is called \textbf{$\delta$-slim} if each side lies in the $\delta$-neighborhood of the union of the other two sides.

The following lemma, which combines Lemma~2.3 and Lemma~2.4 in \cite{CharneyCordesMurray19}, extends the original formulation (given for vertices in $X$ in \cite{Cordes17}) to the setting where vertices may lie on the Morse boundary.  
 
\begin{Lemma}[Slim triangles and Morse triangles]\label{slim-M}
Let $\triangle(p, q, r)$ be a triangle with vertices $p, q, r\in X\cup\partial_*X$. Suppose that any two sides of $\triangle(p,q,r)$ are $N$-Morse. Then there exists a constant $\delta_N$ and a Morse gauge $N'$, both depending only on $N$, such that the third side is $N'$-Morse and the triangle $\triangle(p,q,r)$ is $\delta_N$-slim.
\end{Lemma}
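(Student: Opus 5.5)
The plan is to prove the two conclusions of Lemma~\ref{slim-M} in order. First we show that the third side is Morse with a gauge depending only on $N$. Then we deduce slimness from the Morse property of all three sides.

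\textbf{Step 1: vertices in $X$.} Label the sides so that $\alpha=[p,q]$ and $\beta=[q,r]$ are $N$-Morse, and $\gamma=[p,r]$ is the third side. Let $m\in\gamma$ be a point with $d(m,\alpha\cup\beta)$ maximal. The concatenation $\alpha\cup\beta$ has endpoints $p,r$, and we want to show it is a quasi-geodesic with constants depending only on $N$, up to backtracking near $q$.
\begin{itemize}
\item Let $q_\alpha\in\alpha$ and $q_\beta\in\beta$ be the points where the projection of $\gamma$ switches from near $\alpha$ to near $\beta$. Apply Lemma~\ref{30QI} with $o$ a point of $\gamma$ to produce a point of $\alpha$ or $\beta$ within $C(N,D)$ of the closest-point projections.
\item Run the standard "thin triangle for Morse sides" argument: $\gamma$ stays within $N(3,0)$ of $\alpha\cup\beta$ except near a single transition point. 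There we build a $(3,C)$-quasi-geodesic as in Lemma~\ref{30QI}(1): go along $\alpha$ to the projection, cross over, then follow $\beta$.
\item Use the Morse property of $\alpha$ and $\beta$ to bound the crossing length by a constant $\delta_N$.
\end{itemize}
This gives $\delta_N$-slimness. For the Morse property of $\gamma$, take any $(K,C)$-quasi-geodesic with endpoints on $\gamma$. Replace its endpoints by nearby points of $\alpha\cup\beta$, and split it at a point near the transition so that each piece fellow-travels $\alpha$ or $\beta$. Since $\gamma\subset\mathcal N_{\delta_N}(\alpha\cup\beta)$, and conversely outside a bounded region, the quasi-geodesic lies in $\mathcal N_{N'(K,C)}(\gamma)$. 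Lemma~\ref{Lemma:SubpathMorse} handles the subsegments of $\alpha$ and $\beta$ that are used.

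\textbf{Step 2: ideal vertices.} For vertices in $\partial_*X$, approximate by finite triangles. Choose points $p_n\to p$ along the rays, and use the asymptotic fellow-traveling from the proof of Proposition~2.4 in \cite{Cordes17} (the constant $D_0(N)$ already used in Lemma~\ref{30QI}). This makes the finite triangles have $N''$-Morse sides with $N''$ depending only on $N$. Apply Step~1 to each finite triangle. Pass to the limit by Arzel\`a--Ascoli in the proper space $X$; the bounds are uniform, so they survive the limit, at the cost of enlarging $\delta_N$ by $2D_0$.

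\textbf{Main obstacle.} The hardest part is the uniform control of the transition region in Step~1, that is, bounding the crossing length near $q$ by a constant depending only on $N$. I would handle it using Lemma~\ref{30QI}(1) on the projection of the transition point, combined with the Morse gauge bound $N(3,\cdot)$. Alternatively, since this lemma is exactly Lemmas~2.3--2.4 of \cite{CharneyCordesMurray19}, the proof may simply cite them. The sketch above indicates how one would verify it directly.
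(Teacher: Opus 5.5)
The paper gives no argument for this lemma: it quotes Lemmas~2.3 and~2.4 of \cite{CharneyCordesMurray19}. Your closing fallback of citing them is therefore exactly the paper's route. Your direct sketch, however, has a genuine gap at the step you yourself flag as the main obstacle.

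In Step~1 the path you build (along $\alpha$, cross over, follow $\beta$) runs from $p$ to $r$. Its endpoints therefore lie on $\gamma$, the side not yet known to be Morse, so no Morse gauge can be applied to it. The transition points $q_\alpha,q_\beta$, and your use of Lemma~\ref{30QI}(2), both presuppose that points of $\gamma$ are already close to $\alpha\cup\beta$, which is what is being proved. Also, $\alpha\cup\beta$ is not a quasi-geodesic up to bounded backtracking: it backtracks along the whole leg toward $q$, whose length is roughly $(p\cdot r)_q$ and is unbounded.

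The missing idea is to project the common vertex $q$ of the two Morse sides onto the third side. If $q'\in\gamma$ is a closest point to $q$, Lemma~\ref{30QI}(1) makes $[p,q']_\gamma\cup[q',q]$ and $[q,q']\cup[q',r]_\gamma$ into $(3,0)$-quasi-geodesics with endpoints on $\alpha$ and on $\beta$, respectively. This gives $\gamma\subset\mathcal{N}_{N(3,0)}(\alpha\cup\beta)$ with no exceptional region, and also $[q,q']\subset\mathcal{N}_{N(3,0)}(\alpha)\cap\mathcal{N}_{N(3,0)}(\beta)$. Slimness further requires $\alpha\subset\mathcal{N}_\delta(\beta\cup\gamma)$ and $\beta\subset\mathcal{N}_\delta(\alpha\cup\gamma)$, which your sketch never addresses. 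These follow from Lemma~2.1 of \cite{Cordes17}, which bounds by $2N(3,0)$ the Hausdorff distance between $\alpha$ and the continuous quasi-geodesic $[p,q']_\gamma\cup[q',q]$, together with the inclusion $[q,q']\subset\mathcal{N}_{N(3,0)}(\beta)$.

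For the Morse property of $\gamma$, ``split it at a point near the transition'' is the whole difficulty. Nothing you wrote shows that a $(K,C)$-quasi-geodesic $\phi$ from $x\in[p,q']_\gamma$ to $y\in[q',r]_\gamma$ comes boundedly close to $q'$. The supporting claim ``conversely outside a bounded region'' is false: the portions of $\alpha$ and $\beta$ that fellow-travel $[q',q]$ are far from $\gamma$ and arbitrarily long.

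One way to close this gap is as follows. Let $w\in\phi$ be a closest point to $q$. Then $[q,w]\cup\phi|_{[w,x]}$ and $[q,w]\cup\phi|_{[w,y]}$ are quasi-geodesics with constants depending only on $K,C$. After a jump of length at most $N(3,0)$ at $x$, resp.\ $y$, they have endpoints on $\alpha$, resp.\ $\beta$. Hence they are uniformly Hausdorff close to $[q,q']\cup[q',x]_\gamma$ and $[q,q']\cup[q',y]_\gamma$. Comparing the two, and using the quasi-geodesic inequality when $\phi$ passes near $q'$ on both sides of $w$, bounds $d(w,q')$ in terms of $N,K,C$. After that, each half of $\phi$ stays uniformly near $\gamma$.

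Finally, in Step~2 an Arzel\`a--Ascoli limit of finite third sides produces \emph{some} geodesic between the ideal endpoints, not the given side $\gamma$. Instead, truncate the given $\gamma$ itself and exhaust it by the resulting finite subsegments; this needs no limiting argument.
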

    
For any $K>0$, every geodesic of length at most $K$ is $N$-Morse for some $N=N(K)$. Consequently, when invoking Lemma \ref{slim-M}, we may include triangles where one or more sides have bounded length.

\subsection{Boundaries of hyperbolic spaces}
We briefly recall the construction of the Gromov boundary and visual metrics for it; detailed accounts can be found in \cite{BridsonHaefliger99, Ghys90}.

\begin{Definition}
    Let $(X, d)$ be a metric space and $x, y,z\in X$. The Gromov product of $x$ and $y$ with respect to $z$ is defined by 
    $$(x\cdot y)_z=\frac{1}{2}(d(x,z)+d(y,z)-d(x,y))$$
\end{Definition}

  In the following definition, the space $X$ is not necessarily geodesic.

\begin{Definition}\label{GHyper}
    Let $(X,d)$ be a metric space and $\delta\ge0$ be a constant. We say $X$ is $\delta$-hyperbolic if for all $x,y,z,w\in X$ we have 
    $$(x\cdot y)_w\ge \min\{(x\cdot z)_w, (y\cdot z)_w \}-\delta.$$
\end{Definition}

Let $o\in X$ be a basepoint. A sequence $(x_n)$ in $X$ converges at infinity if $(x_i\cdot x_i)_0\to \infty$ as $i,j\to \infty$. Two convergent sequences $(x_n)$ and $(y_n)$ in $X$ are said to be equivalent if $(x_i\cdot y_j)_o\to \infty$ as $i,j\to \infty$. The equivalence class $(x_n)$ is denoted $\lim x_n$. The sequential boundary $\partial_sX$ is defined to be the set of equivalence classes. In our paper, we will omit the subscript $s$. 
Let $x,y\in \partial X$. We extend the Gromov product to $\partial X$ by :
$$
(x\cdot y)_o=\sup\displaystyle{\liminf_{i,j\to\infty}}(x_i\cdot y_j)_o,
$$
where the supremum is taken over all sequences $(x_i), (y_j)$ in $X$ with $x=\lim x_i,y=\lim y_j.$

\begin{Definition}
  Let $X$ be a hyperbolic space with a basepoint $o$. A metric $d$ on Gromov boundary $\partial X$ is called a visual metric with parameter $\epsilon>0$ if there exist constants $k_1,k_2>0$ such that for all $p,q\in \partial X$,
  $$k_1\exp{(-\epsilon(p\cdot q)_0)}\le d(p,q)\le k_2\exp{(-\epsilon(p\cdot q)_0)}.$$

\end{Definition}
Here is the standard construction of metrics on $\partial X$. Let $X$ be a $\delta$-hyperbolic space with a basepoint $o$. Let $p, q\in \partial X, \epsilon>0$. Let us consider the following:

$$d_{o,\epsilon}(p,q)=\inf\sum_{i=1}^n\exp{(-\epsilon(p_{i-1}\cdot p_i)_0)},$$
 where the infimum is taken over all finite sequences $p=p_1, p_2, \dots, p_n=q$ in $\partial X$,  no bound on $n$.

\begin{Proposition}\label{VisMetric}

    Let $X$ be a $\delta$-hyperbolic space with a basepoint $o$. Let $\epsilon>0$ be a constant satisfying $4\delta\epsilon\le\ln{2}.$ Then $d_{o,\epsilon }$ is a visual metric on $\partial X$.  For all $p,q\in \partial X$, we have 
    $$(3-2\exp{(2\delta\epsilon)})\exp{(-\epsilon(p\cdot q)_o)}\le d_{o,\epsilon}(p,q)\le \exp{(-\epsilon(p\cdot q)_o)}$$
\end{Proposition}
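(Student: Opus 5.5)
The plan is the classical chain-metric argument, as in Bridson--Haefliger III.H.3 and Ghys--de la Harpe. The upper bound $d_{o,\epsilon}(p,q)\le \exp(-\epsilon(p\cdot q)_o)$ is immediate: take the trivial chain $p=p_1,p_2=q$ in the definition of the infimum. Positivity and the remaining metric axioms (symmetry, triangle inequality via concatenation of chains) are formal. Once the lower bound is in place, $d_{o,\epsilon}(p,q)>0$ for $p\ne q$, because $(p\cdot q)_o<\infty$ exactly when $p\neq q$. So everything reduces to the lower bound.

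Set $\rho(p,q)=\exp(-\epsilon(p\cdot q)_o)$ and $\epsilon'=\exp(2\delta\epsilon)-1$. The hypothesis $4\delta\epsilon\le\ln 2$ gives $\exp(2\delta\epsilon)\le\sqrt2$, hence $\epsilon'\le\sqrt2-1$.

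The first step is to extend the four-point condition of Definition~\ref{GHyper} to boundary points with a loss. Following the standard argument (taking sequences and $\liminf$s, using the sup in the definition of the boundary product), one shows
\[(p\cdot q)_o\ge\min\{(p\cdot r)_o,(q\cdot r)_o\}-2\delta\]
for $p,q,r\in\partial X$. Exponentiating gives
\[\rho(p,r)\le e^{2\delta\epsilon}\max\{\rho(p,q),\rho(q,r)\}=(1+\epsilon')\max\{\rho(p,q),\rho(q,r)\}.\]

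The second step is the key inequality: for every chain $p=p_1,\dots,p_n=q$,
\[\rho(p_1,p_n)\le(1+2\epsilon')\sum_{i=1}^{n-1}\rho(p_i,p_{i+1}).\]
I will prove this by induction on $n$; the case $n=2$ is trivial. Let $S$ be the sum. If $\rho(p_1,p_n)\le S$ we are done, so assume otherwise. Choose the largest index $m$ such that $\sum_{i<m}\rho(p_i,p_{i+1})\le S/2$. Then the tail sum starting at $m+1$ is also at most $S/2$. By induction, $\rho(p_1,p_m)\le(1+2\epsilon')S/2$ and $\rho(p_{m+1},p_n)\le(1+2\epsilon')S/2$, while trivially $\rho(p_m,p_{m+1})\le S$.

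Applying the quasi-ultrametric inequality twice gives
\[\rho(p_1,p_n)\le(1+\epsilon')^2\max\{\rho(p_1,p_m),\rho(p_m,p_{m+1}),\rho(p_{m+1},p_n)\}.\]
This bounds $\rho(p_1,p_n)$ by $(1+\epsilon')^2\max\{(1+2\epsilon')/2,\,1\}\,S$. The constant has to be checked to close up, namely $(1+\epsilon')^2\le 1+2\epsilon'$ in the relevant branch. That fails as stated, so the bookkeeping must be done more carefully: use the middle term's own bound and run the induction through the precise BH argument. This constant-chasing, with $\epsilon'\le\sqrt2-1$, is the main obstacle. I will follow the BH proof of III.H.3.21 verbatim, adapted to the $2\delta$ loss.

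The final step is to take the infimum over chains. This gives
\[d_{o,\epsilon}(p,q)\ge(1+2\epsilon')^{-1}\rho(p,q)\ge(1-2\epsilon')\rho(p,q),\]
and $1-2\epsilon'=3-2\exp(2\delta\epsilon)$, which is positive since $\epsilon'\le\sqrt2-1<1/2$. This yields the stated bounds with $k_1=3-2e^{2\delta\epsilon}$ and $k_2=1$, so $d_{o,\epsilon}$ is a visual metric.
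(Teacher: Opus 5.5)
The paper does not prove this proposition; it is quoted as background from Bridson--Haefliger and Ghys--de la Harpe. Your outline follows the standard argument: the trivial chain for the upper bound, a $2\delta$-quasi-ultrametric inequality on $\partial X$, and induction on chain length with a split at the half-sum index. There is, however, a genuine gap in the central step. The inequality you try to prove by induction, $\rho(p_1,p_n)\le(1+2\epsilon')\sum_i\rho(p_i,p_{i+1})$, cannot be derived from the quasi-ultrametric inequality alone, so more careful bookkeeping will not close it. To see this, put $K=1+\epsilon'$ and take four points with $\rho(x_0,x_1)=\rho(x_2,x_3)=\eta\le 1$, $\rho(x_1,x_2)=1$, $\rho(x_0,x_2)=\rho(x_1,x_3)=K$ and $\rho(x_0,x_3)=K^2$. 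A direct check shows $\rho(x,z)\le K\max\{\rho(x,y),\rho(y,z)\}$ for every triple. Yet $\rho(x_0,x_3)/(1+2\eta)\to K^2=1+2\epsilon'+\epsilon'^2>1+2\epsilon'$ as $\eta\to0$. This is exactly the middle-term branch you saw fail, and it is unavoidable. Deferring to the Bridson--Haefliger proof ``verbatim'' does not rescue your stated claim, because the standard induction proves a strictly weaker inequality.

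The repair is to change the induction hypothesis to $(1-2\epsilon')\rho(p_1,p_n)\le S$, that is, to use the constant $(1-2\epsilon')^{-1}>1+2\epsilon'$. With your splitting index $m$, the induction hypothesis gives $\rho(p_1,p_m)\le S/(2(1-2\epsilon'))$ and $\rho(p_{m+1},p_n)\le S/(2(1-2\epsilon'))$, and trivially $\rho(p_m,p_{m+1})\le S$. Hence
\[
\rho(p_1,p_n)\le(1+\epsilon')^2\max\Bigl\{\frac{S}{2(1-2\epsilon')},\,S\Bigr\}.
\]
The outer branch closes because $(1+\epsilon')^2\le 2$, which is exactly where $4\delta\epsilon\le\ln2$ is used. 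The middle branch closes because $(1+\epsilon')^2(1-2\epsilon')=1-3\epsilon'^2-2\epsilon'^3\le1$. Taking the infimum over chains then gives $d_{o,\epsilon}\ge(1-2\epsilon')\rho=(3-2e^{2\delta\epsilon})\rho$ directly, with no detour through $(1+2\epsilon')^{-1}$.

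One smaller point concerns the boundary inequality. The exact loss of $2\delta$ matters: with a loss of $3\delta$, the hypothesis $4\delta\epsilon\le\ln 2$ would give neither the stated constant nor $\epsilon'<1/2$. The naive argument, taking arbitrary representing sequences and passing to $\liminf$, only yields $3\delta$. To get $2\delta$, choose near-optimal sequences for $(p\cdot r)_o$ and for $(q\cdot r)_o$ separately. Then use that replacing one representing sequence of $r$ by another changes the $\liminf$ of the Gromov products by at most $\delta$, since the four-point condition is applied once.
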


\begin{Remark}
    The set of all metrics of the form $d_{o,\epsilon}$ is denoted by $\mathcal{G}(X)$ on $\partial X$. Two such metrics $(\partial X, d_{o_1,\epsilon_1})$ and $(\partial X, d_{o_2,\epsilon_2})$ are equivalent if there exists a constant $k>0$ such that $$k^{-1}d_{o_1,\epsilon_1}^{\epsilon_2}\le d_{o_2,\epsilon_2}^{\epsilon_1}\le k d_{o_1,\epsilon_1}^{\epsilon_2}.$$
    Proposition~\ref{VisMetric} tells us that if $o_1,o_2\in X$, $\epsilon_1,\epsilon_2>0$ and $4\delta\epsilon_1,4\delta\epsilon_2\le \ln{2}$, then $(\partial X, d_{o_1,\epsilon_1})$ and $(\partial X, d_{o_2, \epsilon_2})$ are equivalent. The topology on $\partial X$ induced by $d_{o,\epsilon}$ does not depend on the choices of basepoint $0$ and visual parameter $\epsilon$.
\end{Remark}

\subsection{The Morse boundary and Metric Morse boundary}
We first define the \textbf{Morse boundary} for a proper geodesic metric space \( X \). Fix a basepoint \( o \in X \). 
As a set, the Morse boundary \( \partial_*X_o \) consists of the equivalence classes of all Morse geodesic rays emanating from the basepoint \( o \). Here, two geodesic rays are deemed equivalent if their Hausdorff distance is bounded.

For a fixed Morse gauge \( N \), we introduce the following subset:
\[
\partial_*^N X_o = \left\{ [\alpha] \mid \text{there exists an } N\text{-Morse geodesic ray } \beta \in [\alpha] \text{ with basepoint } o \right\}.
\]

Topological structures can be endowed on both \( \partial_*^N X_o \) and \( \partial_*X_o \). However, such topological considerations will be omitted in the present framework. For further details, we refer the reader to \cite{Cordes17}.

Now let us focus on the \textbf{Metric Morse boundary}. Fix a Morse gauge \(N\) and $o\in X$, and define \(X_o^{(N)}\) as the set of points \(x\in X\) for which there exists an \(N\)-Morse geodesic segment \([o, x]\) in \(X\).  
This metric space is not necessarily geodesic. Nevertheless, \(X_o^{(N)}\) is a \(32N(3,0)\)-hyperbolic space in the sense of Definition~\ref{GHyper}. We denote \(\partial X^{(N)}_o\) as the sequential boundary of \(X_o^{(N)}\); for detailed background on Metric Morse boundary, we refer the reader to \cite{CordesHume17}.  

It is necessary to distinguish carefully and distinctly between the two boundaries \(\partial X^{(N)}_{o}\) and \(\partial_*^NX_{o}\) associated with a proper geodesic metric space $X$ and a Morse gauge $N$. Each has its own advantages and limitations, and the choice depends on context. On one hand, \(\partial_*^NX_{o}\) is defined via actual geodesic rays, providing clear geometric intuition; however, it is typically non-metrizable. On the other hand, \(\partial X^{(N)}_{o}\) agrees with the Gromov boundary of a hyperbolic space, even though \(X^{(N)}_{o}\) itself may be neither proper nor geodesic. This discrepancy, however, does not present an obstacle. By applying the following theorem and comparing the two boundaries systematically, we can leverage their complementary strengths.

\begin{Theorem}[\cite{CordesHume17,Liu22}]\label{relation}
    For any Morse gauge $N$, there exists a Morse gauge $N'$ depending only on $N$ such that the following holds.
    Let $X$ be a proper geodesic space with basepoint $o$. There exist two natural embeddings:
    \[
    i_N:\partial X^{(N)}_{o}\to\partial_*^{N'}X_{o} \quad \text{and} \quad j_N:\partial_*^{N}X_{o}\to \partial X^{(N')}_{o}.
    \]
\end{Theorem}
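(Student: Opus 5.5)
The plan is to construct both maps directly from geodesics issuing from $o$, using Lemma~\ref{Lemma:SubpathMorse} and Lemma~\ref{slim-M} for uniform control. Properness of $X$ supplies limits of geodesics via Arzel\`a--Ascoli. Throughout, $N'$ denotes a gauge depending only on $N$, enlarged finitely many times as needed.

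\emph{The map $j_N$.} Let $[\alpha]\in\partial_*^NX_o$ be represented by an $N$-Morse ray $\alpha$ with $\alpha(0)=o$.
\begin{enumerate}
\item By Lemma~\ref{Lemma:SubpathMorse}, each segment $[o,\alpha(n)]_\alpha$ is $N'$-Morse, so $\alpha(n)\in X_o^{(N')}$.
\item Since $\alpha$ is a geodesic, $(\alpha(n)\cdot\alpha(m))_o=\min\{n,m\}\to\infty$. Hence $(\alpha(n))$ converges at infinity in $X_o^{(N')}$, and we set $j_N([\alpha])=\lim\alpha(n)$.
\item Well-definedness: two $N$-Morse rays from $o$ in the same class stay within a uniform distance $D_0=D_0(N)$ of each other (as in the proof of Lemma~\ref{30QI}, via \cite{Cordes17}). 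Therefore $(\alpha(n)\cdot\beta(m))_o\ge\min\{n,m\}-D_0\to\infty$, and the two sequences are equivalent.
\item Injectivity: suppose $(\alpha(n)\cdot\beta(m))_o\to\infty$. Apply Lemma~\ref{slim-M} to the triangle $\triangle(o,\alpha(n),\beta(m))$, whose two sides at $o$ are $N'$-Morse; it is $\delta_{N'}$-slim. In a $\delta$-slim triangle the Gromov product $(\alpha(n)\cdot\beta(m))_o$ equals, up to an error bounded by a multiple of $\delta$, the length of the initial stretch on which the two sides fellow travel. So $\alpha$ and $\beta$ fellow travel within a uniform constant on longer and longer initial segments. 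Hence $d_{\mathcal H}(\alpha,\beta)<\infty$ and $[\alpha]=[\beta]$.
\end{enumerate}

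\emph{The map $i_N$.} Let $(x_n)$ converge at infinity in $X_o^{(N)}$, and choose $N$-Morse geodesics $\gamma_n=[o,x_n]$.
\begin{enumerate}
\item Since $X$ is proper, Arzel\`a--Ascoli gives a subsequence converging uniformly on compacta to a geodesic ray $\gamma$ from $o$.
\item $\gamma$ is $N'$-Morse. A $(K,C)$-quasi-geodesic $q$ with endpoints on $\gamma$ is modified near its endpoints by short segments to a quasi-geodesic with endpoints on $\gamma_n$, for $n$ large; the constants stay controlled. This modified path lies in the $N(K,C')$-neighbourhood of $\gamma_n$. Passing to the limit on the compact region containing $q$ places $q$ in a controlled neighbourhood of $\gamma$.
\item Set $i_N(\lim x_n)=[\gamma]$. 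To see this is independent of choices, use that $(x_n\cdot x_m)_o\to\infty$. Lemma~\ref{slim-M} applied to $\triangle(o,x_n,x_m)$ shows that $\gamma_n$ and $\gamma_m$ stay within $O(\delta_N)$ of each other up to distance roughly $(x_n\cdot x_m)_o$ from $o$.
\item Consequently, any two subsequential limit rays, whether from the same sequence or from equivalent sequences $(x_n)\sim(y_n)$, lie within uniformly bounded Hausdorff distance of one another, so $[\gamma]$ is well defined.
\item Injectivity: suppose the limit rays $\gamma,\gamma'$ of $(x_n)$ and $(y_n)$ are at bounded Hausdorff distance. Then by the Cordes bound they are $D_0$-close. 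Hence $\gamma_n$ and $\gamma'_m$ fellow travel on arbitrarily long initial segments, which forces $(x_n\cdot y_m)_o\to\infty$ and $(x_n)\sim(y_n)$.
\end{enumerate}

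The main obstacle is the uniform dictionary between ``Gromov product $\ge R$'' and ``the two $N$-Morse geodesics from $o$ fellow travel, within a constant depending only on $N$, up to distance about $R$''. Both the injectivity arguments and the well-definedness of $i_N$ reduce to it. I would prove this once as a separate lemma from the $\delta_N$-slimness in Lemma~\ref{slim-M}, by locating the internal points of the triangle $\triangle(o,x,y)$ (points on the three sides mutually close to each other). The remaining care is to check that each constant and gauge introduced depends only on $N$, which yields the required $N'$.
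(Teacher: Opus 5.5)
Your argument is correct. The paper gives no proof of this theorem and only cites Cordes--Hume and Liu, and your route is essentially the standard one from those sources: $j_N$ sends an $N$-Morse ray $\alpha$ from $o$ to $\lim\alpha(n)$, $i_N$ takes an Arzel\`a--Ascoli limit of $N$-Morse segments $[o,x_n]$, and $\delta$-slimness of triangles $\triangle(o,x,y)$ with two Morse sides converts a large Gromov product into long fellow-travelling and back.

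Two small points remain. In the injectivity of $i_N$ you only get $(x_{n_k}\cdot y_{m_l})_o\to\infty$ along subsequences, and you pass to the full sequences using transitivity of equivalence in the hyperbolic space $X_o^{(N)}$. If ``embedding'' is read topologically, you also need continuity of both maps, which follows from the same dictionary; this paper explicitly sets topology aside and never uses it.
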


\begin{Remark}
    By virtue of Theorem~\ref{relation}, we may identify the points of $i_N(\partial X_o^{(N)})\subset \partial_*^{N'}X_o$ and $\partial X_o^{(N)}$, as well as those of $j_N(\partial_*^NX_o)\subset \partial X_o^{(N')}$ and $\partial_*^NX_o$, without further distinction in the subsequent discussion.
\end{Remark}

Consider the Gromov boundary $\partial X_o^{(N)}$ of $X_o^{(N)}$: for a suitable visual parameter $\epsilon_N$, there exists a visual metric $d_{o,\epsilon_N}\in \mathcal{G}(X_o^{(N)})$. For the sake of convenience, in any geodesic metric space $X$, we fix a visual parameter $\epsilon_N$ for each Morse gauge $N$ and work exclusively with the visual metric $(\partial X_o^{(N)}, d_{o, \epsilon_N})$. 
For two points $p,q\in \partial X_o^{(N)}$, we denote the Gromov product of $p$ and $q$ by $(p\underset{N}{\cdot} q)_{o}$ (instead of the standard $(p\cdot q)$) to explicitly emphasize the dependence on the Morse gauge $N$.  

The following lemma (Lemma~2.16 in \cite{Liu22}) asserts that, up to an acceptable error margin, $(p\underset{N}{\cdot}q)_o$ is approximately equal to $d(o, [p, q])$.

\begin{Lemma}\label{Gp-dis}
    For any Morse gauge $N$, there exists a constant $k_N>0$ such that the following holds.
    Let $X$ be a proper geodesic metric space with basepoint $o\in X$. For any $p, q\in \partial X_o^{(N)}$ and any geodesic $[p, q]$ connecting $p$ and $q$, we have 
    $$\bigl|(p\underset{N}{\cdot}q)_o-d(o,[p,q])\bigr|\le k_N.$$
\end{Lemma}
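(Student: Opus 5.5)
The plan is to prove the estimate first for points of $X_o^{(N)}$ and then pass to the boundary by choosing convenient sequences.

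\textbf{Step 1: reduce to concrete rays and sequences.} By Theorem~\ref{relation}, $p,q$ are represented by $N'$-Morse geodesic rays $\gamma_p,\gamma_q$ issuing from $o$, with $N'$ depending only on $N$. Since $p,q$ lie in the Morse boundary, any bi-infinite geodesic $[p,q]$ has two Morse sides in the ideal triangle $\triangle(o,p,q)$ with sides $\gamma_p,\gamma_q,[p,q]$. Lemma~\ref{slim-M} then gives that $[p,q]$ is $N''$-Morse and that the triangle is $\delta$-slim, with $N''$ and $\delta$ depending only on $N$.

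Take $x_i=\gamma_p(i)$ and $y_j=\gamma_q(j)$. By Lemma~\ref{Lemma:SubpathMorse}, the segments $[o,x_i]$ and $[o,y_j]$ are uniformly Morse. After adjusting gauges, which only costs constants depending on $N$, these points lie in the hyperbolic space $X_o^{(N)}$ and converge to $p$ and $q$.

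\textbf{Step 2: the finite estimate.} For $x,y\in X$ with $[o,x]$ and $[o,y]$ uniformly Morse, the triangle $\triangle(o,x,y)$ is $\delta$-slim by Lemma~\ref{slim-M}. The triangle inequality gives $(x\cdot y)_o\le d(o,[x,y])$. For the reverse bound, slimness gives a point $z\in[x,y]$ that is $\delta$-close to both $[o,x]$ and $[o,y]$. The standard computation then yields $d(o,[x,y])\le (x\cdot y)_o+4\delta$. This shows $|(x_i\cdot y_j)_o-d(o,[x_i,y_j])|\le 4\delta$.

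\textbf{Step 3: pass to the limit.} Two facts are needed.

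First, in the $\delta'$-hyperbolic space $X_o^{(N)}$ with $\delta'=32N(3,0)$, the supremum in the definition of $(p\underset{N}{\cdot}q)_o$ differs from $\liminf_{i,j}(x_i\cdot y_j)_o$ along any particular pair of sequences by at most $2\delta'$. This is the standard estimate from \cite{BridsonHaefliger99, Ghys90}.

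Second, I must show $|d(o,[x_i,y_j])-d(o,[p,q])|\le C(N)$ for all large $i,j$. Let $o_2\in[p,q]$ be a point closest to $o$. Part 2 of Lemma~\ref{30QI} puts $o_2$ within $C(N)$ of $\gamma_p$ or $\gamma_q$, say at a point $\gamma_p(t)$ with $t\approx d(o,o_2)$. Now take $i,j\gg t$ and consider the quadrilateral with vertices $o,x_i,y_j$ and the ideal side $[p,q]$. Cutting it along a diagonal gives two triangles with at least two Morse sides, so both are slim by Lemma~\ref{slim-M}.

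Slimness gives both directions. The geodesic $[x_i,y_j]$ fellow-travels $\gamma_p$ and then $\gamma_q$ until it passes within bounded distance of $o_2$, so $d(o,[x_i,y_j])\le d(o,[p,q])+C$. Conversely, any point of $[x_i,y_j]$ close to $o$ is, by the same slimness, close to $[p,q]$ or to an initial piece of $\gamma_p$ or $\gamma_q$. The latter pieces themselves track $[p,q]$ beyond the branch point near $o_2$, which gives the reverse inequality.

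Combining the three estimates gives the claimed bound, with $k_N$ depending only on $N$.

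\textbf{Main obstacle.} The hardest part is the uniform comparison in the second fact of Step 3. The points $x_i,y_j$ must be taken far enough out that $[x_i,y_j]$ passes the branch region near $o_2$, while all constants must remain independent of $X$, $p$ and $q$. My first approach is to use the Morse property applied to the concatenations of Lemma~\ref{30QI}, exactly as in its proof, to bound where $[x_i,y_j]$ lies.
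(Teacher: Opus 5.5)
The paper does not prove this lemma; it imports it as Lemma~2.16 of \cite{Liu22}, so your argument is a self-contained reconstruction. Its route is the natural one and it works: a finite estimate from slim triangles, then a limit using the fact that in a $\delta$-hyperbolic space any $\liminf$ along representing sequences is within $2\delta$ of the $\sup\liminf$. Two steps, however, need repair.

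\textbf{The gauge in Step~1.} The segments $[o,x_i]=[o,\gamma_p(i)]$ are $N_*$-Morse, where $N_*$ comes from Lemma~\ref{Lemma:SubpathMorse} applied to the gauge $N'$ of Theorem~\ref{relation}. In general this puts $x_i,y_j$ in $X_o^{(N_*)}$, not in $X_o^{(N)}$, and no ``adjustment of gauges'' places them there. Since $(p\underset{N}{\cdot}q)_o$ is a supremum over sequences \emph{in $X_o^{(N)}$}, your first fact in Step~3 cannot be applied to $(x_i),(y_j)$ as written.

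The missing piece is a cross-gauge comparison. Enlarge $N_*$ so that $N\le N_*$; then $X_o^{(N)}\subseteq X_o^{(N_*)}$ isometrically, and $X_o^{(N_*)}$ is $\delta_*$-hyperbolic with $\delta_*=32N_*(3,0)$. Sequences in $X_o^{(N)}$ representing $p,q$ still represent the corresponding points of $\partial X_o^{(N_*)}$. So do $(\gamma_p(i))$ and $(\gamma_q(j))$, but only because the embeddings of Theorem~\ref{relation} are compatible with this inclusion; you should say so explicitly rather than assert that the points ``converge to $p$ and $q$''. Applying the $2\delta_*$ estimate in $X_o^{(N_*)}$ to all these sequences at once gives $\bigl|(p\underset{N}{\cdot}q)_o-\liminf_{i,j}(x_i\cdot y_j)_o\bigr|\le 2\delta_*$, which is what Step~3 actually needs.

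\textbf{The comparison of $d(o,[x_i,y_j])$ with $d(o,[p,q])$.} Your ``conversely'' paragraph does not work as written. In the quadrilateral with vertices $x_i,y_j,q,p$, the sides other than $[x_i,y_j]$ and $[p,q]$ are the \emph{tails} $\gamma_p([i,\infty))$ and $\gamma_q([j,\infty))$, not initial pieces. Lemma~\ref{30QI} is not needed at all.

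Cut along a geodesic ray $[x_i,q]$ and apply Lemma~\ref{slim-M} to $\triangle(x_i,y_j,q)$ and $\triangle(x_i,q,p)$. Each has two uniformly Morse sides, so each of $[x_i,y_j]$ and $[p,q]$ lies in the $2\delta$-neighbourhood of the union of the other three sides. The tails lie at distance at least $\min\{i,j\}$ from $o$. Now take $\min\{i,j\}>d(o,[p,q])+4\delta$:
\begin{itemize}
\item The point $o_2$ is then more than $2\delta$ from both tails, hence $2\delta$-close to $[x_i,y_j]$. This gives $d(o,[x_i,y_j])\le d(o,[p,q])+2\delta$.
\item A closest point $z\in[x_i,y_j]$ to $o$ therefore satisfies $d(o,z)\le d(o,[p,q])+2\delta$. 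So $z$ is also more than $2\delta$ from both tails, hence $2\delta$-close to $[p,q]$. This gives the reverse inequality.
\end{itemize}
The threshold on $i,j$ depends on $p,q$, which is harmless because only the $\liminf$ enters. With these two repairs, combining them with your Step~2 yields a constant $k_N$ depending only on $N$.
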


\begin{Remark}\label{dis}
    Applying Lemma~\ref{Gp-dis} and Proposition~\ref{VisMetric} to the visual metric $(\partial X_o^{(N)}, d_{o, \epsilon_N})$, 
    we directly deduce a constant $D_N\ge 1$ depending only on $N$, such that for any $p, q\in \partial X_o^{(N)}$ and any geodesic $\gamma$ joining $p$ and $q$, 
    the inequality
    \[D_N^{-1}\exp{(-\epsilon_Nd(o, \gamma))}\le d_{o, \epsilon_{N}}(p,q)\le D_N\exp{(-\epsilon_Nd(o, \gamma))}\]
    holds. This estimate will be invoked in subsequent sections.
\end{Remark}

\section{Uniform perfectness and Morse geodesic richness}
\subsection{Center-exhaustive spaces}

Throughout this subsection, we assume that the Morse boundary $\partial_*X$ contains at least three points.

Fix a Morse gauge $N$. For $n\ge 2$ denote by $\partial_*X^{(n,N)}$ the set of all $n$-tuples $(p_1,\dots,p_n)\in(\partial_*X)^n$ of distinct points such that every bi-infinite geodesic $[p_i,p_j]$ ($i\neq j$) is $N$-Morse. 

By Lemma~\ref{slim-M} there exists $\delta_N=\delta_N(N)>0$ such that every triangle with vertices in $\partial_*X^{(3,N)}$ is $\delta_N$-slim.

\begin{Lemma}[Coarse center; \cite{CharneyCordesMurray19}]\label{Lemma:Center}
Let $(p,q,r)\in\partial_*X^{(3,N)}$ and define
\[
E_K(p,q,r):=\bigl\{x\in X\mid x\text{ lies within }K\text{ of all three sides of some ideal triangle } \triangle(p,q,r)\bigr\}.
\]
For any $K\ge\delta_N$, the set $E_K(p,q,r)$ is non‑empty and has diameter at most $L=L(N, K)$.
\end{Lemma}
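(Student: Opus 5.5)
We prove the two assertions separately: non‑emptiness of $E_K(p,q,r)$ follows from slimness of one fixed ideal triangle, and the diameter bound comes from Morse stability of quasi-geodesics combined with slimness. Fix $(p,q,r)\in\partial_*X^{(3,N)}$ and $K\ge\delta_N$.

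\emph{Non‑emptiness.} Fix one ideal triangle $\triangle(p,q,r)$ with $N$-Morse sides $\alpha=[p,q]$, $\beta=[q,r]$, $\gamma=[r,p]$. By Lemma~\ref{slim-M} it is $\delta_N$-slim. Along $\alpha$, consider the sets $A=\{x\in\alpha: d(x,\gamma)\le\delta_N\}$ and $B=\{x\in\alpha: d(x,\beta)\le\delta_N\}$. Both are closed, since $d(\cdot,\gamma)$ and $d(\cdot,\beta)$ are continuous. By slimness, $A\cup B=\alpha$. Moreover, $A$ contains points arbitrarily close to the end $p$ of $\alpha$: $\alpha$ and $\gamma$ are asymptotic to $p$, so tails of $\alpha$ lie in a uniformly bounded neighbourhood of $\gamma$, and those tail points are far from $\beta$. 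Similarly, $B$ contains points near the end $q$. We will make this precise via the $D_0$-fellow-travelling constant used in the proof of Lemma~\ref{30QI}. Since $\alpha\cong\mathbb R$ is connected, $A\cap B\neq\emptyset$. Any point of $A\cap B$ lies within $\delta_N\le K$ of all three sides, so $E_K(p,q,r)\ne\emptyset$.

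\emph{Diameter bound.} Let $x,y\in E_K(p,q,r)$, possibly witnessed by different ideal triangles $\triangle$ and $\triangle'$. The first step is to reduce to a single triangle. Two bi-infinite $N$-Morse geodesics with the same endpoints, say $[p,q]$ and $[p,q]'$, are at Hausdorff distance bounded by a constant $C_1(N)$. This follows by applying Lemma~\ref{slim-M} to the degenerate triangle, or Lemma~\ref{30QI} with $D=0$. Hence $y$ is within $K+C_1$ of every side of the fixed triangle $\triangle$, and it suffices to bound $d(x,y)$ for two points $x,y$ each within $K'=K+C_1$ of every side of $\triangle$.

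Pick $x_\alpha,y_\alpha\in\alpha$ and $x_\beta,y_\beta\in\beta$ within $K'$ of $x$ and $y$ respectively, and suppose for contradiction that $d(x,y)$ is huge. Then $d(x_\alpha,y_\alpha)$ is huge, and without loss of generality $x_\alpha$ lies between $p$ and $y_\alpha$ along $\alpha$. Consider the path
\[
\eta=[q,y_\alpha]_\alpha\cup[y_\alpha,y_\beta]\cup[y_\beta,q]_\beta .
\]
We modify this to get a loop-free quasi-geodesic from $p$ to $r$ passing near $y$: follow $\gamma$ from $p$ to the point near $x$, then jump to $x_\alpha$. The cleaner route is to note that $[p,x_\alpha]_\alpha\cup[x_\alpha,x_\gamma]\cup[x_\gamma,r]_\gamma$ is a $(1,4K')$-quasi-geodesic, since $x_\alpha$ and $x_\gamma$ are both near $x$, and it is the concatenation of two geodesics meeting at an almost-center. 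Equivalently, one can use the Gromov-product formulation via $X_o^{(N')}$ with $o=x$.

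The argument is cleanest in the metric Morse space. Take $x$ as basepoint. Since the sides of $\triangle$ are Morse, the boundary points $p,q,r$ lie in $\partial X_x^{(N'')}$ for a gauge $N''(N)$, by Lemma~\ref{Lemma:SubpathMorse} and Theorem~\ref{relation}. Lemma~\ref{Gp-dis} gives $(p\cdot q)_x,\ (q\cdot r)_x,\ (r\cdot p)_x\le K'+k_{N''}$, and the same bounds hold for $y$. In a $\delta$-hyperbolic space, the set of points at which all three pairwise Gromov products of a triple are at most $c$ has diameter at most about $2c+O(\delta)$. This is the standard tripod/center estimate, applied to approximating sequences converging to $p,q,r$. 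Translating this back to $d(x,y)$ requires controlling the change of basepoint, and that is \emph{the main obstacle}: the spaces $X_x^{(N'')}$ and $X_y^{(N'')}$ differ. To avoid this, I would instead do it directly with the Morse property. Assume $d(x_\alpha,y_\alpha)>L_0$. The segment $[x_\alpha,y_\alpha]_\alpha$ then lies, by slimness of $\triangle$, in $\mathcal N_{\delta_N}(\beta\cup\gamma)$. A continuity argument as in the non-emptiness step then produces a long subsegment of $\alpha$ that fellow-travels both $\beta$ and $\gamma$ closely. This forces $\beta$ and $\gamma$ to fellow-travel each other along a long stretch in opposite directions toward $q$ and $p$, which contradicts the fact that they are geodesics ending at the distinct points $r$ on one side and $q,p$ on the other. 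The length of such a stretch is bounded in terms of $K',\delta_N$ via the Morse gauge, and this bound gives $L=L(N,K)$.
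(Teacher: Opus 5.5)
Your non-emptiness argument is sound, and so is the reduction to a single ideal triangle. The one step needing a sentence is that points of $\alpha$ close enough to $p$ are more than $\delta_N$ from $\beta$; this follows from Morse stability since $p\notin\{q,r\}$, and slimness then places them in $A$.

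The gap is in the diameter bound, which is the real content of the lemma. The paper does not prove it either; it imports it from \cite{CharneyCordesMurray19}. Of your attempts, the $(1,4K')$-quasi-geodesic claim is unproved, the Gromov-product route is dropped at the basepoint change, and the final paragraph is only a heuristic. Slimness plus ``a continuity argument as in the non-emptiness step'' does not give a long subsegment of $\alpha$ close to both $\beta$ and $\gamma$. Slimness puts each point of $[x_\alpha,y_\alpha]_\alpha$ near $\beta$ \emph{or} near $\gamma$, and connectedness yields only one point near both. Such a stretch does exist, but for a different reason: $[x_\beta,x_\alpha]\cup[x_\alpha,y_\alpha]_\alpha\cup[y_\alpha,y_\beta]$ is a $(1,8K')$-quasi-geodesic with endpoints on the Morse geodesic $\beta$, and similarly for $\gamma$. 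More importantly, $\beta$ and $\gamma$ fellow-travelling along a long stretch is no contradiction by itself---they do so along an entire ray near their common endpoint $r$. Everything rests on the words ``in opposite directions'', which you never derive, and on why that is incompatible with the shared endpoint $r$. Your closing claim that the length of the stretch ``is bounded in terms of $K',\delta_N$ via the Morse gauge'' is exactly what has to be proved.

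The missing idea is orientation bookkeeping with asymptotic rays. Let $\sigma,\sigma'$ be $N'$-Morse rays with the same endpoint in $\partial_*X$ and $d(\sigma(0),\sigma'(0))\le c$. Then $d(\sigma(\tau),\sigma'(\tau))\le C=C(N',c)$ for all $\tau\ge 0$: apply Lemma~\ref{slim-M} to the triangle with vertices $\sigma(0)$, $\sigma'(0)$ and the common endpoint to get bounded Hausdorff distance, then use the triangle inequality. Take $N'$ from Lemma~\ref{Lemma:SubpathMorse} and $c=2K'$.

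Parametrize $\alpha$ from $p$ to $q$, $\beta$ from $q$ to $r$, and $\gamma$ from $r$ to $p$. Write $x_\alpha=\alpha(s)$, $y_\alpha=\alpha(t)$, $\ell=t-s>0$, $x_\beta=\beta(v)$, $y_\gamma=\gamma(u)$. Then three ray comparisons do the work:
\begin{enumerate}
\item Comparing the rays from $y_\alpha$ and $y_\gamma$ to $p$ at time $\ell$ gives $x_\gamma=\gamma(u+\ell+e)$ with $|e|\le C+2K'$.
\item Comparing the rays from $x_\alpha$ and $x_\beta$ to $q$ at time $\ell$ gives $d(y,\beta(v-\ell))\le C+K'$.
\item For $\ell>C+2K'$, comparing the rays from $x_\gamma$ and $x_\beta$ to $r$ at time $\ell+e$ gives $d(y,\beta(v+\ell+e))\le C+K'$.
\end{enumerate}
Since $\beta$ is a geodesic, $2\ell+e\le 2C+2K'$, hence $2\ell\le 3C+4K'$. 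Then $d(x,y)\le \ell+2K'$ is bounded in terms of $N$ and $K$ only. This is the precise form of your ``opposite directions'': coming from $r$, $\gamma$ passes $y$ before $x$ while $\beta$ passes $x$ before $y$, and the third comparison forbids this once $\ell$ is large.
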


For a triple $(p,q,r)\in\partial_*X^{(3,N)}$ and $K$ with $E_K(p,q,r)\neq\emptyset$, points of $E_K(p,q,r)$ are called \textbf{$K$-centers} of $(p,q,r)$; when $K$ is clear from the context we simply speak of \textbf{coarse centers}.

\begin{Definition}[Center-exhaustive space]\label{def:center-exh}
A proper geodesic metric space $X$ is said to be \textbf{center-exhaustive space} if there exist a Morse gauge $N$ and a constant $K\ge0$ such that for every point $x \in X$, there exists a triple $(p, q, r) \in \partial_*X^{(3, N)}$ with $x \in E_K(p, q, r)$.
\end{Definition}

\subsection{Uniform Perfectness of Morse boundaries}

There are several definitions for uniformly perfect spaces in the literature \cite{MAR18, Nowak12, Sugawa01}. The definition below follows Mart\'inez-P\'erez and Rodr\'iguez \cite{MAR18}

\begin{Definition}
    A metric space $(X, d)$ is called uniformly perfect if there exists $S> 1$ and $r_0>0$ such that, for every point $x\in X$ and all $r\in(0, r_0]$, there exists a point $x'\in X$ satisfying \[
    \frac{r}{S} < d(x, x') \le r.
    \]
\end{Definition}

Recall that the Morse boundary, when equipped with its direct limit topology, is typically non-metrizable. This intrinsic feature makes it delicate to formulate a suitable notion of uniform perfectness. 
Our definition is as follows.

\begin{Definition}[Uniform perfectness of $\partial_*X$]\label{UP}
Let $X$ be a proper geodesic metric space with $\partial_*X\neq\emptyset$. 
We say $\partial_*X$ is \textbf{uniformly perfect} if for every Morse gauge $N$ 
with $\bigcup_{o\in X}\partial_*^NX_o\neq\emptyset$, there exist a Morse gauge $N'=N'(N)$, constants $S=S(N)>1$ and $r_0=r_0(N)\in(0,1]$ such that:
for any basepoint $o\in X$ with $\partial_*^NX_o\neq\emptyset$, 
any $p\in\partial_*^NX_o$, and any $r\in(0,r_0]$, there exists $q\in\partial_*X$ with
\[
p,q\in\partial X_o^{(N')}\quad\text{and}\quad\frac{r}{S}<d_{o,\epsilon_{N'}}(p,q)\le r.
\]
Correspondingly, let us say $X$ has a uniformly perfect Morse boundary.
\end{Definition}

Since $\partial_*X\neq\emptyset$, there exists at least one basepoint $o\in X$ and one Morse gauge $N$ 
for which an $N$-Morse ray starting at $o$ exists; consequently, the condition 
$\bigcup_{o\in X}\partial_*^NX_o\neq\emptyset$ is non‑vacuous for some $N$.

\begin{Remark}
Alternative metrizable topologies on the Morse boundary have been constructed for certain groups 
\cite{Cashen19, CordesDussauleGekhtman22, qing24}. One could ask whether uniform perfectness 
can be studied with respect to those metrics. However, such metrics are often geometrically opaque 
and ill‑suited for the intrinsic geometric questions we address here; our definition is therefore 
formulated directly in terms of the visual metrics $d_{o,\epsilon_N}$, which have clear geometric meaning.
\end{Remark}

\subsubsection*{Comparison with the hyperbolic case}

For a proper geodesic Gromov hyperbolic space $X$ and a fixed visual parameter $\epsilon>0$, the classical definition reads:

\begin{Definition}[Uniform perfectness of $\partial X$]\label{def:UP-hyp}
Let $X$ be a proper $\delta$-hyperbolic geodesic metric space with $\partial X\neq\emptyset$. 
We say $\partial X$ is \textbf{uniformly perfect} if there exist constants $S>1$ and $r_0\in(0,1]$ 
such that for every $o\in X$, every $p\in\partial X$, and every $r\in(0,r_0]$, 
there exists $q\in\partial X$ with
\[
\frac{r}{S}<d_{o,\epsilon}(p,q)\le r.
\]
\end{Definition}

When $X$ is hyperbolic, Definition~\ref{UP} coincides with Definition~\ref{def:UP-hyp}: 
all geodesics are uniformly Morse, so the Morse gauges $N$ and $N'$ can be chosen uniformly, and the union $\bigcup_{o\in X}\partial_*^NX_o$ equals $\partial X$ for a suitable $N$.

\begin{Remark}
The definition used in \cite{LiangZhou24} for hyperbolic spaces requires the existence of constants $S>1$ and $r_0>0$ that may depend on the basepoint $o$. Our formulation demands $S$ and $r_0$ to be independent of $o$, giving a genuinely \textbf{uniform} condition. 
\end{Remark}

\subsection{Morse boundary rigid spaces}

Consider now the quasi-isometry group of a metric space $X$, denoted by $\mathcal{QI}(X)$.
This group consists of all self-quasi-isometries $f:X\to X$, modulo the equivalence relation $f_1\sim f_2$ if and only if $\mathrm{sup}_{x\in X}d(f_1(x),f_2(x))$ is finite. 
As established in \cite{CharneySultan15, Cordes17}, every self-quasi-isometry of a proper geodesic metric space $X$ induces a homeomorphism $\partial f$ on its Morse boundary. Furthermore, two uniformly close quasi-isometries induce the same homeomorphism on the Morse boundary. 
Consequently, we obtain a natural homomorphism:
\[\partial:\mathcal{QI}(X)\to \operatorname{Homeo}(\partial_*X) 
\] given by $\partial(f)=\partial f$.

Accordingly, we present the following definition of Morse boundary rigidity:

\begin{Definition}[Morse boundary rigid space]\label{MBR}
Let $X$ be a proper geodesic metric space with non-empty Morse boundary $\partial_*X$.
The following are equivalent definitions:
\begin{enumerate}
\item The natural homomorphism $\partial:\mathcal{QI}(X)\to \operatorname{Homeo}(\partial_*X)$ is injective.
\item For every quasi-isometry $f:X \to X$, if $\partial f=\text{id}_{\partial_*X}$, then $\sup_{x\in X} d(x,f(x))<\infty$.
\end{enumerate}
If either condition holds, $X$ is called \textbf{Morse boundary rigid}.
\end{Definition}

The analogous definition can be formulated for sublinearly Morse boundary rigid spaces. The notion of a boundary rigid space for hyperbolic spaces was first studied in \cite{Shchur13, LiangZhou24}. Several characterizations of this property are established in \cite{LiangZhou24}.
The following result establishes a fundamental relationship between these two notions of boundary rigidity.

\begin{Proposition}
    Let $X$ be a proper geodesic metric space with non-empty $\partial_*X$. If $X$ is Morse boundary rigid, then $X$ is sublinearly Morse boundary rigid.
\end{Proposition}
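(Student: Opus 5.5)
The proof will be a restriction argument. By definition, sublinear Morse boundary rigidity asks the following: for every quasi-isometry $f\colon X\to X$ whose induced homeomorphism $\partial_\kappa f$ on the $\kappa$-Morse (sublinearly Morse) boundary $\partial_\kappa X$ is the identity, $\sup_{x\in X}d(x,f(x))<\infty$. I will show that such an $f$ already satisfies $\partial f=\mathrm{id}_{\partial_*X}$ on the Morse boundary. Condition (2) of Definition~\ref{MBR} then gives the conclusion at once. Since $\partial_*X\neq\emptyset$, this hypothesis is not vacuous.

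\textbf{Step 1 (inclusion).} Every $N$-Morse geodesic ray is $\kappa$-Morse for every sublinear function $\kappa$. Indeed, the Morse condition gives a uniform neighbourhood bound $N(K,C)$ for quasi-geodesics, and a uniform bound is a special case of a $\kappa$-sublinear one. This yields a natural map $\iota\colon\partial_*X\to\partial_\kappa X$, sending the bounded-distance class $[\gamma]$ to the sublinear class $[\gamma]_\kappa$. I need $\iota$ to be injective. So let $\gamma,\gamma'$ be Morse rays from $o$ in the same $\kappa$-class, i.e.\ $d(\gamma(t),\gamma')=o(t)$; I must show they are at bounded Hausdorff distance. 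My plan is to take the Morse gauge of $\gamma$ and points $\gamma(t)$, $\gamma'(t')$ at sublinear distance. The concatenation $[o,\gamma'(t')]_{\gamma'}\cup[\gamma'(t'),\gamma(t)]$ is then a quasi-geodesic whose additive error is sublinear, and I will compare it with $[o,\gamma(t)]_\gamma$. The cleaner route is the standard fact (Qing--Rafi--Tiozzo) that two Morse rays which are sublinearly close are bounded close. I could also reprove it from Lemma~\ref{slim-M}: thin triangles with vertices $o,\gamma(t),\gamma'(t')$ force the sides to fellow travel up to a distance comparable to $t$ minus the sublinear gap. Letting $t\to\infty$, $\gamma'|_{[0,t/2]}$ then lies in a uniform neighbourhood of $\gamma$.

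\textbf{Step 2 (naturality).} I will check that $\partial_\kappa f\circ\iota=\iota\circ\partial f$. Let $\gamma$ be a Morse ray from $o$. The image $f(\gamma)$ is a quasi-geodesic ray that is Morse, with gauge controlled by $N$ and the constants of $f$. By \cite{CharneySultan15,Cordes17}, $\partial f[\gamma]=[\gamma']$, where $\gamma'$ is a Morse geodesic ray at bounded Hausdorff distance from $f(\gamma)$. On the other hand, $\partial_\kappa f[\gamma]_\kappa$ is by definition the class of any $\kappa$-Morse ray sublinearly tracking $f(\gamma)$. The ray $\gamma'$ tracks $f(\gamma)$ at bounded distance, so it is such a ray, and hence $\partial_\kappa f(\iota[\gamma])=\iota[\gamma']$.

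\textbf{Step 3 (conclusion).} If $\partial_\kappa f=\mathrm{id}$, Step 2 gives $\iota(\partial f[\gamma])=\iota[\gamma]$ for every $[\gamma]\in\partial_*X$. Injectivity from Step 1 then gives $\partial f=\mathrm{id}_{\partial_*X}$. Morse boundary rigidity now yields $\sup_x d(x,f(x))<\infty$, so $X$ is sublinearly Morse boundary rigid.

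The main obstacle is the injectivity in Step 1: showing that sublinear closeness of two Morse rays upgrades to bounded Hausdorff distance. I would first cite the known embedding $\partial_*X\hookrightarrow\partial_\kappa X$ from the sublinearly Morse literature. If a self-contained argument is required, I would use the slim-triangle route through Lemma~\ref{slim-M} and Lemma~\ref{30QI}, applied to the triangles $\triangle(o,\gamma(t),\gamma'(t'))$, whose third side has sublinear length.
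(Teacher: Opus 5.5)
Your proposal is correct and follows the paper's argument: the Morse boundary embeds in the sublinearly Morse boundary compatibly with the maps induced by quasi-isometries, so $\partial_\kappa f=\mathrm{id}$ forces $\partial f=\mathrm{id}_{\partial_*X}$, and Morse boundary rigidity then bounds the displacement. The paper simply cites \cite{qing22, qing24} for the embedding and its naturality, which you verify by hand in Steps~1--2; your slim-triangle argument for injectivity works because both rays are Morse, so two sides of $\triangle(o,\gamma(t),\gamma'(t'))$ are uniformly Morse.
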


\begin{proof}
From \cite{qing22, qing24}, every point of the Morse boundary belongs to the sublinearly Morse boundary. For any quasi-isometry $f: X\to X$ whose induced homeomorphism on the sublinearly Morse boundary is the identity, then $f$ induces an identity homeomorphism on the Morse boundary. Since $X$ is Morse boundary rigid, then the displacement  $\sup_{x\in X} d(x,f(x))$ is finite. 
\end{proof}

\subsection{Morse geodesically rich spaces}
 Let us first introduce a class of widely studied spaces: all hyperbolic spaces with non-empty Gromov boundaries, as well as all finitely generated groups with non-empty Morse boundaries. 

\begin{Definition}[Uniformly Morse based space]\label{def:UMB}
A proper geodesic metric space \(X\) is called a \textbf{uniformly Morse based space (UMB space)} if there exists a Morse gauge \(N \)  such that for every point \(x \in X\), there exists at least one $N$-Morse geodesic ray \(\gamma_x \colon [0,\infty) \to X\) with \(\gamma_x(0) = x\).

The class of all such spaces is denoted by \(\UMBspace\). When we need to emphasize the specific gauge, we write \(\UMBclass[N]\).
\end{Definition}

The UMB property is invariant under quasi-isometry. It is well-defined for every finitely generated group. Every Gromov-hyperbolic geodesic metric space with non-empty Gromov boundary is a UMB space. 
If $X$ is a cocompact, proper geodesic metric space with $\partial_*X\neq\emptyset$, then $X$ is a UMB space. In particular, every finitely generated group with non-empty Morse boundary is a UMB space.

The notion of \textbf{geodesic richness} for metric spaces was first introduced in \cite{Shchur13}. Subsequently, Liang and Zhou \cite{LiangZhou24} established that for proper Gromov-hyperbolic geodesic metric spaces, the second condition in the original definition follows directly from the first and can therefore be omitted for brevity (see Lemma~16 in \cite{LiangZhou24}). We extend this line of investigation to \textbf{Morse geodesic richness} for arbitrary proper geodesic metric spaces.

\begin{Definition}[Morse geodesically rich space] \label{Def:GeoRich}
Let $N_0$ be a Morse gauge and let $C_0>0$ be a constant. 
A proper geodesic metric space $X$ is called \textbf{$(N_0, C_0)$-Morse geodesically rich} if the following holds:

For every Morse gauge $N$, there exists a constant $C_1 = C_1(N)$ such that for any pair of points $x_1, x_2 \in X$ joined by an $N$-Morse geodesic, one can find a bi-infinite $N_0$-Morse geodesic $\alpha$ satisfying
  \[d(x_1, \alpha)<C_0\quad\mbox {and}\quad
  \left|d(x_1,x_2)-d(x_2,\alpha)\right|<C_1.\]

  The space $X$ is said to be Morse geodesically rich if it is $(N_0, C_0)$-Morse geodesically rich for some Morse gauge $N_0$ and some constant $C_0>0$.
\end{Definition}

Since every geodesic in hyperbolic space is uniformly Morse, applying the above definition to hyperbolic spaces yields the following:

\begin{Definition}[Geodesically rich hyperbolic space] \label{hyGR}
A proper hyperbolic geodesic metric space $X$ is \textbf{geodesically rich} if there exist positive constants $C_0, C_1$ such that
for any pair of points $x_1, x_2\in X$, there exists a bi-infinite geodesic $\alpha$ satisfying
  \[d(x_1, \alpha)<C_0\quad\mbox{and}\quad\left|d(x_1,x_2)-d(x_2,\alpha)\right|<C_1.\]
\end{Definition}

\begin{Remark}
  For proper hyperbolic geodesic metric spaces, our Definition~\ref{hyGR} differs in presentation from Definition~4.1 in \cite{LiangZhou24}, but the two are equivalent.
\end{Remark}


The following lemma asserts that every Morse geodesically rich space is a UMB space.

\begin{Lemma}\label{N-ray}  
Let $X$ be an $(N_0, C_0)$-Morse geodesically rich space for some Morse gauge $N_0$ and constant $C_0$. Then there exists a Morse gauge $N$, depending only on $N_0$ and $C_0$, such that $X\in \mathcal{UMB}(N)$. 
\end{Lemma}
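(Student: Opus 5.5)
The plan is to apply the definition of Morse geodesic richness to the degenerate pair $x_1=x_2=x$, which produces a bi-infinite $N_0$-Morse geodesic passing close to $x$. From that geodesic we then build an $N$-Morse ray starting at $x$, with $N$ controlled only by $N_0$ and $C_0$.

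Fix $x\in X$. The constant path at $x$ is a geodesic of length $0$, so it is $N_1$-Morse for some fixed gauge $N_1$ (one that works for all geodesics of length $\le 1$). By Definition~\ref{Def:GeoRich} there is a bi-infinite $N_0$-Morse geodesic $\alpha$ with $d(x,\alpha)<C_0$. The constant $C_1$ plays no role here, which is why the final gauge does not depend on it. Let $y\in\alpha$ be a point closest to $x$, so $d(x,y)<C_0$, and let $\rho$ be one of the two subrays of $\alpha$ issuing from $y$. By Lemma~\ref{Lemma:SubpathMorse}, $\rho$ is $N'$-Morse, where $N'$ depends only on $N_0$.

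Next we build the ray. Choose points $z_n=\rho(n)$ and geodesics $[x,z_n]$. The path $[y,x]\cup[x,z_n]$ is a $(1,2C_0)$-quasi-geodesic with both endpoints on $\rho$. Hence $[x,z_n]\subseteq\mathcal N_{M}(\rho)$ with $M=N'(1,2C_0)+C_0$. Conversely, each point of $[y,z_n]_\rho$ lies within a uniformly bounded distance of $[x,z_n]$; this follows from a standard continuity/projection argument, or from the slim-triangle Lemma~\ref{slim-M} applied to the triangle $x,y,z_n$, one of whose sides has length $<C_0$. Since $X$ is proper, Arzel\`a--Ascoli gives a subsequence of $[x,z_n]$ converging uniformly on compacta to a geodesic ray $\gamma_x$ with $\gamma_x(0)=x$. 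These bounds pass to the limit, so $d_{\mathcal H}(\gamma_x,\rho)\le M'$ for a constant $M'=M'(N_0,C_0)$.

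The main obstacle is to show that $\gamma_x$ is $N$-Morse with $N$ uniform, depending only on $N_0$ and $C_0$. We use the standard fact that a geodesic at bounded Hausdorff distance from an $N'$-Morse geodesic is Morse with a controlled gauge. Let $\beta$ be a $(K,C)$-quasi-geodesic with endpoints $a,b\in\gamma_x$. Pick $a',b'\in\rho$ with $d(a,a'),d(b,b')\le M'$, and prepend and append the short geodesics $[a',a]$ and $[b,b']$. The result is a $(K,C+2(K+1)M')$-quasi-geodesic, up to the usual bookkeeping, with endpoints on $\rho$. It therefore lies in the $N'(K,C+2(K+1)M')$-neighborhood of $\rho$, and hence within $N'(K,C+2(K+1)M')+M'$ of $\gamma_x$. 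Setting $N(K,C)$ equal to this quantity gives a gauge depending only on $N_0$ and $C_0$. Since $x$ was arbitrary, $X\in\UMBclass[N]$.

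The point requiring care is that these Hausdorff estimates are genuinely uniform in $n$ and in $x$, so that neither the limiting argument nor the final gauge depends on $x$. This is where the fixed bound $d(x,y)<C_0$ is essential.
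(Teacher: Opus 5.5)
Your proposal is correct and is essentially the paper's argument. Both use Morse geodesic richness, of which only the uniform constant $C_0$ matters, to get an $N_0$-Morse bi-infinite geodesic within $C_0$ of the point, pass to a subray via Lemma~\ref{Lemma:SubpathMorse}, and produce a ray from the point asymptotic to it whose Morse gauge depends only on $N_0$ and $C_0$. The paper gets the last step in one line from Lemma~\ref{slim-M}, applied to the triangle with vertices $x$, $y$ and the endpoint of $\rho$ (one side of length $<C_0$); it also applies richness to a pair of distinct points rather than the degenerate pair $x_1=x_2$. You instead re-derive that step by hand, via Arzel\`a--Ascoli and the bounded-Hausdorff-distance argument.
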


\begin{proof}
Fix an arbitrary point $o\in X$ and let $x$ be any other point in $X$. Since the geodesic segment $[o, x]$ is of finite length, an elementary argument shows that $[o, x]$ is $N'$-Morse for some Morse gauge $N'$. 
    By the $(N_0, C_0)$-Morse geodesic richness of $X$, there exists a bi-infinite $N_0$-Morse geodesic $\alpha$ such that $d(o, \alpha)<C_0$. 
    Choose a point $o_1\in \alpha$ satisfying $d(o, o_1)<C_0$. By Lemma~\ref{Lemma:SubpathMorse}, the geodesic subsegment $[o_1, \alpha_+]_{\alpha}$ is $N_0'$-Morse, where $N_0'$ depends only on $N_0$. 
    Applying  Lemma~\ref{slim-M}, the geodesic $[o, \alpha_+]$ is $N$-Morse, with $N=N(N_0, C_0)$.

\end{proof}

The following lemma will be used repeatedly in later sections.

\begin{Lemma}\label{keybound}
   Let $X$ be an $(N_0, C_0)$-Morse geodesically rich space for some Morse gauge $N_0$ and constant $C_0>0$. Given any Morse gauge $N$, let $x_1, x_2\in X$ be two points connected by an $N$-Morse geodesic, and let $\alpha$ be the bi-infinite Morse geodesic provided by Definition~\ref{Def:GeoRich}.
    Let $x_1'$ and $x_2'$  be the closest-point projections of $x_1$ and $x_2$ onto $\alpha$.
Then there exists a constant $C>0$, depending only on $N_0, C_0$ and $N$, such that 
\[
d(x_1', x_2')< C.
\]
\end{Lemma}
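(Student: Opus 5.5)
We bound $d(x_1',x_2')$ by a Morse quasi-geodesic argument. The idea is to build a path from $x_2$ to $x_1$ that uses $\alpha$, and to show that, because $x_2$ is almost exactly $d(x_1,x_2)$ away from $\alpha$, this path is a quasi-geodesic with constants depending only on $N_0,C_0,N$. Then the Morse property forces $x_1'$ and $x_2'$ to be close.

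\textbf{Setup.} By Definition~\ref{Def:GeoRich}, $d(x_1,x_1')<C_0$ and $\bigl|d(x_1,x_2)-d(x_2,x_2')\bigr|<C_1$, since $d(x_2,\alpha)=d(x_2,x_2')$. Set $a:=d(x_2,x_2')$ and $b:=d(x_1',x_2')$. The triangle inequality gives
\[a\le d(x_2,x_1)+d(x_1,x_1')<a+C_1+C_0,\]
so $d(x_2,x_1')<a+C_1+2C_0$.

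\textbf{Main step.} Consider the concatenation
\[\eta=[x_2,x_2']\cup[x_2',x_1']_\alpha .\]
By the first part of Lemma~\ref{30QI}, applied with $o=x_2$, $o_2=x_2'$ the closest point on $\alpha$, and the endpoint $p$ taken to be $x_1'$ (or the appropriate endpoint of $\alpha$ beyond $x_1'$), $\eta$ is a $(3,0)$-quasi-geodesic. The same closest-point estimate, used directly, gives
\[d(x_2,x_1')\ge \tfrac13\bigl(a+b\bigr).\]
Combining this with the upper bound from the setup,
\[a+b\le 3\bigl(a+C_1+2C_0\bigr),\qquad\text{so}\qquad b\le 2a+3C_1+6C_0 .\]

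This bound is not uniform, because $a$ can be arbitrarily large. Controlling $b$ independently of $a$ is the key obstacle, and we do not use the crude $(3,0)$ estimate here.

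\textbf{Sharper estimate via Morseness of $\alpha$.} The point $x_2'$ is a closest point of $\alpha$ to $x_2$, and $\alpha$ is $N_0$-Morse. By the standard Morse projection property (as in the proof of Lemma~\ref{30QI}), the geodesic $[x_2,x_1']$ passes within a constant $D=D(N_0)$ of $x_2'$. Concretely, apply the Morse property to the $(3,0)$-quasi-geodesic $\eta$ together with a slim-triangle argument: $\triangle(x_2,x_2',x_1')$ has two sides that are Morse or short, so Lemma~\ref{slim-M} applies. Alternatively, use that projections to Morse geodesics are coarsely contracting. Either way, this gives
\[d(x_2,x_1')\ge a+b-2D .\]
Hence $a+b-2D<a+C_1+2C_0$, and therefore
\[b<C_1+2C_0+2D.\]

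The remaining work is to justify the estimate $d(x_2,x_1')\ge a+b-2D$ uniformly, with $D$ depending only on $N_0$. A cleaner route is to cite Lemma~\ref{30QI}(2) with $\alpha_1=[x_1',x_2]$. If contracting projections are not available, we instead use the $N$-Morseness of $[x_1,x_2]$: Lemma~\ref{30QI} applied with $o=x_1$, $o_1=x_1'$ and $D=C_0$ yields a point of $[x_1,x_2]$ near $x_2'$. Comparing distances along the $N$-Morse geodesic $[x_1,x_2]$ then again gives $b\lesssim C_1+C_0+C(N,N_0,C_0)$. Thus $C$ depends only on $N_0$, $C_0$ and $N$, the last through $C_1=C_1(N)$.
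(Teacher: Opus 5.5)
Your overall strategy (put $x_2'$ uniformly close to a geodesic from $x_2$ to $x_1'$ or $x_1$, then compare lengths) matches the paper's. However, the step that carries all the content is not established, and the version you lead with is false. You try to get $d(x_2,x_1')\ge a+b-2D$ with $D=D(N_0)$ from the Morseness of $\alpha$ alone. In a general proper geodesic space, though, an $N_0$-Morse geodesic is only sublinearly contracting, and closest-point projections onto it can have unbounded diameter.

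For a counterexample, take a line $\gamma$ and, for $R_k\to\infty$ with $T_k=\lfloor\sqrt{R_k}\rfloor$, attach far-apart apexes $z_k$. Join each $z_k$ by edges of length $R_k$ to $\gamma(n_k),\gamma(n_k+1),\dots,\gamma(n_k+T_k)$. Then $\gamma$ is still a geodesic and is Morse for a single gauge: an excursion through $z_k$ has length at least $2R_k$ but joins points at distance at most $T_k$. Yet for $x_2=z_k$, $x_2'=\gamma(n_k)$ and $x_1'=\gamma(n_k+T_k)$ you get $d(x_2,x_1')=R_k=a$ while $b=T_k\to\infty$.

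Two related points fail for the same reason. The Morse property of $\alpha$ cannot be applied to $\eta$, since its endpoint $x_2$ is not on $\alpha$. And $\triangle(x_2,x_2',x_1')$ does not have two Morse-or-short sides unless you already know $[x_2,x_1']$ is uniformly Morse, which is exactly what is at stake.

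The uniformity has to come from the hypothesis that $[x_1,x_2]$ is $N$-Morse, and your main step never uses it. The paper applies Lemma~\ref{slim-M} to the triangle with vertices $x_1,x_2,x_1'$, whose sides are $N$-Morse or of length $<C_0$. This shows $[x_2,x_1']$ is $N_1$-Morse with $N_1=N_1(N,C_0)$. Applying that Morse property to the $(3,0)$-quasi-geodesic $\eta$, whose endpoints lie on $[x_2,x_1']$, gives $x_3\in[x_2,x_1']$ with $d(x_2',x_3)\le N_1(3,0)$. Length comparison then yields $b<C_0+C_1+2N_1(3,0)$.

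Your fallbacks relate to this as follows.
\begin{itemize}
\item Your ``cleaner route'' via Lemma~\ref{30QI}(2) with $\alpha_1=[x_1',x_2]$ is precisely the paper's argument, but only once the missing Morseness of $[x_1',x_2]$ is proved.
\item Your last variant is misparametrized. With $o=x_1$, the closest point in Lemma~\ref{30QI} is $x_1'$, not $x_2'$, and $o_1=x_1'$ does not lie on $[x_1,x_2]$.
\item What does work along those lines is to apply the $N$-Morse property of $[x_1,x_2]$ directly to $[x_2,x_2']\cup[x_2',x_1']_\alpha\cup[x_1',x_1]$. This is a $(3,2C_0)$-quasi-geodesic with both endpoints on $[x_1,x_2]$, so it puts $x_2'$ within $N(3,2C_0)$ of $[x_1,x_2]$. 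Comparing lengths along $[x_1,x_2]$ then gives $b<C_0+C_1+2N(3,2C_0)$, even bypassing Lemma~\ref{slim-M}.
\end{itemize}
As written, however, your proposal does not contain a valid version of this step.
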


\begin{proof}
By the Morse geodesic richness of $X$ and the definition of closest point projections $x_1', x_2'$ of $x_1, x_2$ onto $\alpha$, we directly obtain
\begin{equation}\label{geobou}
    d(x_1, x_1')<C_0, \left|d(x_1, x_2)-d(x_2, x_2')\right|< C_1,
\end{equation}
 where $C_1$ is a positive constant depending only on $N$.
Since the geodesic $[x_1, x_2]$ is $N$-Morse and $d(x_1, x_1')<C_0$, it follows from Lemma~\ref{slim-M} that there exists a Morse gauge $N_1$ depending only on $N$ and $C_0$ such that the geodesic $[x_2, x_1']$ is $N_1$-Morse. 
Moreover, by Lemma~\ref{30QI}, the concatenation $[x_2, x_2']\cup[x_2', x_1']_{\alpha}$ forms a $(3, 0)$-quasi-geodesic and there exists a point $x_3\in [x_1', x_2]$ satisfying 
    \[d(x_2', x_3)\le N_1(3,0).\]
    Applying the triangle inequality twice, we first derive that 
    \[
\begin{aligned}
\left|d(x_1',x_2)-d(x_2,x_2')-d(x_2',x_1')\right|
&\le \left|d(x_1',x_2')-d(x_1',x_3)\right| + \left|d(x_2',x_2)-d(x_2,x_3)\right| \\
&\le 2N_1(3, 0).
\end{aligned}
\]

Then, combining with the triangle inequality again, we have 
$$\left|d(x_1,x_2)-d(x_2,x_2')-d(x_2', x_1')\right|\le \left|d(x_1,x_2)-d(x_1',x_2)\right|+2N_1(3, 0).$$
Now, substituting the inequalities from Equation~\eqref{geobou}, we find that
\[d(x_1', x_2')<C_0+C_1+2N_1(3, 0).\] 
As $C_1$ and $N_1$ depend only on $N$, we can set $C=C_0+C_1+2N_1(3, 0)$, where $C$ is a constant depending only on $N_0, C_0$ and $N$. This completes the proof.

\end{proof}

\section{Proof of Theorem~\ref{thm:equivalence}}

\subsection{Morse geodesic richness and center-exhaustiveness}
In this subsection, we show that Morse geodesic richness is equivalent to center-exhaustiveness, a result guaranteed by the next two propositions.

\begin{Proposition}\label{MGR-CCF}
   Let $X$ be a proper geodesic metric space. If $X$ is Morse geodesically rich, then $X$ is center-exhaustive. 
\end{Proposition}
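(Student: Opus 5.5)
Given $x\in X$, the plan is to produce a triple $(p,q,r)\in\partial_*X^{(3,N)}$ with $x\in E_K(p,q,r)$, where $N$ and $K$ depend only on $(N_0,C_0)$ and the constants from Definition~\ref{Def:GeoRich}. We apply richness twice. First, richness at the pair $(x,x)$ (or directly the condition $d(x_1,\alpha)<C_0$) gives a bi-infinite $N_0$-Morse geodesic $\alpha$ with $d(x,\alpha)<C_0$. Let $x'\in\alpha$ be a closest point. Then $x$ is within $C_0$ of the side $[\alpha_-,\alpha_+]=\alpha$, and it remains to find a third point $r$ so that $x$ is uniformly close to $[\alpha_-,r]$ and $[\alpha_+,r]$.

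To get $r$, we move off $\alpha$ by a large distance $R$ and apply richness again. By Lemma~\ref{N-ray}, $X\in\mathcal{UMB}(N_1)$ for some $N_1=N_1(N_0,C_0)$, but a ray from $x$ may run along $\alpha$, so we proceed differently. Take $y_\pm\in\alpha$ at distance $R$ from $x'$ on either side; the segments $[x',y_\pm]_\alpha$ are uniformly Morse by Lemma~\ref{Lemma:SubpathMorse}. Apply richness to the pair $x_1=y_+$, $x_2=x'$ (with Morse gauge $N_0'$, constant $C_1(N_0')$). This yields a bi-infinite $N_0$-Morse geodesic $\beta$ passing within $C_0$ of $y_+$, with $d(x',\beta)\approx R$ up to $C_1$. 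By Lemma~\ref{keybound}, the projection of $x'$ to $\beta$ lies within $C$ of the projection of $y_+$. Hence $\beta$ "branches off" $\alpha$ near $y_+$, and both ends of $\beta$ diverge from $\alpha$ near $y_+$, at least in the sense that $\beta$ stays roughly $R$-far from $x'$.

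Choose $r$ to be the endpoint of $\beta$ that is not asymptotic to $\alpha_+$; one of $\beta_\pm$ works, since $\beta$ cannot fellow-travel $\alpha$ in both directions while staying far from $x'$. Consider the triangle $\triangle(\alpha_-,\alpha_+,r)$. Its sides $[\alpha_-,r]$ and $[\alpha_+,r]$ must be shown uniformly Morse. For this we use Lemma~\ref{slim-M} on triangles built from $\alpha$, $\beta$ and the short segment of length at most $C_0+C$ between them near $y_+$, via concatenation arguments as in Lemma~\ref{30QI}. Once $(\alpha_-,\alpha_+,r)\in\partial_*X^{(3,N)}$, the triangle is $\delta_N$-slim. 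Then $[\alpha_-,r]$ must pass near $x'$: the geodesic from $\alpha_-$ to $r$ fellow-travels $\alpha$ from $\alpha_-$ until near $y_+$ and then follows $\beta$, so it passes within a bounded distance of $x'$. Since $x$ lies between $\alpha_-$ and $y_+$ along $\alpha$, it is close to $[\alpha_-,r]$. To make $x$ close to $[\alpha_+,r]$ as well, we symmetrize. Do the same construction at $y_-$ to obtain $\beta'$, and use the triple $(p,q,r)$ with $p$ an endpoint of $\beta'$ away from $\alpha$, $q$ an endpoint of $\beta$ away from $\alpha$, and $r=\alpha_+$, or a similar choice. Now $x'$ sits in the middle of the "tripod" formed by $\alpha$, $\beta$ and $\beta'$, and each of the three sides passes within a bounded distance of $x'$. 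With $R$ a fixed constant larger than the accumulated errors, all distances are bounded in terms of $N_0,C_0$.

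The main obstacle is the uniform control just sketched: showing that the chosen endpoints are distinct, that the resulting ideal geodesics are uniformly Morse, and that they pass uniformly close to $x$ rather than shortcutting. We plan to handle this by fixing $R$ large relative to $C_0$, $C_1$, $\delta_{N}$ and the constant from Lemma~\ref{keybound}, and then repeatedly applying Lemma~\ref{slim-M} and Lemma~\ref{30QI}.
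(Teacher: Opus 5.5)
There is a genuine gap in how you pick the third vertex, and the symmetrization does not repair it.

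\textbf{The selection rule does not locate the center.} The rule ``take the endpoint of $\beta$ not asymptotic to $\alpha_+$'' gives no control on where the center of $\triangle(\alpha_-,\alpha_+,r)$ lies. The richness inequalities only say that $\beta$ passes within $C_0$ of $y_+$ and stays about $R$ away from $x'$. They allow $\beta$ to run along $\alpha$ beyond $y_+$ for an arbitrarily long stretch.

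Here is a counterexample already in a trivalent tree. Let $\beta$ enter $\alpha$ through the third edge at the vertex $z\in\alpha$ with $d(x',z)=R+T$ on the $+$ side, follow $\alpha$ back to $y_+$, and leave through the third edge at $y_+$. Then $d(y_+,\beta)=0$ and $d(x',\beta)=R$, and neither endpoint of $\beta$ lies in $\{\alpha_-,\alpha_+\}$. For the endpoint $r$ beyond $z$, the side $[\alpha_+,r]$ is at distance $R+T$ from $x'$, with $T$ arbitrary. The same choice breaks the triple $(p,q,\alpha_+)$.

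\textbf{The tripod picture is also wrong.} $x'$ is not ``the middle of a tripod''. The new geodesics branch off $\alpha$ near $y_\pm$ (or farther out). So, in a tree, no triple of endpoints of $\alpha,\beta,\beta'$ is centered within an $R$-independent distance of $x'$. The only thing that could save the construction is that $R$ is a fixed constant, so a center near $y_+$ is within $R+O(1)$ of $x$. But then $\beta'$ is superfluous, and you still need a triangle whose center is provably near $y_+$, which your choice does not give.

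\textbf{The repair.} The right triangle is $\triangle(\alpha_-,\beta_-,\beta_+)$: both endpoints of the new geodesic, plus the endpoint of the ray $[y_+,\alpha_-]_\alpha$ through the far point $x'$. The argument runs as follows.
\begin{enumerate}
\item Since $\beta$ stays far from $x'$, $\alpha_-\notin\{\beta_\pm\}$, and $x'$ is far from $[y_+,\beta_\pm]$.
\item Hence $x'$ is close to $[\alpha_-,\beta_\pm]$ by slimness.
\item Lemma~\ref{30QI} transfers this to the projection of $x'$ onto $\beta$.
\item By Lemma~\ref{keybound}, that projection lies within $C$ of the projection of $y_+$, so $y_+$ is a uniform center.
\end{enumerate}
This is exactly the paper's proof, with your $\beta$ in the role of the paper's $\alpha$, $a=y_+$, $b=x'$ and $\sigma=[y_+,\alpha_-]_\alpha$.

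The paper simply runs this argument at $a=x$. It takes a uniformly Morse ray $\sigma$ from $x$ (Lemma~\ref{N-ray}) and a point $b\in\sigma$ at a fixed large distance, then applies richness to the pair $(x,b)$. So $x$ is the near point and the far point lies on the ray, the reverse of your pair $(y_+,x')$.

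Your worry that a ray from $x$ may run along $\alpha$ disappears because the rich geodesic is produced after the ray and relative to $b$. It is therefore automatically far from $b$. This forces $\sigma_+\notin\{\alpha_\pm\}$ and makes $x$ itself a uniform center of $(\sigma_+,\alpha_-,\alpha_+)$.
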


\begin{proof}
Suppose $X$ is $(N_0, C_0)$-Morse geodesically rich with respect to some Morse gauge $N_0$ and some constant $C_0$.
For any $a\in X$, by Lemma~\ref{N-ray}, there exists an $N$-Morse geodesic ray $\sigma$ emanating from $a$, where the Morse gauge $N$ depends only on $N_0$ and $C_0$. 
According to Lemma~\ref{Lemma:SubpathMorse}, every geodesic subsegment of $\sigma$ is $N'$-Morse, where $N'=N'(N)$ is a Morse gauge depending only on $N$.
By the definition of Morse geodesical richness of $X$, there exists a constant $C_1$ depending only on $N'$ such that, 
for the point $b\in\sigma$ and $b\neq a$,
there exists an $N_0$-Morse bi-infinite geodesic $\alpha$ satisfying the following conditions: 
\begin{equation}\label{ab}
    d(a, \alpha)<C_0,\quad \left|d(b, \alpha)-d(a, b)\right|< C_1.
\end{equation}
Let $a', b'\in \alpha$ denote the nearest points on $\alpha$ to $a$ and $b$, respectively(see Figure~\ref{fig3}).

Consider the ideal triangle $\triangle(a, a', \sigma_+)$. Since the geodesic segment $[a', a]$ has length at most $C_0$ and the geodesic ray $\sigma$ is $N$-Morse, 
from Lemma~\ref{slim-M} that the ideal triangle $\triangle(a,a', \sigma_+)$ is $\delta_1$-slim, where $\delta_1$ is a positive constant determined exclusively by $N$ and $C_0$. Thus, the constants $\delta_1+C_0$ and $\delta_1+C_1$ depend only on $N_0$ and $C_0$, and are independent of $d\in\sigma$.
Now, choose $b\in \sigma$ such that 
\[d(a, b)> \max\{\delta_1+C_0, \delta_1+C_1\}.\]
We proceed to show that $\sigma_+\notin\{\alpha_-, \alpha_+\}$. Suppose, for contradiction, that $\sigma_+\in\{\alpha_-, \alpha_+\}$.
Then we have
\[
d(b, [a, a'])\ge d(a,b)-C_0>\delta_1.
\]
This inequality implies that, 
\[
d(b, [a', \sigma_+]_{\alpha})\le \delta_1.
\]
It follows that
\[
d(b, b')=d(b, \alpha)\le d(b, [a', \sigma_+]_{\alpha})\le \delta_1.
\]
Substituting this into Equation~\eqref{ab}, 
\[
d(a,b)<C_1+d(b, b')\le C_1+\delta_1.
\] This clearly contradicts our initial choice of $b$. 

By Lemma~\ref{Lemma:SubpathMorse}, the geodesic segments $[a', \alpha_+]_\alpha$, $[\alpha_-, a']_{\alpha}$ are $N_0'$-Morse for some Morse gauge $N_0'$ depending only on $N_0$. 
Lemma~\ref{slim-M} guarantees the existence of a Morse gauge $N_1=N_1(C_0, N_0')$ such that the geodesics $[a, \alpha_+]$ and $[a, \alpha_-]$ are $N_1$-Morse. And the ideal triangles $\triangle(a, a', \alpha_+)$ and $\triangle(a, a', \alpha_-)$ are $\delta$-slim, where $\delta$ is a positive constant depends only on $C_0$ and $N_0'$. 
Similarity, the ideal triangles $\triangle(a, \sigma_+,\alpha_+)$ and $\triangle(a, \sigma_+, \alpha_-)$ are $\delta'$-slim for some positive $\delta'=\delta'(N_1, N)$. The geodesics $[\alpha_-, \sigma_+]$ and $[\alpha_+,\sigma_+]$ are $N_2$-Morse, where the Morse gauge $N_2=(N_1, N')$ depends on $N_1$ and $N'$. Set $\delta_2=\max\{\delta, \delta'\}$. Since $N_1, N, N'$ and $N_0'$ all ultimately depend on $N_0$ and $C_0$, the constant $\delta_2$ and the Morse gauge $N_2$ depend only on $N_0$ and $C_0$.

\begin{figure}[!ht]
    \centering
\labellist
\small{
  \pinlabel $a$ at 33 51
  \pinlabel $a'$ at 21 47
  \pinlabel $b'$ at 22 38
  \pinlabel $\alpha_+$ at 6 -1
  \pinlabel $b_1$ at 99 37
  \pinlabel $b$ at 92 45
  \pinlabel $b_1'$ at 38 18
  \pinlabel $\alpha_-$ at -5 80
  \pinlabel $\alpha$ at 18 56
  \pinlabel $\sigma_+$ at 134 48
  \pinlabel $\sigma$ at 66 51
  }
   \endlabellist 
    \includegraphics[width=0.4\linewidth]{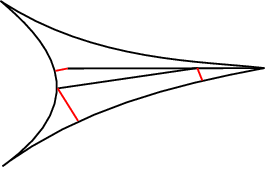}
    \caption{Caption}
    \label{fig3}
\end{figure}

Assume that $b\in \sigma$ is a point satisfying 
\[d(a,b)> \max\{\delta_1+C_0, \delta_1+C_1, 2\delta_2+C_0+C_1\}.\]
By applying the slim triangle property of the ideal triangle $\triangle(a, a', \alpha_+)$ and using the fact that $d(a, a')<C_0$, we derive the inequality
$$d(b, [a', \alpha_+]_{\alpha})< d(b, [a, \alpha_+])+\delta_2+C_0.$$
Combining this with Equation~\eqref{ab}, we obtain the following chain of inequalities:

\begin{align*}
d(a, b)
&< d(b, b')+C_1 \\
&\le d\bigl(b, [a', \alpha_+]_{\alpha}\bigr)+C_1 \\
&< d\bigl(b, [a, \alpha_+]\bigr)+\delta_2+C_0+C_1.
\end{align*}
It follows immediately that $d(b, [a, \alpha_+])>\delta_2$. 
The ideal triangle $\triangle(a, \alpha_+, \sigma_+)$ is $\delta_2$-slim. It follows that there exists a point $b_1\in[\sigma_+, \alpha_+]$ such that 
\[d(b, b_1)\le\delta_2.\]
 
Consider the concatenation $[b_1, b]\cup[b, b']\cup[b', \alpha_+]_{\alpha}$. The geodesic $[\alpha_+, \sigma_+]$ is $N_2$-Morse.
By Lemma~\ref{30QI}, 
there exists a positive constant $C_2$ depending only on $N_2$ and $\delta_2$ such that there is a point $b_1'\in [\alpha_+, \sigma_+]$ satisfying 
\[d(b', b_1')\le C_2.\]
Similarity, there exists a point $b_2'\in [\alpha_-,\sigma_+]$ such that 
\[d(b', b'_2)\le C_2.\]
In addition, Lemma~\ref{keybound} guarantees the existence of a constant $C$ depending only on $N_0, C_0$ and $N'$ such that 
\[d(a', b')<C.\] 
Let $K=C_0+C+C_2$ and $N_3=\max\{N_2, N_0\}$. It follows that the point $a\in E_K(\sigma_+, \alpha_-, \alpha_+)$ with $(\sigma_+, \alpha_-, \alpha_+)\in \partial_*X^{(3, N_3)}$. Notably, both the constant $K$ and the Morse gauge $N_3$ depend only on $N_0$ and $C_0$.
   
\end{proof}

\begin{Proposition}\label{cce-mgr}
     Let $X$ be a proper geodesic metric space. If $X$ is center-exhaustive, then $X$ is Morse geodesically rich. 
\end{Proposition}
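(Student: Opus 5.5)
Assume $X$ is center-exhaustive with gauge $N$ and constant $K$. Fix a Morse gauge $N_1$ and points $x_1,x_2$ joined by an $N_1$-Morse geodesic $[x_1,x_2]$. The plan is to find a bi-infinite geodesic $\alpha$, with Morse gauge depending only on $N$, that passes within a uniform distance $C_0=C_0(N,K)$ of $x_1$ and moves away from $x_2$ so that $d(x_2,\alpha)\approx d(x_1,x_2)$. By center-exhaustiveness there is a triple $(p,q,r)\in\partial_*X^{(3,N)}$ with $x_1\in E_K(p,q,r)$. Thus $x_1$ lies within $K$ of all three sides $[p,q]$, $[q,r]$, $[r,p]$, each $N$-Morse. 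We set $N_0:=N$ and $C_0:=K+1$. Whichever side we take as $\alpha$, we get $d(x_1,\alpha)<C_0$, and the upper bound $d(x_2,\alpha)\le d(x_1,x_2)+K$ is automatic. So the whole difficulty is the lower bound $d(x_2,\alpha)\ge d(x_1,x_2)-C_1(N_1)$ for a suitable choice of side.

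For each side $\gamma$ of the triangle, let $y_\gamma$ be a closest point to $x_2$ on $\gamma$. By Lemma~\ref{30QI}, the path $[x_2,y_\gamma]\cup[y_\gamma,\cdot]_\gamma$ is a $(3,0)$-quasi-geodesic. Since $[x_1,x_2]$ is $N_1$-Morse and $x_1$ is $K$-close to $\gamma$, the triangle argument in the proof of Lemma~\ref{keybound} (via Lemma~\ref{slim-M}, applied to $[x_2,x_1']$ with $x_1'\in\gamma$ near $x_1$) yields
\[
d(x_1,x_2)\ \approx\ d(x_2,y_\gamma)+d(y_\gamma,x_1'),
\]
up to an additive error $E=E(N,K,N_1)$. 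Hence it suffices to find one side $\gamma$ for which $d(y_\gamma,x_1')$ is bounded in terms of $N,K,N_1$.

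The main obstacle is to show that such a side exists, and I plan a contradiction/thinness argument. Suppose that for all three sides the projection $y_\gamma$ lies far (at distance greater than some large $R$) from the center region. Each side $\gamma$ splits at $x_1'$ into two half-rays, and these half-rays pair up into three rays from (near) $x_1$ toward $p$, $q$ and $r$. The projection of $x_2$ onto a side lands on one of its two half-rays, say the one toward $p$ on $[p,q]$. The ray from $x_1$ toward $p$ is shared, up to $\delta_N$, by two sides. Among the three ideal vertices, at most one can be the direction ``toward'' $x_2$: if $[x_1,x_2]$ fellow-travels the ray to $p$ for a length $\ge R$, it cannot also fellow-travel the ray to $q$ for length $\ge R$, because the rays to $p$ and $q$ diverge after distance about $L(N,K)$ from $x_1$ (by the slimness and the coarse-center diameter bound, Lemma~\ref{Lemma:Center}). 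I will make this precise via the quasi-geodesic $[x_2,y_\gamma]\cup[y_\gamma,x_1']_\gamma$ and Lemma~\ref{30QI}(2), which show that $[x_1,x_2]$ passes near $y_\gamma$ and so fellow-travels $[x_1',y_\gamma]_\gamma$. Therefore the side opposite the favoured vertex, say $[q,r]$ when $x_2$ heads toward $p$, has its projection $y_\gamma$ within $R=R(N,K,N_1)$ of $x_1'$, and we take $\alpha=[q,r]$.

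Finally, we collect the constants: $C_1=E+R+K$ depends only on $N_1$ (together with the fixed $N,K$), while $N_0=N$ and $C_0=K+1$ are independent of $N_1$. This matches Definition~\ref{Def:GeoRich}.
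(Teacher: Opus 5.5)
Your proposal is correct and has the same skeleton as the paper's proof. You take $(p,q,r)\in\partial_*X^{(3,N)}$ with $x_1$ as a $K$-center, set $N_0=N$ and $C_0=K+1$, reduce to bounding the distance from the center to the closest point of $x_2$ on a single side, and choose the side opposite the ideal vertex toward which $x_2$ ``points''.

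The difference is in how you prove this last claim.

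\textbf{The paper's route.} It fixes the favoured vertex concretely. It projects $b=x_2$ onto the whole triangle, obtaining $b_1\in[p,q]\cap\mathcal{N}_{\delta_{N_0}}([p,r])$, so $p$ is favoured. It then projects $b$ onto the opposite side $[q,r]$ to get $b_2$. Both $(3,0)$-concatenations, through $b_1$ and through $b_2$, are Hausdorff-close to $[a,b]$. A two-case analysis then shows that either $b_1$ or $b_2$ is itself a coarse center of the triangle, and the bound follows at once from the diameter bound of Lemma~\ref{Lemma:Center}.

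\textbf{Your route.} You argue by divergence. A far projection forces $[x_1,x_2]$ to fellow-travel a long initial segment of a half-ray toward one vertex. It cannot do so for two distinct vertices $v\neq v'$: after replacing the half-rays by asymptotic ones on the side $[v,v']$, the two directions are opposite halves of one geodesic and diverge linearly. This tripod picture is valid and more conceptual. However, it needs auxiliary facts that the paper avoids: uniform Hausdorff closeness of asymptotic Morse rays with nearby basepoints, and bookkeeping of fellow-travelling lengths. The paper's route needs only Lemma~\ref{Lemma:Center}.

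\textbf{Two minor remarks.}
\begin{itemize}
\item Your first reduction via the estimate in Lemma~\ref{keybound} is more than you need. The triangle inequality $d(x_2,\gamma)\ge d(x_1,x_2)-K-d(x_1',y_\gamma)$ already suffices, as in the paper.
\item The main step is cleaner as a direct dichotomy, rather than starting from ``suppose all three projections are far''. If no side has a far projection, any side works. Otherwise the far projection fixes the vertex, and the opposite side works.
\end{itemize}
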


\begin{proof}

Let $N$ be a Morse gauge and suppose $a,b\in X$ are such that there exists an $N$-Morse geodesic $[a,b]$. 
Since $X$ is a center-exhaustive space, there exist a Morse gauge $N_0$ and a constant $K\ge0$ such that $a$ is a $K$-center of $(p,q,r)\in \partial_*X^{(3, N_0)}$, where the sides $\alpha_1=[p,q],\alpha_2=[q,r]$ and $\alpha_3=[p,r]$. 
The positive constant $\delta_{N_0}$ depends only on $N_0$ such that the triangle $\triangle(p,q,r)$ is $\delta_{N_0}$-slim.

\begin{figure}[!ht]
    \centering
\labellist
\small
{
  \pinlabel $p$ at 0 27
  \pinlabel $a$ at 135 80
  \pinlabel $a_2$ at 125 35
  \pinlabel $r$ at 270 10
  \pinlabel $q$ at 120 207
  \pinlabel $\alpha_1$ at 57 80
  \pinlabel $\alpha_3$ at 172 88
  \pinlabel $\alpha_2$ at 168 28
  \pinlabel $b$ at 79 170
  \pinlabel $b_1$ at 124 153
  \pinlabel $b_2$ at 98 35
  \pinlabel $a_1$ at 102 100
}
   \endlabellist 
    \includegraphics[width=0.4\linewidth]{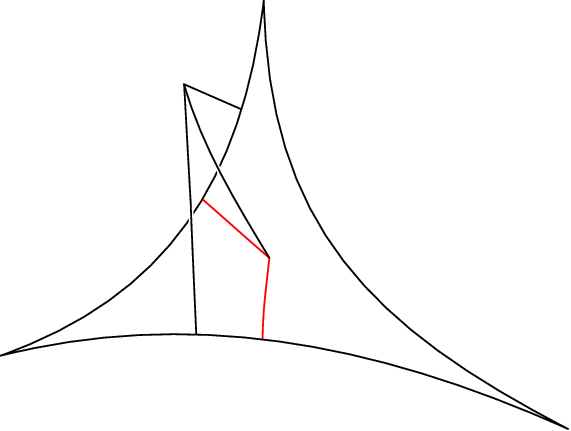}
    \caption{Caption}
    \label{fig4}
\end{figure}

  Let $b_1$ be the closest-point projection of $b$ onto $\triangle(p,q,r)$. Without loss of generality, assume $b_1\in\alpha_1\cap\mathcal{N}_{\delta_{N_0}}(\alpha_3)$. Let $b_2\in\alpha_2$ be the closest-point projection of $b$ onto $\alpha_3$. See Figure~\ref{fig4}.
  
  \vspace{1em}
  \textbf{Claim:} There exists a constant $D=D(N_0,K,N)$ such that either
\[
  {d(a, b_1)\le D}\quad\text{or}\quad
  {d(a, b_2)\le D}.
\]
  
We postpone the proof of this claim to the end of the argument. 
With the claim established,
set $C_0=K+1, C_1=D+1$; 
these are constants depending only on $K, N_0$, and $N$.
If $d(a, b_1)\le D$, then we have 
 $$d(a, \alpha_1)< C_0 \quad\text{and}\quad\left|d(a, b)-d(b,\alpha_1)\right|\le d(a, b_1)< C_1.$$ 
 In this case, we select $\alpha_1$ as the desired bi-infinite $N_0$-Morse geodesic. 
Similarity, if $d(a, b_2)\le D$, we select $\alpha_2$ as the desired  bi-infinite $N_0$-Morse geodesic. This completes the proof that $X$ is an $(N_0, C_0)$-Morse geodesically rich space. 

\textbf{Proof of the claim: }

Since $a$ is a $K$-center of $\triangle(p,q,r)$, we may choose points $a_1\in\alpha_1, a_2\in\alpha_2$ such that $d(a,a_1)\le K$ and $
d(a,a_2)\leq K$. By Lemma~\ref{30QI}, the concatenations $\eta_1=[a_1,b_1]_{\alpha_1}\cup[b_1,b]$ and $\eta_2=[a_2,b_2]_{\alpha_2}\cup[b_2,b]$ are $(3,0)$-quasi-geodesics.
Thus, the concatenations $[a, a_1]\cup\eta_1$ and $[a, a_2]\cup\eta_2$ are continuous $(3,K)$-quasi-geodesics whose endpoints are $a$ and $b$. Since $[a, b]$ is $N$-Morse, by Lemma~2.1 in \cite{Cordes17}, 
then there exists a constant $2N(3,K)$ depending only on $K$ and $N$ such that
\[d_{\mathcal{H}}([a, b], [a, a_1]\cup\eta_1)\le 2N(3,K)\quad\text{ and}\quad d_{\mathcal{H}}([a, b], [a, a_2]\cup\eta_2)\le 2N(3,K).\]
Note that $d(a, a_1)\le K$ and $d(a,a_2)\le K$. This implies that 
\[
d_{\mathcal{H}}(\eta_1, \eta_2)\le 4N(3,K)+2K
\]
In particular, there exists a point $c\in\eta_1$ such that \[d(b_2,c)\leq D_1,\]
where $D_1= 4N(3,K)+2K$. We now consider two cases.

\textbf{Case I: }$\boldsymbol{c\in [b_1,b]}$

Note that $b_1$ denotes the closest point projection of $b$ onto $\triangle(p,q,r)$ and $c\in[b, b_1]$. It follows that
  $b_1$ is the closest point projection of $c$ onto $\triangle(p,q,r)$. Thus, 
\[d(c,b_1)\leq d(c, b_2)\leq D_1,\] which further implies that \[d(b_1,b_2)\leq d(b_1,c)+d(b_2,c)\leq 2D_1.\] 
  Therefore, points $a, b_1\in E_{K_1}(p,q,r)$, where $K_1=\max\{K, 2D_1, \delta_{N_0}\}\ge \delta_{N_0}$. By Lemma~\ref{Lemma:Center}, there exists a positive constant $L_1$ depending only on $N_0, D_1$ and $K$ such that
  \[d(a, b_1)\le L_1.\]

 \textbf{Case II: } $\boldsymbol{c\in[a_1,b_1]_{\alpha_1}}$ 

Given that $d(b_1, \alpha_3)\le \delta_{N_0}, d(a_1, \alpha_3)\le d(a_1, a)+d(a, \alpha_3)\le 2K$, with the $N_0$-Morse property of geodesic $\alpha_3$, a standard argument guarantees the existence of a constant $D_2=N_0(1, \delta_{N_0}+2K)$ such that 
\[c\in[b_1, a_1]_{\alpha_1}\subset \mathcal{N}_{D_2}(\alpha_3).\]
Consequently, the distance between $b_2$ and $\alpha_3$ satisfies the inequality
\[d(b_2, \alpha_3)\le d(b_2, c)+d(c, \alpha_3)\le D_1+D_2.\]
Also, the distance $d(b_2, \alpha_1)\le d(b_2, c)\le D_1$. This shows that points $a, b_2\in E_{K_2}(p,q,r)$, where $K_2=\max\{K, D_1+D_2, \delta_{N_0}\}\ge \delta_{N_0}$. Again by Lemma~\ref{Lemma:Center}, there exists a constant $L_2$ depending only on $N_0, D_1, D_2$ and $K$ such that 
\[
d(a, b_2)\le L_2.
\]

Set $D=\max\{L_1, L_2\}$. Then $D$ depends only on $N_0, K$ and $N$. Hence, the desired claim follows immediately.

\end{proof}

 The proof of Theorem~\ref{thm:equivalence} will be completed using the three propositions below, developed in the subsequent two subsections.

\subsection{Morse geodesic richness implies uniform perfectness }

\begin{Proposition}\label{MGR-UP}
If $X$ is Morse geodesically rich, then the Morse boundary $\partial_*X$ is uniformly perfect.    
\end{Proposition}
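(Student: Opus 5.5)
Given a Morse gauge $N$, a basepoint $o$ with $\partial_*^NX_o\neq\emptyset$, a point $p\in\partial_*^NX_o$ represented by an $N$-Morse ray $\sigma$ from $o$, and $r\in(0,r_0]$, the plan is to build $q$ by the following recipe. Pick a point $x$ on $\sigma$ at a distance $t$ from $o$ that we control. Use Morse geodesic richness to produce a bi-infinite $N_0$-Morse geodesic $\alpha$ passing near $x$ and "transverse" to $[o,x]$. Take $q$ to be the endpoint of $\alpha$ that is not $p$. Then $d(o,[p,q])\approx t$, so Remark~\ref{dis} gives $d_{o,\epsilon_{N'}}(p,q)\approx e^{-\epsilon_{N'}t}$ up to multiplicative constants. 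Choosing $t$ with $e^{-\epsilon_{N'}t}$ comparable to $r$ then yields $r/S<d(p,q)\le r$ once $S$ is large relative to the constants $D_{N'}$ and $e^{\epsilon_{N'}\cdot(\text{additive errors})}$.

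Concretely, I would apply Definition~\ref{Def:GeoRich} with $x_1=x$ and $x_2=o$. By Lemma~\ref{Lemma:SubpathMorse}, $[o,x]_\sigma$ is $N''$-Morse with $N''=N''(N)$. This yields $\alpha$ with $d(x,\alpha)<C_0$ and $|d(o,\alpha)-t|<C_1$. The second inequality is the key one, since it puts $\alpha$ at distance about $t$ from $o$. The argument in the proof of Proposition~\ref{MGR-CCF}, together with Lemma~\ref{keybound}, shows the following. For $t$ larger than a constant depending only on $N,N_0,C_0$, the point $p$ is not an endpoint of $\alpha$, and the triangles $\triangle(x,\alpha_\pm,p)$ and $\triangle(o,\alpha_\pm,p)$ are uniformly slim with uniformly Morse sides (Lemma~\ref{slim-M}). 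Moreover, one of $\alpha_\pm$, which I call $q$, has the property that the geodesic $[p,q]$ passes within a uniform distance $K$ of $x$ (the triangle center argument). Hence $d(o,[p,q])\ge t-K$ and $d(o,[p,q])\le t+K$. For the lower bound I would argue that $[o,q]$ and $[o,p]$ fellow travel up to roughly $x$, again via slimness and Lemma~\ref{30QI}.

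Next I have to check membership in $\partial X_o^{(N')}$. The ray $[o,q]$ is built from the triangle with sides $[o,x]_\sigma$ (which is $N$-Morse), $[x,x']$ (of length less than $C_0$), and a subray of $\alpha$ (which is $N_0$-Morse). By Lemma~\ref{slim-M} it is $N_3$-Morse with $N_3$ depending only on $N,N_0,C_0$. Theorem~\ref{relation} then puts both $p$ and $q$ in $\partial X_o^{(N')}$ for a gauge $N'=N'(N)$ independent of $o$ and $t$. With $N'$ fixed, Remark~\ref{dis} gives
\[
D^{-1}e^{-\epsilon t}\le d_{o,\epsilon}(p,q)\le De^{-\epsilon t},
\]
where $\epsilon=\epsilon_{N'}$ and $D=D_{N'}e^{\epsilon K}$. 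Given $r$, I would set $t=\epsilon^{-1}\log(D/r)$, so that $d(p,q)\le r$ and $d(p,q)\ge r/D^2$. Taking $S=2D^2$ and $r_0$ small enough that this $t$ exceeds the threshold needed above completes the argument.

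The main obstacle is the two-sided estimate $|d(o,[p,q])-t|\le K$ with $K$ uniform. In particular, one has to make sure that the chosen endpoint $q$ really makes $[p,q]$ pass near $x$ and not near $o$, i.e.\ that $\alpha$ does not run back toward $o$. This is exactly where the condition $|d(o,\alpha)-d(o,x)|<C_1$ is used. I would handle it by imitating the argument of Proposition~\ref{MGR-CCF} with the roles of $a,b$ played by $x,o$. That argument shows $x$ is a coarse center of $(p,\alpha_-,\alpha_+)$. Hence $d(x,[p,\alpha_\pm])$ is bounded for both choices of sign, so either endpoint serves as $q$. Since $[p,\alpha_\pm]$ is uniformly Morse and passes near $x\in\sigma$, fellow traveling with $\sigma$ gives $d(o,[p,q])\ge t-K$.
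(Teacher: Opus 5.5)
Your overall scheme is the paper's: richness applied with $x_1=x$, $x_2=o$; $q\in\{\alpha_-,\alpha_+\}$; membership in $\partial X_o^{(N')}$ via Lemma~\ref{slim-M} and Theorem~\ref{relation}; and Remark~\ref{dis} with $t\approx\epsilon^{-1}\log(1/r)$. Your lower bound also matches the paper's Claim~B: both $\sigma$ and $[o,q]$ pass uniformly close to $x$ (Lemmas~\ref{keybound} and~\ref{30QI}), and this works for either endpoint.

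The gap is in the upper bound $d(o,[p,q])\le t+K$, which you justify by transplanting Proposition~\ref{MGR-CCF} with $(a,b)$ replaced by $(x,o)$. That transplant fails. In Proposition~\ref{MGR-CCF} the far point $b$ lies on the ray from $a$ to $\sigma_+$, and this is what rules out $\sigma_+\in\{\alpha_\pm\}$ and makes $a$ a center. Here $o$ lies \emph{behind} $x$, not on the ray $[x,p]_\sigma$, so nothing prevents $[x,p]_\sigma$ from fellow-travelling $\alpha$.

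Both claims you derive from it are false, as the $3$-regular tree shows.
\begin{itemize}
\item Let $\alpha$ be the union of $[x,p]_\sigma$ and the ray leaving $x$ in the third direction. Then $x\in\alpha$ and $d(o,\alpha)=t$ exactly, so $\alpha$ is an admissible output of Definition~\ref{Def:GeoRich} for every $t$. Yet $p=\alpha_+$, so no threshold on $t$ excludes this case.
\item Let $\alpha$ instead follow $\sigma$ only from $x$ to $y=\sigma(t+s)$ and then branch off at both ends. Then $p\notin\{\alpha_\pm\}$, and the center of $(p,\alpha_-,\alpha_+)$ is $y$, at distance $s$ from $x$. For the endpoint $\alpha_+$ beyond $y$, one has $d(o,[p,\alpha_+])=t+s$.
\end{itemize}
So ``either endpoint serves'' is wrong: choosing $q=\alpha_+$ gives $d_{o,\epsilon}(p,q)\approx e^{-\epsilon(t+s)}$, which falls below $r/S$ once $s$ is large.

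The fix is the paper's case split, and it needs no threshold on $t$.
\begin{itemize}
\item If $p\in\{\alpha_\pm\}$, take $q$ to be the other endpoint and use $\alpha$ itself as the geodesic $\gamma$ in Remark~\ref{dis}. Then $|d(o,\gamma)-t|<C_1$ follows directly from richness.
\item If $p\notin\{\alpha_\pm\}$, do not try to make $x$ a center. Apply $\delta$-slimness of $\triangle(p,\alpha_-,\alpha_+)$ at the projection $x'$ of $x$ onto $\alpha$. This puts $x'$ within $\delta$ of $[p,\alpha_+]$ or of $[p,\alpha_-]$; let $q$ be the corresponding endpoint. This gives $d(o,[p,q])<t+C_0+\delta$.
\end{itemize}
Your sentence that ``one of $\alpha_\pm$ \ldots\ has the property that $[p,q]$ passes within a uniform distance $K$ of $x$'' is exactly this statement. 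What has to go is its justification via coarse centers, the claim that $p$ is never an endpoint of $\alpha$ for large $t$, and the assertion that the choice of sign does not matter.
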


\begin{proof}
Suppose $X$ is an $(N_0, C_0)$-Morse geodesically rich space for some Morse gauge $N_0$ and some constant $C_0$.
   Let $N$ be any Morse gauge. Fix a basepoint $o\in X$ and suppose that $\partial_*^NX_o$ is non-empty. For any point $p\in\partial_*^NX_o$, choose a geodesic ray $\sigma$ emanating from $o$ to $p$.
    
   For any point $x\in \sigma$ and $x\neq o$, Lemma~\ref{Lemma:SubpathMorse} implies that the geodesic segment $[o,x]_{\sigma}$ and the geodesic ray $[x, p]_{\sigma}$ are $N_1$-Morse, where $N_1= N_1(N)$ is a Morse gauge depending only on $N$.
For the $N_1$-Morse geodesic $[o, x]$, by the Morse geodesic richness of $X$,
there exists a constant $C_1>0$ depending only on $N_1$ and a bi-infinite $N_0$-Morse geodesic $\alpha$ satisfying 

\begin{equation}\label{C0C1}
     d(x,\alpha)<C_0, \left|d(o, \alpha)-d(o,x)\right|<C_1.
\end{equation}

   Let $x'$ and $o'$ denote the closest point projections of $x$ and $o$ onto $\alpha$, respectively. 
  By Lemma~\ref{keybound}, there exists a positive constant $C_2$ depending only on $N_0, C_0, N_1$ such that 
  \begin{equation}\label{C2}
    d(o', x')<C_2.
  \end{equation}

\textbf{Claim A:}
We have $p,\alpha_-,\alpha_+\in \partial X_o^{(N')}$ for some Morse gauge $N'$ depending only on $N$. 

Let us prove this claim. 
The geodesic segment $[o,x]_{\sigma}$ is $N_1$-Morse and $d(x,x')<C_0$; 
hence, by Lemma~\ref{slim-M}, the geodesic segment $[o, x']$ is $N_2$-Morse, where $N_2=N_2(N_1, C_0)$. 
Note that Lemma~\ref{Lemma:SubpathMorse} applied again implies that the geodesics $[x',\alpha_+]_\alpha$ and $[x',\alpha_-]_\alpha$ are $N_0'$-Morse with $N_0' = N_0'(N_0)$. 
Lemma~\ref{slim-M} ensures that the geodesics $[o,\alpha_+]$ and $[o,\alpha_-]$ are $N_3$-Morse, where $N_3=N_3(N_0', N_2)$. 
By Theorem~\ref{relation}, it follows that $p, \alpha_-,\alpha_+\in \partial X_o^{(N')}$ for some $N'=N'(N_3, N)$. Since $N_3$ ultimately depends only on $N$, the Morse gauge $N'$ depends only on $N$. Hence, the claim holds.

The geodesic $\alpha$ depends on the choice of $x\in \sigma$. 
We now distinguish two cases for further analysis.

\textbf{Case I}: $\boldsymbol{p\in \{\alpha_-, \alpha_+\}}$  

Without loss of generality, we assume that $p=\alpha_-$. By Remark~\ref{dis}, there exists a constant $D_{N'}>0$ such that
\[
D_{N'}^{-1}\exp\left(-\epsilon_{N'}d(o,\alpha)\right) < d_{o, \epsilon_{N'}}(\alpha_-, \alpha_+) < D_{N'}\exp\left(-\epsilon_{N'}d(o,\alpha)\right).
\]

Since $\left|d(o, \alpha)-d(o, x)\right|<C_1$, it follows that
\[
D_1^{-1}\exp\left(-\epsilon_{N'}d(o,x)\right) < d_{o, \epsilon_{N'}}(p, \alpha_+) < D_1\exp\left(-\epsilon_{N'}d(o,x)\right),
\]
where $D_1=D_{N'}\exp\left(C_1\epsilon_{N'}\right)$.

\textbf{Case II:} $\boldsymbol{ p\notin \{\alpha_-, \alpha_+\}}$ 

\begin{figure}[!ht]
    \centering
\labellist
\small{
  \pinlabel $o$ at 0 93
  \pinlabel $x$ at 30 93
  \pinlabel $p$ at 110 90
  \pinlabel $\alpha_-$ at 105 133
  \pinlabel $\alpha$ at 60 115
  \pinlabel $x_1$ at 26 60
  \pinlabel $o'$ at 41 70
  \pinlabel $x'$ at 52 85
  \pinlabel $x''$ at 66 67
  \pinlabel $\alpha_+$ at 61 10
  }
   \endlabellist 
    \includegraphics[width=0.25\linewidth]{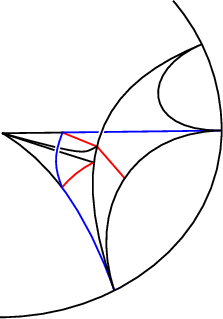}
    \caption{Picture of Case II in Theorem~\ref{MGR-UP}}
    \label{fig:4}
\end{figure}

Let us consider the ideal triangle $\triangle(p, \alpha_-, \alpha_+)$. As we shall show in the following argument, this ideal triangle is $\delta$-slim for some constant $\delta = \delta(N)$ depending only on $N$.

Since $[o, \alpha_+]$ is $N_3$-Morse and the geodesic $\sigma$ is $N$-Morse, Lemma~\ref{slim-M} implies that the geodesic $[p, \alpha_+]$ is $N_4$-Morse, where $N_4 = N_4(N_3, N)$. With the $N_0$-Morse geodesic $\alpha$, by Lemma~\ref{slim-M}, it follows that the ideal triangle $\triangle(p, \alpha_-, \alpha_+)$ is $\delta$-slim, with $\delta$ depending only on $N_4$ and $N_0$. Recalling that $N_3$ itself depends only on $N$, we conclude that $\delta$ depends only on $N$.

Thus, for the point $x'\in \alpha$, without loss of generality, there exists a point $x''\in [p, \alpha_+]$ such that $d(x',x'')\le\delta$ (see Figure~\ref{fig:4}). 

We first derive the bound for $d(o, [p,\alpha_+])$:
\[
d(o, [p,\alpha_+])\le d(o,x'')\le d(o, x)+d(x, x')+d(x', x'')< d(o, x)+C_0+\delta.
\]
By Remark~\ref{dis}, we have
\begin{equation}\label{D_3}
    D_2\exp\left(-\epsilon_{N'}d(o, x)\right)<d_{o, \epsilon_{N'}}(p, \alpha_+),
\end{equation}
where $D_2=D_{N'}^{-1}\exp\left(-\epsilon_{N'}(C_0+\delta)\right)>0$. 

Since the geodesic $[o, \alpha_+]$ is $N_3$-Morse, Lemma~\ref{30QI} guarantees the existence of a point $x_1\in [o, \alpha_+]$ and a constant $C_3=C_3(N_3)$ such that 
\[
d(o', x_1)\le C_3.
\] 
Thus, set $D_3=C_0+C_2+C_3$ and by Equation~\eqref{C0C1} and Equation~\eqref{C2} we obtain
\begin{equation}\label{D}
    d(x, x_1)\le d(x,x' )+d(x', o')+d(o', x_1)<D_3,
\end{equation}
where the constant $D_3$ depends only on $N$.

\textbf{Claim B:}
There exists a constant $D_4 = D_{N'}\exp\left(\epsilon_{N'}(D_3+2\delta')\right)$, where $\delta'$ is the constant defined below, such that
\begin{equation}\label{D_4}
    d_{o,\epsilon_{N'}}(p, \alpha_+)< D_4\exp\left(-\epsilon_{N'}d(o, x)\right).
\end{equation}

We proceed with the proof of the claim as follows. First, select a point $o_1\in [p, \alpha_+]$ such that $d(o, [p, \alpha_+])=d(o, o_1)$.  
Given that the geodesic segment $[x, p]$ is $N_1$-Morse and $[p, \alpha_+]$ is $N_4$-Morse, Lemma~\ref{slim-M} guarantees that the triangle $\triangle(x, p, \alpha_+)$ is $\delta_1$-slim, where the constant $\delta_1 = \delta_1(N_1, N_4)$. 
 Next, Lemma~\ref{Lemma:SubpathMorse} implies that the geodesic $[x_1, \alpha_+]_{\alpha}$ is $N_3'$-Morse for some Morse gauge $N_3'=N_3'(N_3)$. Again applying Lemma~\ref{slim-M}, the geodesic $[x, \alpha_+]$ is $N_4=N_4(C_0, N_0')$-Morse for some Morse gauge $N_4$ and the triangle $\triangle(x, x_1, \alpha_+)$ is $\delta_2$-slim, with $\delta_2=\delta_2(N_4,N_3')$. Finally, it is straightforward to verify that the constant $\delta'=\max\{\delta_1,\delta_2\}$ depends only on the relevant Morse gauges, which themselves depend only on $N$.

Let $\sigma$ denote the geodesic concatenation $[p, x]_\sigma\cup[x, x_1]\cup[x_1, \alpha_+]$. Applying the slim triangle property twice, we have
\[
d(o_1, \sigma)\le 2\delta'.
\] 
By the triangle inequality, it is straightforward to see that,
\[
d(o, \sigma)\ge d(o, x)-d(x, x_1).
\]
Combining this with Equation~\eqref{D}, we obtain
\[
d(o, \sigma)> d(o, x)-D_3.
\]

To establish Equation~\eqref{D_4}, we first need to derive a lower bound for the distance from $o$ to the geodesic $[p, \gamma_+]$. Applying the triangle inequality once more, we have  
\[
d(o, o_1)\ge d(o, \sigma)-d(o_1, \sigma)> (d(o, x)-D_3)-2\delta'.
\]
That is, \[d(o, [p, \alpha_+])>d(o, x)-(D_3+2\delta')\]

Thus, by setting $D_4=D_{N'}\exp\left(\epsilon_{N'}(D_3+2\delta')\right)$, the desired Equation~\eqref{D_4} follows directly from Remark \ref{dis}, which completes the proof of Claim B.

Combining Equation~\eqref{D_3} and Equation~\eqref{D_4}, we obtain
\[
D_2\exp\left(-\epsilon_{N'}d(o, x)\right)<d_{o, \epsilon_{N'}}(p, \alpha_+)< D_4\exp\left(-\epsilon_{N'}d(o, x)\right).
\]

Let $m=\min\{D_1^{-1}, D_2\}>0$ and $M=\max\{D_1,D_4\}$ be constants. Note that both $m$ and $M$ depend only on $N$ and the Morse geodesic richness of the space $X$. For any Morse gauge $N$ and any basepoint $o\in X$ with $\partial_*^NX_o\neq\emptyset$, there exist a Morse gauge $N'$ (as in Claim A) and constants $\displaystyle{S=\frac{M}{m}>1}$ and $\displaystyle{r_0=\min\{M, 1\}}\in(0, 1]$ such that the following holds:

For any point $p\in\partial_*^NX_o$ and every $r\in (0, r_0]$, choose a point $x\in \sigma$ satisfying 
\[
d(o, x)=\frac{1}{\epsilon_{N'}}\ln\left(\frac{M}{r}\right),
\]
where $\sigma$ is the geodesic ray from $o$ to $p$ as defined previously. As discussed above, the geodesic $\alpha$ depends on the choice of $x\in \sigma$. In both Case I and Case II, we can always select a point $q\in \{\alpha_-, \alpha_+\}$ such that $p, q\in \partial X^{(N')}_o$ and
\[
\frac{r}{S}<d_{o,\epsilon_{N'}}(p,q)<r.
\]
This completes the proof of the desired result.

\end{proof}

\subsection{Uniformly perfectness implies center-exhaustiveness}

\begin{Proposition}
    Let $X\in\mathcal{UMB}(N)$ for some Morse gauge $N$. If its Morse boundary $\partial_*X$ is uniformly perfect, then $X$ is center-exhaustive.  
\end{Proposition}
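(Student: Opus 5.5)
Fix an arbitrary point $a\in X$. Since $X\in\mathcal{UMB}(N)$, there is an $N$-Morse geodesic ray $\sigma$ with $\sigma(0)=a$, so $p:=\sigma_+\in\partial_*^NX_a$. Uniform perfectness of $\partial_*X$ then gives a Morse gauge $N'=N'(N)$ and constants $S>1$, $r_0\in(0,1]$, all independent of $a$. We apply the definition at basepoint $o=a$, with $r=r_0$ and then with $r=r_0/S^2$ (or a similar small radius), to get two points $q_1,q_2\in\partial X_a^{(N')}$ satisfying
\[
\frac{r_0}{S}<d_{a,\epsilon_{N'}}(p,q_1)\le r_0,\qquad \frac{r_0}{S^3}<d_{a,\epsilon_{N'}}(p,q_2)\le \frac{r_0}{S^2}.
\]
By Remark~\ref{dis}, each estimate turns into a two-sided bound on $d(a,[p,q_i])$, with error depending only on $N$. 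Specifically, $d(a,[p,q_1])\le T_1$ and $T_2\le d(a,[p,q_2])\le T_3$, where the constants $T_j$ depend only on $N$ and $T_2$ can be made larger than any prescribed constant $A(N)$ by shrinking the second radius. The triangle inequality for $d_{a,\epsilon_{N'}}$ gives $d_{a,\epsilon_{N'}}(q_1,q_2)\ge r_0/S-r_0/S^2>0$, so $q_1\ne q_2$. Hence $d(a,[q_1,q_2])$ is also bounded by a constant depending only on $N$.

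Next we check that $(p,q_1,q_2)\in\partial_*X^{(3,N'')}$ for some gauge $N''=N''(N)$. The rays $[a,p],[a,q_1],[a,q_2]$ are uniformly Morse by Theorem~\ref{relation}, so Lemma~\ref{slim-M} makes every bi-infinite side uniformly Morse. The ideal triangles are then $\delta$-slim with $\delta=\delta(N)$, and the triangle $\triangle(p,q_1,q_2)$ is uniformly slim as well.

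It remains to show that $a$ lies within a bounded distance $K=K(N)$ of all three sides of $\triangle(p,q_1,q_2)$. We already have $d(a,[p,q_1])\le T_1$ and $d(a,[q_1,q_2])$ bounded. The difficulty is the side $[p,q_2]$, which passes far from $a$ by design. We therefore need to choose the triple more carefully. One option is to replace $p$ by a point $p'$ with $d(a,[p',q_i])$ bounded for all three pairs. The idea is to take the tripod structure of the tree-like triangle $\triangle(p,q_1,q_2)$ centered near $a$ and use the far point $q_2$ in a different role: since $q_2$ is close to $p$ in the visual metric, $[p,q_2]$ fellowtravels $\sigma$ up to distance about $T_2$ and then branches off. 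An alternative I would try first is simpler. Apply uniform perfectness at $p$ to get $q_1$, and then apply it again to $q_1$ itself (still based at $a$, with $q_1\in\partial_*^{N_1}X_a$ for a gauge $N_1=N_1(N)$) at radius $r_0$ to get $q_3$ with $d_{a,\epsilon}(q_1,q_3)\asymp r_0$. This gives three points $p,q_1,q_3$ with $d(a,[p,q_1])$ and $d(a,[q_1,q_3])$ bounded. We then need $d(a,[p,q_3])$ bounded, i.e.\ $d_{a,\epsilon}(p,q_3)$ bounded below. The visual metric alone does not prevent $q_3$ from lying near $p$, so this requires an extra step. Among the candidates, applying the definition with radius $r_0/S^k$ for several $k$ gives points pairwise at comparable but distinct scales. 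By the ultrametric-like inequality for Gromov products, $(x\cdot z)\ge\min\{(x\cdot y),(y\cdot z)\}-\delta$, one of the triples built from $p$, $q_1$ and a suitable new point has all three Gromov products based at $a$ bounded by a constant.

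The main obstacle is this last step: guaranteeing, from one-point-at-a-time uniform perfectness, a third boundary point whose visual distance to both $p$ and $q_1$ is bounded below. I would handle it with the $\delta$-hyperbolicity of $X_a^{(N'')}$ as follows. Take $q_1$ at scale $r_0$ from $p$, and take $q_1'$ at scale $r_0/S^{k}$ from $q_1$, where $k$ is large and fixed depending only on $N$. Then $(q_1'\cdot q_1)_a$ is large while $(p\cdot q_1)_a$ is bounded, so the Gromov product inequality forces $(p\cdot q_1')_a\le (p\cdot q_1)_a+\delta$, which is bounded. All pairwise products among $p,q_1,q_1'$ are then bounded except $(q_1\cdot q_1')_a$. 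This is the wrong configuration, so instead we take $q_1'$ at scale $r_0$ from $q_1$ and compare. If $(p\cdot q_1')_a$ is large, then $q_1'$ is visually close to $p$, and we use the point $q_1''$ at scale $r_0$ from $q_1'$. Iterating finitely many times, with the number of steps bounded via the doubling-type behaviour of the visual metric, should produce a triple with all products bounded. Lemma~\ref{Gp-dis} then converts this into $a\in E_K$ of that triple.
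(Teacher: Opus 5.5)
Your first two paragraphs already contain a complete proof, and it is essentially the paper's. You fix the basepoint $a$ and apply uniform perfectness at two fixed radii to obtain $q_1,q_2$. All three pairwise visual distances among $p,q_1,q_2$ are then bounded below by constants depending only on $N$: with your radii these are $r_0/S$, $r_0/S^3$ and $r_0(S-1)/S^2$. The paper uses $r=r_0$ and $r=r_0/(2S)$ and gets $r_0/(2S^2)$ for all three. Remark~\ref{dis} turns these into uniform upper bounds on $d(a,[p,q_1])$, $d(a,[p,q_2])$ and $d(a,[q_1,q_2])$. Uniform Morseness of the sides then follows from Theorem~\ref{relation} and Lemma~\ref{slim-M}. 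In particular, the bound $d(a,[p,q_2])\le T_3$ that you wrote down yourself, with $T_3=\frac{1}{\epsilon_{N'}}\ln(D_{N'}S^3/r_0)$, is exactly what the third side needs.

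The ``difficulty'' in your third paragraph is therefore not a real obstacle, and this is where the proposal goes wrong. It is true that $[p,q_2]$ passes farther from $a$ than the other two sides, so $a$ need not be near the tripod point of $\triangle(p,q_1,q_2)$. But Definition~\ref{def:center-exh} only asks for a constant $K$ that is uniform over the points of $X$; it does not ask for $a$ to be a $\delta_N$-center. Since your second radius is a fixed function of $N$, so is $T_3$. Hence $a\in E_K(p,q_1,q_2)$ with $K=\max\{T_1,T_3,T_{12}\}$, where $T_{12}$ is your bound on $d(a,[q_1,q_2])$; Lemma~\ref{Lemma:Center} applies for every $K\ge\delta_N$. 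The same observation shows that the configuration you discarded as ``wrong'' (taking $q_1'$ at scale $r_0/S^k$ from $q_1$ for a fixed $k$) would also have worked.

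You should delete the iterative replacement in your last paragraph. It is unnecessary, and as written it is not a proof, for two reasons:
\begin{enumerate}
\item Its termination is asserted via a ``doubling-type behaviour'' that the visual metric on $\partial X_a^{(N')}$ need not have.
\item Each re-application of uniform perfectness at a new boundary point (such as $q_1\in\partial_*^{N_1}X_a$) outputs points in $\partial X_a^{(N'(N_1))}$, with a new and larger gauge. Without an a priori bound on the number of steps, neither the Morse gauge of the final triple nor $K$ would be uniform.
\end{enumerate}
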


\begin{proof}

For any point $o\in X$, since $X\in\mathcal{UMB}(N)$, we take a point $p\in \partial_*^{N}X_o$.

By virtue of the uniform perfectness of the Morse boundary $\partial_* X$, there exist a Morse gauge $N'=N'(N)$ and constants $S=S(N)> 1$ and $r_0=r_0(N)\in (0, 1]$ such that for any $r\in (0, r_0]$, one can find a point $q\in \partial_* X$ satisfying
\[
p, q\in \partial X^{(N')}_o \quad \text{and} \quad \frac{r}{S}<d_{o,\epsilon_{N'}}(p,q)\le r.
\]

In particular, setting $r=r_0$ and $\displaystyle r=\frac{r_0}{2S}$ respectively, we can choose two points $q_1, q_2\in \partial X^{(N')}_o$ that satisfy
\[
\frac{r_0}{S}<d_{o,\epsilon_{N'}}(p,q_1)\le r_0\quad\text{and}\quad
\frac{r_0}{2S^2}<d_{o,\epsilon_{N'}}(p,q_2)\le \frac{r_0}{2S}.
\]
It then follows that
\[
\frac{r_0}{2S}<d_{o,\epsilon_{N'}}(q_1,q_2).
\]

Moreover, all the distances $d_{o,\epsilon_{N'}}(p,q_2)$, $d_{o,\epsilon_{N'}}(p,q_1)$ and $d_{o,\epsilon_{N'}}(q_1,q_2)$ admit a uniform lower bound of $\displaystyle\frac{r_0}{2S^2}$.
According to Remark~\ref{dis}, the distances $d(o, [p, q_1])$, $d(o, [p, q_2])$ and $d(o, [q_1, q_2])$ are uniformly bounded above by
\[
K=\frac{1}{\epsilon_{N'}}\ln\left(\frac{2S^2D_{N'}}{r_0}\right)>0.
\]
Since $q_1, q_2\in \partial X_o^{(N')}$, Theorem~\ref{relation} ensures that the geodesics $[o, q_1]$ and $[o, q_2]$ are $N_1$-Morse, where $N_1$ depends only on $N'$. 
Combining this with the fact that $[o, p]$ is an $N$-Morse geodesic, 
Lemma~\ref{slim-M} implies the existence of a Morse gauge $N_0$ (depending only on $N'$ and $N_1$) such that the geodesics $[p, q_1]$, $[p, q_2]$ and $[q_1, q_2]$ are all $N_0$-Morse. 
Hence, we have $(p, q_1, q_2)\in \partial_*X^{(3, N_0)}$.

To summarize, for any $o\in X$, the point $o\in E_{K}(p, q_1, q_2)$ with $(p, q_1,q_2)\in \partial_*X^{(3, N_0)}$. As both $K$ and $N_0$ depend exclusively on $N$, we conclude that the space $X$ is center-exhaustive.

\end{proof}

\begin{Proposition}
Suppose that $X$ is a proper geodesic metric space which is center-exhaustive, then $X$ is Morse boundary rigid.    
\end{Proposition}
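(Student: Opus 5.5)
We need to show: if a quasi-isometry $f\colon X\to X$, say a $(\lambda,c)$-quasi-isometry, satisfies $\partial f=\mathrm{id}_{\partial_*X}$, then $\sup_{x\in X}d(x,f(x))<\infty$. The plan is to use center-exhaustiveness to localize every point $x$ as a coarse center of a Morse ideal triangle. We then show that $f(x)$ is a coarse center of the same triangle with controlled constants, and conclude with the diameter bound of Lemma~\ref{Lemma:Center}.

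First, fix the Morse gauge $N$ and constant $K$ from Definition~\ref{def:center-exh}. Given $x\in X$, choose $(p,q,r)\in\partial_*X^{(3,N)}$ and an ideal triangle $\triangle(p,q,r)$ with sides $\alpha_1=[p,q]$, $\alpha_2=[q,r]$, $\alpha_3=[p,r]$ such that $d(x,\alpha_i)\le K$ for $i=1,2,3$. Since $\partial f=\mathrm{id}$, for each side the image $f(\alpha_i)$ is a $(\lambda,c)$-quasi-geodesic whose endpoints at infinity are the same as those of $\alpha_i$.

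The key step, and the one I expect to be the main obstacle, is to show that $f(\alpha_i)$ lies in the $N_1$-neighborhood of $\alpha_i$ for some $N_1$ depending only on $N,\lambda,c$. The difficulty is that $f(\alpha_i)$ is a bi-infinite quasi-geodesic, so the Morse property cannot be applied directly. I would first try the standard argument used to show that quasi-isometries induce maps on the Morse boundary (\cite{Cordes17}, also used in the proof of Lemma~\ref{30QI}). By that argument, $f(\alpha_i)$ is a quasi-geodesic that is Morse with gauge depending only on $N,\lambda,c$, and it has the same endpoints as $\alpha_i$. Two bi-infinite Morse quasi-geodesics with the same endpoints in $\partial_*X$ are at Hausdorff distance bounded by a constant depending only on their gauges. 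Concretely, I would truncate at far points $u_n\to p$ and $v_n\to q$ along $\alpha_i$. Using the asymptotic estimate from the proof of Proposition~2.4 in \cite{Cordes17}, the points $f(u_n)$ and $f(v_n)$ lie at bounded distance $D_0$ from points of $\alpha_i$ far out. Then the quasi-geodesic $f([u_n,v_n]_{\alpha_i})$, extended by short segments, has endpoints near $\alpha_i$. The Morse property of $\alpha_i$ (via Lemma~\ref{Lemma:SubpathMorse}) then gives the uniform neighborhood bound on compact pieces, and letting $n\to\infty$ gives it on all of $f(\alpha_i)$.

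Once this is in hand, pick $y_i\in\alpha_i$ with $d(x,y_i)\le K$. Then $d(f(x),f(y_i))\le\lambda K+c$ and $d(f(y_i),\alpha_i)\le N_1$. Hence $f(x)$ lies within $K'=\lambda K+c+N_1$ of all three sides of $\triangle(p,q,r)$, so $f(x)\in E_{K'}(p,q,r)$. Enlarging $K'$ if needed so that $K'\ge\max\{K,\delta_N\}$, both $x$ and $f(x)$ lie in $E_{K'}(p,q,r)$. Lemma~\ref{Lemma:Center} then gives $d(x,f(x))\le L(N,K')$. This bound is independent of $x$, so $\sup_x d(x,f(x))<\infty$ and $X$ is Morse boundary rigid by Definition~\ref{MBR}.

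One subtlety remains: Lemma~\ref{Lemma:Center} refers to points near the sides of \emph{some} ideal triangle on $(p,q,r)$. We use the same triangle for both $x$ and $f(x)$, so this is harmless.
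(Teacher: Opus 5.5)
Your proposal is correct and follows essentially the same route as the paper: realize $x$ as a $K$-center of a Morse triple, use $\partial f=\mathrm{id}$ to place $f(x)$ within uniformly bounded distance of the same three sides, and conclude with the diameter bound of Lemma~\ref{Lemma:Center}. The only difference is that the paper simply asserts that $f(\alpha_i)$ lies within uniformly bounded Hausdorff distance of $\alpha_i$, whereas you justify this step with the truncation argument and the Morse property of $\alpha_i$.
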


\begin{proof}
Suppose that $f: X\to X$ is a quasi-isometry such that the induced homeomorphism $\partial f:\partial_*X\to\partial_*X$ is the identity. 
Since $X$ is center-exhaustive, there exists a Morse gauge $N$ and a constant $K\ge0$ such that every $x\in X$ is a $K$-center of some triple $(p,q,r)\in\partial_*X^{(3, N)}$. That is, $x$ lies within $K$ of all three sides of some ideal triangle $\triangle(p, q, r)$. Under the quasi-isometry \( f \), the images of the sides of \( \triangle(p,q,r) \) are quasi-geodesics that lie within a uniformly bounded Hausdorff distance $D$ from the original sides, where $D$ depends only on $f$ and $N$.
Note that \( f(x) \) is at a uniformly bounded distance $D_1$ from these three image quasi-geodesics, where $D_1$ depends only on $f$ and $K$. Hence  \( f(x) \) is at a uniformly bounded distance $D_1+D$ from the three sides of \( \triangle(p,q,r) \). Since \( x \) is a \( K \)-center of \( \triangle(p,q,r) \), this implies 
\[
x, f(x)\in E_{K'}(p, q, r), \quad\text{where}\quad K'=\max\{K, D_1+D, \delta_N\}.
\]
By Lemma~\ref{Lemma:Center}, the distance $d(x, f(x))\le L$, where $L=L(K, D_1+D, N)$. 
Consequently, the displacement \( \sup_{x \in X} d(x, f(x)) \) is finite, so \( X \) is Morse boundary rigid. 
\end{proof}

\section{Quasi-isometry invariants}
In this section, we will prove that many properties defined in the above section are preserved under quasi-isometries.

\begin{Proposition}
     Let $(X, d_X)$ and $(Y, d_Y)$ be proper geodesic metric spaces. If $f: X \to Y$ is a quasi-isometry and $X$ is center-exhaustive, then $Y$ is center-exhaustive.
\end{Proposition}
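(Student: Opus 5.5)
The plan is to transport ideal triangles and their coarse centers through $f$ directly, using the standard facts that quasi-isometries induce homeomorphisms of Morse boundaries and preserve Morseness quantitatively. Let $X$ be center-exhaustive with gauge $N$ and constant $K$, and let $f$ be a $(\lambda,\epsilon)$-quasi-isometry with quasi-inverse $g$. Given $y\in Y$, pick $x\in X$ with $d_Y(f(x),y)\le \epsilon$. Choose $(p,q,r)\in\partial_*X^{(3,N)}$ with $x\in E_K(p,q,r)$, realized by an ideal triangle with sides $\alpha_1,\alpha_2,\alpha_3$.

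First I would show that $(\partial f(p),\partial f(q),\partial f(r))\in\partial_*Y^{(3,N')}$ for a gauge $N'$ depending only on $N,\lambda,\epsilon$. The image $f(\alpha_i)$ is a $(\lambda,\epsilon)$-quasi-geodesic, and it is $N''$-Morse with $N''=N''(N,\lambda,\epsilon)$ by the standard quasi-isometry invariance of the Morse property (as in \cite{Cordes17}). Its endpoints are distinct points of $\partial_*Y$ because $\partial f$ is a bijection. The main obstacle is producing an actual bi-infinite geodesic in $Y$ between $\partial f(p)$ and $\partial f(q)$ that is uniformly Morse and uniformly Hausdorff-close to $f(\alpha_1)$. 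I would do this with an Arzelà--Ascoli limit of geodesic segments $[f(\alpha_1(-n)),f(\alpha_1(n))]$, which is available since $Y$ is proper. By the Morse property of $f(\alpha_1)$, each segment lies within $N''(1,0)$ of $f(\alpha_1)$, so the segments pass within a bounded distance of $f(\alpha_1(0))$ and a limit geodesic $\beta_1$ exists. The standard lemma that a geodesic in a bounded neighborhood of a Morse quasi-geodesic is Morse with controlled gauge (Lemma 2.1 of \cite{Cordes17}) then gives that $\beta_1$ is $N'$-Morse and $d_{\mathcal H}(\beta_1,f(\alpha_1))\le D(N,\lambda,\epsilon)$. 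Since the triples in $\partial_*Y^{(3,N')}$ require every geodesic between the given endpoints to be $N'$-Morse, I would also note that any geodesic with the same endpoints is at bounded Hausdorff distance from $\beta_1$, using the slim bigon case of Lemma~\ref{slim-M}, and hence is uniformly Morse as well.

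Next, I would bound the distance from $y$ to each side. Since $d_X(x,\alpha_i)\le K$, we get $d_Y(f(x),f(\alpha_i))\le \lambda K+\epsilon$. Hence
\[
d_Y(y,\beta_i)\le \epsilon+\lambda K+\epsilon+D=:K'.
\]
The sides $\beta_1,\beta_2,\beta_3$ form an ideal triangle on $(\partial f(p),\partial f(q),\partial f(r))$. Therefore $y\in E_{K'}(\partial f(p),\partial f(q),\partial f(r))$, with $K'$ and $N'$ independent of $y$, and so $Y$ is center-exhaustive.
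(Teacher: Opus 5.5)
Your proposal is correct and takes essentially the same route as the paper: choose $x$ with $f(x)$ near $y$, push the ideal triangle centered at $x$ forward by $f$, replace each image quasi-geodesic by a uniformly Morse bi-infinite geodesic at uniformly bounded Hausdorff distance, and conclude that $y$ is a uniform coarse center of $(\partial f(p),\partial f(q),\partial f(r))$. You also supply details the paper only asserts: the Arzel\`a--Ascoli construction of the $\beta_i$, the check that every geodesic between the image endpoints is uniformly Morse, and the $\lambda K+\epsilon$ term in the final constant, which the paper's bound $K'+C$ omits.
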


\begin{proof}
   Since $f(X)$ is $C$-dense in $Y$ for some constant $C$ depending only on $f$, for every $y\in Y$, there exists a point $x\in X$ such that $d_Y(f(x), y)\le C$.
   As $X$ is center-exhaustive, there exist a Morse gauge $N$ and a constant $K\ge 0$ such that every $x\in X$ is a $K$-center of some $(p,q,r)\in\partial_*X^{(3,N)}$. Under the quasi-isometry $f$, the images of bi-infinite $N$-Morse geodesics are quasi-geodesics that lie within uniformly finite Hausdorff distance $K'$ from some bi-infinite $N'$-Morse geodesic, where the Morse gauge $N'$ and constant $K'$ depend only on $N$ and $f$. It follows that $y$ is $K'+C$-center of $(\partial f(p), \partial f(q), \partial f(r))\in \partial_*Y^{(3, N')}$. Here, both $K'+C$ and $N'$ depend only on $f$ and $X$. Therefore, $Y$ is center-exhaustive.
   
\end{proof}

From Proposition~\ref{MGR-CCF} and Proposition~\ref{cce-mgr}, a proper geodesic metric space $X$ is Morse geodesically rich if and only if it is center-exhaustive. Since the center-exhaustive property is a quasi-isometry invariant, it follows that Morse geodesic richness is also a quasi-isometry invariant.
\begin{Proposition}
     Let $(X, d_X)$ and $(Y, d_Y)$ be proper geodesic metric spaces. If $f: X \to Y$ is a quasi-isometry and $X$ is Morse geodesically rich, then $Y$ is Morse geodesically rich.
\end{Proposition}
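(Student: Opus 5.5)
The plan is to pass through center-exhaustiveness rather than transport the geodesic-richness condition directly. Propositions~\ref{MGR-CCF} and~\ref{cce-mgr} show that, for proper geodesic metric spaces, Morse geodesic richness is equivalent to center-exhaustiveness. The preceding proposition shows that center-exhaustiveness is preserved by quasi-isometries. The proof is then a three-step chain.

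\textbf{Step 1.} Suppose $X$ is $(N_0,C_0)$-Morse geodesically rich. By Proposition~\ref{MGR-CCF}, $X$ is center-exhaustive. Concretely, there are a Morse gauge $N_3$ and a constant $K$, depending only on $N_0$ and $C_0$, such that every point of $X$ is a $K$-center of some triple in $\partial_*X^{(3,N_3)}$.

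\textbf{Step 2.} Apply the previous proposition to the quasi-isometry $f\colon X\to Y$. It gives that $Y$ is center-exhaustive, with gauge $N'$ and constant $K'+C$ depending only on $N_3$, $K$ and the quasi-isometry constants of $f$. The input used there is that $f$ sends a bi-infinite $N_3$-Morse geodesic to a quasi-geodesic lying within uniformly bounded Hausdorff distance of a bi-infinite $N'$-Morse geodesic. This is the standard Morse stability of quasi-geodesics, as in \cite{Cordes17}, and we take it as given.

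\textbf{Step 3.} Apply Proposition~\ref{cce-mgr} to $Y$. This yields that $Y$ is $(N',C_0')$-Morse geodesically rich with $C_0'=K'+C+1$. For each gauge $N$ on $Y$, the accompanying constant $C_1(N)$ is the constant $D+1$ produced in the claim of that proof.

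No genuine obstacle is expected, since all the work is contained in the three cited propositions. The one point to check is the first requirement in Definition~\ref{def:center-exh}: the Morse gauge and the constant for $Y$ must be uniform over all points $y\in Y$. This holds because the centers in $X$ come with uniform data $(N_3,K)$, and $f$ has fixed quasi-isometry constants, so the same $(N',K'+C)$ works at every $y$.

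A direct proof is also possible but is more laborious. One would take an $N$-Morse geodesic $[y_1,y_2]$ in $Y$ and pull it back by a quasi-inverse $\bar f$, obtaining a uniformly Morse geodesic $[\bar f(y_1),\bar f(y_2)]$ in $X$. Applying richness in $X$ gives a geodesic $\alpha$, which one pushes forward by $f$ and straightens to a Morse geodesic $\alpha'$ in $Y$. The hard part of this route is the estimate $|d(y_1,y_2)-d(y_2,\alpha')|<C_1$. Since quasi-isometries distort distances multiplicatively, this additive control cannot be obtained directly. One would need to replace it by the projection statement of Lemma~\ref{keybound}, namely that the nearest-point projections of the two points onto $\alpha$ are uniformly close, and then transport that statement. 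Routing through center-exhaustiveness avoids this issue entirely, which is why I would follow the chain above.
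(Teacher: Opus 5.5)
Your proposal is correct and follows the paper's own argument: the paper proves this proposition by combining Propositions~\ref{MGR-CCF} and~\ref{cce-mgr}, which show that Morse geodesic richness is equivalent to center-exhaustiveness, with the preceding result that center-exhaustiveness is a quasi-isometry invariant. Your tracking of the constants ($N_3$, $K$, $N'$, $C_0'=K'+C+1$) agrees with those proofs, although the paper does not write this bookkeeping out.
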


For hyperbolic spaces, it is shown in \cite{LiangZhou24} that boundary rigidity is a quasi-isometry invariant.
The Morse boundary is also a quasi-isometry invariant and behaves very well under quasi-isometries.
Thus, it is natural to ask whether Morse boundary rigidity is a quasi-isometry invariant.
To this end, we present the following proposition.

\begin{Proposition}
    Let $(X, d_X)$ and $(Y, d_Y)$ be two proper geodesic metric spaces with non-empty Morse boundaries. If $f: X\to Y$ is a quasi-isometry and $Y$ is Morse boundary rigid, then $X$ is Morse boundary rigid.
\end{Proposition}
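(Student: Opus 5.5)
Take a quasi-isometry $h:X\to X$ with $\partial h=\mathrm{id}_{\partial_*X}$; we must show $\sup_{x\in X}d_X(x,h(x))<\infty$. The plan is to conjugate $h$ to $Y$. Let $\bar f:Y\to X$ be a quasi-inverse of $f$, so that $\bar f\circ f$ is at bounded distance from $\mathrm{id}_X$ and $f\circ\bar f$ is at bounded distance from $\mathrm{id}_Y$. Set $g=f\circ h\circ\bar f:Y\to Y$; as a composition of quasi-isometries, $g$ is a quasi-isometry of $Y$.

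First I will compute $\partial g$. Functoriality of the induced boundary maps gives $\partial(f\circ h\circ\bar f)=\partial f\circ\partial h\circ\partial\bar f$. Quasi-isometries uniformly close to one another induce the same homeomorphism, as recalled before Definition~\ref{MBR}. Hence $\partial\bar f=(\partial f)^{-1}$, since $\partial f\circ\partial\bar f=\partial(f\circ\bar f)=\partial(\mathrm{id}_Y)=\mathrm{id}$. It follows that $\partial g=\partial f\circ\mathrm{id}\circ(\partial f)^{-1}=\mathrm{id}_{\partial_*Y}$.

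The one point needing care, and the main obstacle, is functoriality itself: that $\partial(f_1\circ f_2)=\partial f_1\circ\partial f_2$ for the maps constructed in \cite{CharneySultan15, Cordes17}. Those maps are defined by sending a Morse ray $\gamma$ to the endpoint of the Morse geodesic ray fellow-travelling the quasi-geodesic $f(\gamma)$. I would verify composition directly. If $f_2(\gamma)$ stays within bounded Hausdorff distance of a Morse ray $\gamma'$, then $f_1(f_2(\gamma))$ stays within bounded Hausdorff distance of $f_1(\gamma')$. The latter in turn is within bounded Hausdorff distance of the ray representing $\partial f_1(\gamma'_+)$, using the Morse property and Lemma~2.1 of \cite{Cordes17}. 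This yields the identity.

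Since $Y$ is Morse boundary rigid, there is $B$ with $d_Y(y,g(y))\le B$ for all $y\in Y$. For $x\in X$, put $y=f(x)$. Then $g(y)=f(h(\bar f(f(x))))$, and $\bar f(f(x))$ is within a constant $c$ of $x$. With $f$ a $(K,C)$-quasi-isometry and $h$ a $(K',C')$-quasi-isometry, this gives
\[
d_Y\bigl(f(x),f(h(x))\bigr)\le B+K(K'c+C')+C.
\]
Applying the lower quasi-isometry bound for $f$,
\[
d_X(x,h(x))\le K\bigl(B+K(K'c+C')+2C\bigr),
\]
which is a constant independent of $x$. So $h$ has bounded displacement, and $X$ is Morse boundary rigid.
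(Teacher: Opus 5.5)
Your proof is correct and is essentially the paper's argument. You conjugate $h$ by $f$ and a quasi-inverse, use functoriality together with the invariance of $\partial$ under bounded perturbation to get $\partial(f\circ h\circ\bar f)=\mathrm{id}_{\partial_*Y}$, apply the rigidity of $Y$, and pull the displacement bound back through the lower quasi-isometry inequality for $f$; your explicit check that $\partial(f_1\circ f_2)=\partial f_1\circ\partial f_2$ only spells out what the paper invokes as naturality of the Morse boundary.
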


\begin{proof}

Let \( g: X \to X \) be a quasi-isometry such that the induced homeomorphism on the Morse boundary is the identity, i.e., \( \partial g = \text{id}_{\partial_* X} \). We aim to show there exists a constant \( D > 0 \) satisfying \( \sup_{x \in X} d_X(x, g(x)) < D \).

Let \( f_1: Y \to X \) be a quasi-inverse of \( f \). The composition \( f \circ g \circ f_1: Y \to Y \) is also a quasi-isometry, which induces a homeomorphism \( \partial(f \circ g \circ f_1): \partial_* Y \to \partial_* Y \) on the Morse boundary of \( Y \). By the naturality of the Morse boundary functor, we have:
\[
\partial(f \circ g \circ f_1) = (\partial f) \circ (\partial g) \circ (\partial f_1) = (\partial f) \circ \text{id}_{\partial_* X} \circ (\partial f_1) = \text{id}_{\partial_* Y}.
\]

Since \( Y \) is Morse boundary rigid, there exists a constant \( D_1 > 0 \) such that for all \( y \in Y \):
\[
d_Y\left(y, f\left(g\left(f_1(y)\right)\right)\right) < D_1.
\]
Take an arbitrary \( x \in X \), and set \( y = f(x) \), \( h = f \circ g \), \( x' = f_1(y) \). Suppose \( f \) is a $(K, C)$ quasi-isometry, then
\[
d_X(x, g(x)) \le K d_Y\left(f(x), h(x)\right) + KC.
\]

By the triangle inequality in the metric space \( Y \):
\[
d_Y\left(f(x), h(x)\right) \le d_Y\left(f(x), h(x')\right) + d_Y\left(h(x), h(x')\right).
\]
  and note that
\[
d_Y\left(f(x), h(x')\right) = d_Y\left(y, f\left(g\left(f_1(y)\right)\right)\right) < D_1.
\]
Since \( h = f \circ g \) is a quasi-isometry from \( X \) to \( Y \), there exist constants \( K_1\ge1, C_1 > 0 \) (depending only on \( f \) and \( g \)) such that:
\[
\begin{aligned}
d_X(x, g(x)) 
&\le K \bigl(D_1 + d_Y\bigl(h(x), h(x')\bigr)\bigr) + KC \\
&\le K \bigl(D_1 + K_1 \cdot d_X(x, x') + C_1\bigr) + KC.
\end{aligned}
\]
Since \( f_1 \) is a quasi-inverse of \( f \), \( d_X(x, x') = d_X\left(x, f_1(f(x))\right) \) is uniformly bounded. Let \( M > 0 \) denote this uniform bound.
Let \( D = K \left(D_1 + K_1 M + C_1\right) + KC \). Since \( D \) is a constant independent of \( x \), we conclude \( \sup_{x \in X} d_X(x, g(x)) < D \), as desired.

\end{proof}

\begin{Remark}
   The above proof extends directly to the sublinearly Morse boundary rigid setting, with nearly identical reasoning.
\end{Remark}

 For a finitely generated group $G$ with a finite generating set $S$. The above properties are well-defined for its Cayley graph $\mathcal{G}(G, S)$ with respect to the word metric. Thus, the Morse geodesically rich property, center-exhaustive property, Morse boundary rigidity, and Morse uniform space are well-defined for the group $G$.
 
\subsection*{Applying to Cayley graphs}

\begin{Proposition}\label{cocomp}
    Let $X$ be a proper, cocompact geodesic metric space whose Morse boundary $\partial_*X$ contains at least $3$ points. Then, $X$ is center-exhaustive.
\end{Proposition}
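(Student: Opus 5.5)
The plan is to produce a single ideal triangle with a coarse center and then move it around $X$ using the cocompact isometry group. Cocompactness gives a compact set $B\subseteq X$, which we may take to be a closed ball $\overline{B}(o,R)$, such that the translates $gB$ for $g\in\operatorname{Isom}(X)$ cover $X$.

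First, since $\partial_*X$ has at least three points, pick distinct $p,q,r\in\partial_*X$. Any two distinct points of the Morse boundary are joined by a bi-infinite Morse geodesic. This is standard; see \cite{Cordes17} or \cite{CharneyCordesMurray19}. Since there are only three pairs, a single Morse gauge $N$ works for all of $[p,q]$, $[q,r]$ and $[p,r]$, so $(p,q,r)\in\partial_*X^{(3,N)}$. By Lemma~\ref{Lemma:Center} there is a point $c\in E_{\delta_N}(p,q,r)$.

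Second, transfer this triangle to every point of the ball. For $x\in \overline{B}(o,R)$ we have $d(x,c)\le R+d(o,c)=:R'$. Every point within $R'$ of $c$ lies within $\delta_N+R'$ of each of the three sides. Hence every $x\in\overline{B}(o,R)$ satisfies $x\in E_K(p,q,r)$ with $K=\delta_N+R'$.

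Third, extend to all of $X$. For an arbitrary $y\in X$, choose an isometry $g$ with $g^{-1}y\in \overline{B}(o,R)$. Isometries send $N$-Morse geodesics to $N$-Morse geodesics and ideal triangles to ideal triangles. Therefore $y=g(g^{-1}y)\in E_K(gp,gq,gr)$ with $(gp,gq,gr)\in\partial_*X^{(3,N)}$. Both $N$ and $K$ are independent of $y$, which is exactly Definition~\ref{def:center-exh}.

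The main obstacle is the first step, namely ensuring that the bi-infinite geodesics between the three boundary points exist and are Morse. This needs the known result that two points of $\partial_*X$ in a proper geodesic space are connected by a Morse bi-infinite geodesic. I would cite \cite{Cordes17}, or prove it by an Arzel\`a--Ascoli limit of geodesics $[\sigma_1(n),\sigma_2(n)]$ joining Morse rays $\sigma_1,\sigma_2$ from $o$ representing the two points. The inputs are Lemma~\ref{slim-M}, which keeps these segments uniformly Morse, and the fact that distinct rays diverge, which keeps the segments passing through a bounded region near $o$.

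A secondary point is the meaning of "cocompact". If it means an isometry group action with compact quotient, the argument above applies directly. If it means only that there is a compact $B$ whose translates under some family of self-isometries cover $X$, the same argument still works, because only the isometry property of each individual translation is used.
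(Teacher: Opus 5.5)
Your proposal is correct and follows essentially the same route as the paper. You fix one triple $(p,q,r)\in\partial_*X^{(3,N)}$ with a $\delta_N$-center, then use the cocompact isometric action, which preserves Morse gauges and ideal triangles, to place a translated triangle within uniformly bounded distance of every point (the paper measures directly from the orbit of the center rather than from a ball around $o$, and simply assumes the $N$-Morse bi-infinite sides exist, which you justify more carefully).
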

\begin{proof}
    Let $p, q, r\in \partial_*X$ be the three distinct points. Fix an ideal triangle $\triangle(p, q, r)$ and suppose that $(p, q, r)\in \partial_*X^{(3, N)}$ for some Morse gauge $N$. Fix a coarse center $x_0$ of $(p, q, r)$ such that $x_0$ lies within $\delta_N$ of all three sides of some ideal triangle $\triangle(p, q, r)$.
    By the hypothesis that $X$ is cocompact, there exists a group $G$ acting cocompactly by isometries on $X$. 
    There exists a constant $R>0$ such that for any $x\in X$, we have some $g\in G$ such that $d(g(x_0), x)\le R$.
    Isometries preserve the Morse gauges of bi-infinite geodesics. This implies that the point $x$ lies within $R+\delta_N$ of all three sides of some ideal triangle
    $\triangle(p', q', r')$, where $(p', q', r')\in \partial_*X^{(3, N)}$. This completes the proof.

\end{proof}

Theorem~\ref{thm:cocompact} easily comes from Proposition~\ref{cocomp} and Theorem~\ref{thm:equivalence}.

The following proposition is from \cite{Liu21}. We call a finitely generated group $G$ a \textit{non-elementary Morse group} if it is not virtually cyclic and has non-empty Morse boundary.
\begin{Proposition}
    Let $G$ be a non-elementary Morse group. Then its Morse boundary contains infinitely many points.
\end{Proposition}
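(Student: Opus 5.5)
The statement concerns a finitely generated group $G$ that is not virtually cyclic and has $\partial_*G\neq\emptyset$; we must show $\partial_*G$ is infinite. I would argue via the action of $G$ on its Cayley graph $X=\mathcal{G}(G,S)$ and the north–south-type dynamics of Morse elements. This is also the route taken in \cite{Liu21}.

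\textbf{Step 1: produce a Morse element.} Since $\partial_*G\neq\emptyset$, there is an $N$-Morse geodesic ray $\sigma$ from the identity. Cocompactness is automatic here because $G$ acts on $X$ with quotient a point. The plan is to follow the standard argument (Cordes, Sisto) that a non-empty Morse boundary in a group yields a Morse element, that is, an infinite-order $g$ whose orbit $n\mapsto g^n$ is a Morse quasi-geodesic. Translate long subsegments of $\sigma$ back to the identity by group elements. By local finiteness, two such translates agree on a long initial piece. Concatenating gives a periodic quasi-geodesic, and its Morse gauge is controlled by $N$ together with the local-to-global behaviour of Morse segments. \emph{This is the main obstacle.} The difficulty is ensuring the concatenated periodic path remains Morse. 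I would try first to cite it directly, since Morse elements exist whenever $\partial_*G\neq\emptyset$ for finitely generated groups (Cordes–Hume, Sisto). The element $g$ then determines two distinct boundary points $g^{\pm\infty}\in\partial_*G$.

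\textbf{Step 2: use non-elementarity.} Consider $E(g)$, the stabilizer of $\{g^{\pm\infty}\}$. Because the orbit of $g$ is Morse, $E(g)$ is virtually $\langle g\rangle$; this is the Morse analogue of hyperbolic elements being "WPD-like". Since $G$ is not virtually cyclic, there is some $h\notin E(g)$. Then $h\{g^{+\infty},g^{-\infty}\}\neq\{g^{+\infty},g^{-\infty}\}$, so $hgh^{-1}$ is a Morse element with a different fixed pair. This gives at least three points of $\partial_*G$.

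\textbf{Step 3: get infinitely many points.} Consider the points $g^n h g^{+\infty}$ for $n\in\mathbb{N}$. Since $g$ acts with north–south dynamics on a suitable stratum $\partial_*^{N'}X_e$, these points are pairwise distinct, because $hg^{+\infty}\neq g^{\pm\infty}$. Concretely, distinctness follows from the fact that $g^n h g^{+\infty}$ is represented by the concatenation of the Morse axis piece $[e,g^n]$ with the translate $g^n h[\,e,g^{+\infty})$. By Lemma~\ref{slim-M} and Lemma~\ref{30QI}, the geodesic from $e$ to $g^nhg^{+\infty}$ fellow-travels the axis of $g$ for distance roughly $n|g|$ before diverging. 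Two such rays with different $n$ therefore diverge at different times and cannot be asymptotic. Hence $\partial_*G$ is infinite.
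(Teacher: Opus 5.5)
\textbf{Gap in Step~1.} Your Steps 2 and 3 are sound once a Morse element $g$ is available. An element fixing $\{g^{\pm\infty}\}$ moves the Morse orbit $\langle g\rangle$ a uniformly bounded Hausdorff distance, so $E(g)$ is virtually $\langle g\rangle$. North--south dynamics then separates the points $g^nhg^{\pm\infty}$; you may need $hg^{-\infty}$ rather than $hg^{+\infty}$, since you only know $h\{g^{\pm\infty}\}\neq\{g^{\pm\infty}\}$.

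The genuine gap is Step~1, and it cannot be repaired along your route. A non-empty Morse boundary does not yield a Morse element in a general finitely generated group. The result you plan to cite is not available, because the statement is false in general.

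Your periodic concatenation is only an $R$-local Morse geodesic: each subpath of length at most $R$ is a translate of a piece of $\sigma$. Upgrading local Morse geodesics to global Morse quasi-geodesics is exactly the Morse local-to-global property. That is an extra hypothesis, and it fails in general. In fact E.~Fink constructed finitely generated infinite torsion groups whose Cayley graphs contain Morse geodesics. These are non-elementary Morse groups in the sense of the statement, yet they have no element of infinite order at all. So your argument covers only groups that happen to contain a Morse element. The paper gives no proof of its own; it imports Corollary~5.8 of \cite{Liu21}, which asserts the result for all non-elementary Morse groups.

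\textbf{A repair that avoids Morse elements.} Use the cocompact action directly instead of looking for a Morse element.
\begin{enumerate}
\item Let $\sigma$ be an $N$-Morse ray from $e$. The translates $t\mapsto\sigma(n)^{-1}\sigma(t+n)$ are $N$-Morse geodesics through $e$.
\item By Arzel\`a--Ascoli, a subsequence converges to a bi-infinite geodesic $\gamma$ through $e$. This $\gamma$ is $N''$-Morse, with $N''$ depending only on $N$.
\item Hence every $g\in G$ lies on the $N''$-Morse bi-infinite geodesic $g\gamma$.
\item Suppose $\partial_*G$ were finite. Then these lines realise only finitely many endpoint pairs. Two $N''$-Morse bi-infinite geodesics with common endpoints are at Hausdorff distance bounded in terms of $N''$.
\item So $G$ would lie in a uniform neighbourhood of finitely many bi-infinite geodesics. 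Then $|B(e,R)|$ grows at most linearly in $R$, and $G$ is virtually $\mathbb{Z}$, a contradiction.
\end{enumerate}
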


Thus, we obtain the following corollary, which coincides with Theorem~\ref{groupapp}.

\begin{Corollary}
      Let $G$ be a non-elementary Morse group. Then $G$ is Morse geodesically rich, center-exhaustive, and its Morse boundary $\partial_*G$ is uniformly perfect. In particular, $G$ is Morse boundary rigid.
\end{Corollary}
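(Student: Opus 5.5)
The Corollary follows by chaining results already established; I will not argue anything new about Morse geodesics. The plan is to place a non-elementary Morse group $G$, via a Cayley graph, in the cocompact setting of Proposition~\ref{cocomp} and then apply Theorem~\ref{thm:equivalence}.

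\textbf{Step 1: pass to a Cayley graph.} Fix a finite generating set $S$ and let $X=\mathcal{G}(G,S)$ with the word metric. Then $X$ is a proper geodesic metric space and $G$ acts on it cocompactly by isometries. The properties in the statement are defined for $G$ through this Cayley graph. The preceding Section shows they are quasi-isometry invariants: Morse geodesic richness, center-exhaustiveness, and Morse boundary rigidity, the last by the proposition for rigid targets applied in both directions. Hence they do not depend on the choice of $S$.

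\textbf{Step 2: at least three boundary points, hence center-exhaustive.} By the proposition quoted from \cite{Liu21}, $\partial_*G=\partial_*X$ contains infinitely many points, so in particular at least three. Proposition~\ref{cocomp} then shows that $X$ is center-exhaustive.

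\textbf{Step 3: invoke the equivalence theorem.} Since $|\partial_*X|\ge 3$, Theorem~\ref{thm:equivalence} applies. Center-exhaustiveness is condition~(3), so condition~(2) gives that $X$ is Morse geodesically rich; this is Proposition~\ref{cce-mgr}. Condition~(1) gives that $\partial_*X$ is uniformly perfect; this is Proposition~\ref{MGR-UP}. Finally, the "furthermore" clause, equivalently the proposition that center-exhaustive spaces are Morse boundary rigid, gives Morse boundary rigidity.

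The only point needing any care is the hypothesis-matching in Step 2. The $(3,N)$-triples used in Proposition~\ref{cocomp} need three distinct Morse boundary points joined pairwise by Morse geodesics with a common gauge. This holds because any two distinct points of $\partial_*X$ are joined by a Morse bi-infinite geodesic (standard, cf.\ \cite{Cordes17}), and we may take the maximum of the three gauges. No other step is a genuine obstacle.
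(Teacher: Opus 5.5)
Your proposal is correct and follows essentially the same route as the paper: the result quoted from \cite{Liu21} supplies at least three Morse boundary points, Proposition~\ref{cocomp} gives center-exhaustiveness of the proper, cocompact Cayley graph, and Theorem~\ref{thm:equivalence} then yields Morse geodesic richness, uniform perfectness and Morse boundary rigidity; this is exactly Theorem~\ref{thm:cocompact} specialized to groups. Your Step~1 remark on independence of the generating set is harmless but not needed, since the argument applies verbatim to every finite generating set.
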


 As noted in the introduction, Morse groups form a large class encompassing mapping class groups, Out$(F_n)$, CAT(0) groups, hyperbolic and relative hyperbolic groups, Artin groups, Coxeter groups, 3-manifold groups, and related families.

\section{Establishment of Theorem~\ref{homeo}}

    In this section, we establish Theorem~\ref{homeo}. Recall that each of the spaces of $X$ and $Y$ satisfies one of the three equivalent conditions stated in Theorem~\ref{thm:equivalence}. For the sake of convenience in our subsequent arguments, we may assume without loss of generality that both $X$ and $Y$ are center-exhaustive.

\subsection{The extension of \ensuremath{f}}  
    Let $f:\partial_*X\to \partial_*Y$ be a homeomorphism. We next construct a map $\Phi_{f}$ to be an extension of $f$ to $X$. 
    Our construction follows the approach proposed in Section~4 of \cite{Liu22}. To proceed, we first introduce the following notations and auxiliary concepts.

For the purpose of invoking the subsequent proposition without proof, we adopt the terminology from. Let $\Theta_3(\partial_* X)$ be the set of all distinct triples of points in $\partial_*X$. 
For any $(p, q, r)\in \Theta_3(\partial_*X)$, there exists some Morse gauge $N'$ such that $(p, q, r)\in  \partial_*X^{(3, N')}$. 
We define 
\[
k=1+\inf\! \left\{ K \middle|\!\; E_K(p, q, r)\neq \emptyset \text{ for some Morse gauge } N' \right\},
\] and set $E(p, q, r)=E_k(p, q, r)$. It is worth noting that the constant $k$ and the set $E(p,q,r)$ depend only on the triple $(p, q, r)$, and are independent of the choice of $N'$.

For any $(p, q, r)\in \Theta_3(\partial_*X)$, we define a projection map:
\[
\pi_X:\Theta_3(\partial_*X)\to X, 
\]
\[
\pi_X(p, q, r)=x',
\]
where $x'$ is an arbitrary center point chosen from $E(p ,q, r)$. The projection map $\pi_Y$ from $\Theta_3(\partial_* Y)$ to $Y$ can be defined analogously.

 Suppose that $X$ is center-exhaustive, there exist a Morse gauge $N$ and a constant $K\ge0$ such that for any $x\in X$, there exists some $(p, q, r)\in \partial_*X^{(3, N)}$ satisfying $x\in E_K(p, q, r)$.
Set $x'=\pi_{X}(p, q, r)\in E(p, q, r)=E_{k}(p, q, r)$ with $k\le 1+\delta_N$, where $\delta_N$ is a constant associated with the Morse gauge $N$. We thus have
$x, x'\in E_{K_1}(p, q, r)$, where $K_1=\max\{K, 1+\delta_N\}$. 
By Lemma~\ref{Lemma:Center}, there exists a constant $L\ge0$ depending only on $N$ and $K$ such that $d_X(x, x')\le L$. This implies that for any $x\in X$, 
\[
\pi_{X}^{-1}(B(x, L))\cap \partial_*X^{(3, N)}\neq \emptyset,
\]
where $B(x, L)$ denotes the closed ball of radius $L$ centered at $x$ in $X$.

Since $f:\partial_*X\to \partial_*Y$ is a homeomorphism between two Morse boundaries, we have that the set $\pi_Y(f(\pi_{X}^{-1}(B(x, L))\cap \partial_*X^{(3, N)}))$ is nonempty in $Y$.

We now define the map $\Phi_f$ to be an extension of $f$ to $X$ as follows:
\[
\Phi_f:X\to Y,
\]
where for each $x\in X$, $\Phi_f(x)=y$ and $y$ is an arbitrary point chosen from the set
\[
\pi_Y(f(\pi_{X}^{-1}(B(x, L))\cap \partial_*X^{(3, N)})).
\]
Note that the definition of $\Phi_f$ depends on:
\begin{itemize}
    \item the choice of points in the preimage map $\pi_X^{-1}$ and the projection map $\pi_Y$,
    \item the Morse gauge $N$, the constant $L$, and the homeomorphism $f$.
\end{itemize}
The equality $\Phi_f(x)=y$ is equivalent to the existence of a triple $(p, q, r)\in \partial_*X^{(3, N)}$ such that
\begin{equation}\label{extension}
    d_{X}(\pi_X(p, q, r), x)\le L \quad \text{and} \quad \pi_Y(f(p), f(q), f(r))=y.
\end{equation}

\subsection{Propositions for boundedness}

We first state a proposition (originally from Proposition~4.1 of \cite{Liu22}) that plays a critical role in our argument; its proof relies solely on the properties of the homeomorphism $f$. We adapt it to our setting as follows:

\begin{Proposition}\label{key}
    Let $X, Y$ be two proper geodesic metric spaces, and suppose $X$ is center-exhaustive. Let $f:\partial_*X\to \partial_*Y$ be a homeomorphism that is quasi-symmetric, bi-H\"older, or strongly quasi-conformal. Then for any Morse gauge $N_1$, there exists a function $\eta_{N_1}:(0, \infty)\to(0, \infty)$ such that for all $(p, q, r), (p',q',r')\in \partial_*X^{(3, N_1)}$, 
    \[
    \begin{aligned}
   d_X\!\bigl(\pi_X(p, q, r), \pi_X(p', q',r')\bigr) \le \theta &\implies \\
d_Y\!\bigl(\pi_Y(f(p), f(q), f(r)), \pi_Y(f(p'),f(q'),f(r'))\bigr) &\le \eta_{N_1}(\theta).
    \end{aligned}
    \]

\end{Proposition}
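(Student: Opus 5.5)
The strategy is to convert closeness of coarse centers in $X$ into a statement about visual distances between the boundary points, seen from a single basepoint. The hypothesis on $f$ then transfers that statement to $\partial_*Y$, and we convert back to closeness of centers in $Y$.

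\textbf{Step 1: from centers to the visual metric.} Put $x=\pi_X(p,q,r)$, so that $x$ lies within $k\le 1+\delta_{N_1}$ of all three sides of $\triangle(p,q,r)$. By Lemma~\ref{slim-M} and Theorem~\ref{relation}, the rays $[x,p],[x,q],[x,r]$ are $N_2$-Morse, so all six points lie in $\partial X_x^{(N_2)}$. Here $N_2$ depends only on $N_1$, and for $p',q',r'$ we use that $d_X(x,x')\le\theta$. Remark~\ref{dis} then gives $d_{x,\epsilon}(p,q)\asymp \exp(-\epsilon\, d(x,[p,q]))$. Hence the three pairwise visual distances among $p,q,r$ are bounded below by a constant $c(N_1)$. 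The pairwise distances among $p',q',r'$ are bounded below by $c(N_1)e^{-\epsilon\theta}$, because $d(x,[p',q'])\le\theta+k$. So both triples are uniformly separated in the visual metric based at $x$.

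\textbf{Step 2: apply the hypothesis on $f$.} We use the basepoint $y=\pi_Y(f(p),f(q),f(r))$ in $Y$. The quasi-symmetric, bi-H\"older, or strongly quasi-conformal condition, as formulated with respect to these visual metrics, will be used in the following form. Uniform separation of $\{p,q,r,p',q',r'\}$ at scale depending on $\theta$ implies uniform separation of the image points in the visual metric based at $y$. For quasi-symmetry this is controlled by cross-ratio type ratios such as $d(f(p'),f(q'))/d(f(p'),f(p))$. For bi-H\"older maps it follows from the power bounds directly. In each case we obtain a bound depending only on $N_1$, $\theta$ and the control function of $f$. We also have to check that $f(p),\dots,f(r')$ lie in a common $\partial Y_y^{(N_3)}$ with $N_3$ controlled. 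I expect this to come from the fact that $f$ is one of these homeomorphisms relative to the Morse strata.

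\textbf{Step 3: back to centers in $Y$.} Lower bounds on visual distances give, again by Remark~\ref{dis}, an upper bound $d(y,[f(p'),f(q')])\le R(\theta)$, and likewise for the other two sides of $\triangle(f(p'),f(q'),f(r'))$. So $y\in E_{R}(f(p'),f(q'),f(r'))$, and so does $y'=\pi_Y(f(p'),f(q'),f(r'))$. Lemma~\ref{Lemma:Center} then bounds $d_Y(y,y')$ by some $L(N_3,R)=:\eta_{N_1}(\theta)$.

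\textbf{Main obstacle.} I expect Step 2 to be the hardest part: keeping the Morse gauges of the image triples uniformly controlled, and making the distortion estimate independent of the basepoint. I would first try to mimic Proposition~4.1 of \cite{Liu22}, whose proof uses only the distortion properties of $f$, and adapt its hypotheses to the gauge-by-gauge visual metrics $d_{o,\epsilon_N}$ used in Definition~\ref{UP}.
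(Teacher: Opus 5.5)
Your outline follows the same route as the paper, which gives no independent proof here but imports Proposition~4.1 of \cite{Liu22}. In both, centers are turned into visual separation via Remark~\ref{dis}, and the hypothesis on $f$ transfers that separation to $Y$. That hypothesis includes exactly the gauge control you anticipate, namely basetriangle stability $f(\partial X_x^{(M)})\subset\partial Y_y^{(M')}$ for centers $x,y$ of corresponding triples (cf.\ the Claim in the proof of Theorem~\ref{homeo}), and Lemma~\ref{Lemma:Center} then converts back. The one fix is in Step~2: Step~1 separates each triple but not cross pairs (e.g.\ $p=p'$ is allowed), so in the quasi-symmetric case you should anchor the ratio at whichever of $p,q,r$ has image at $d_y$-distance bounded below from $f(p')$; such a point exists because $f(p),f(q),f(r)$ are pairwise separated in the visual metric based at their center $y$.
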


Fix the Morse gauge $N$ and constant $L$ used in the construction of $\Phi_f$. Combining these with Proposition~\ref{key}, we obtain the following boundedness result for the extension map:

\begin{Proposition}\label{extenbound}
   For any $t\ge0$, there exists a constant $D=D(t, N, L, f)$ such that for all $x_1, x_2\in X$ with $d_X(x_1, x_2)\le t$, we have
    \[
    d_Y(\Phi_f(x_1), \Phi_f(x_2))\le D.
    \]
\end{Proposition}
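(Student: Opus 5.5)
The plan is to unwind the definition of $\Phi_f$ via \eqref{extension} and then apply Proposition~\ref{key} with the fixed Morse gauge $N$.

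First, unwind the definitions. Let $x_1,x_2\in X$ with $d_X(x_1,x_2)\le t$, and write $y_i=\Phi_f(x_i)$. By \eqref{extension}, for $i=1,2$ there are triples $(p_i,q_i,r_i)\in\partial_*X^{(3,N)}$ such that
\[
d_X\bigl(\pi_X(p_i,q_i,r_i),x_i\bigr)\le L\quad\text{and}\quad \pi_Y\bigl(f(p_i),f(q_i),f(r_i)\bigr)=y_i .
\]
The triangle inequality then gives
\[
d_X\bigl(\pi_X(p_1,q_1,r_1),\pi_X(p_2,q_2,r_2)\bigr)\le 2L+t .
\]

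Second, transfer the bound to $Y$. Both triples lie in $\partial_*X^{(3,N)}$, so Proposition~\ref{key} applies with $N_1=N$ and $\theta=2L+t$. It yields
\[
d_Y(y_1,y_2)\le \eta_N(2L+t).
\]
We therefore set $D:=\eta_N(2L+t)$. This constant depends only on $t$, $N$, $L$, and on $f$ through $\eta_N$, as the statement requires. It is independent of the arbitrary choices made in defining $\pi_X$, $\pi_Y$ and $\Phi_f$, because Proposition~\ref{key} holds for any choices of centers.

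The only point to check is that the hypotheses of Proposition~\ref{key} hold, i.e.\ that $f$ is quasi-symmetric, bi-H\"older or strongly quasi-conformal. This is the context in which the result will be used (the implications toward (1) in Theorem~\ref{homeo}), so the statement is implicitly made under those assumptions. The proof itself is the two-step triangle-inequality reduction above; all of the real work lies inside Proposition~\ref{key}.
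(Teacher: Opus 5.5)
Your proposal is correct and matches the paper's proof: you choose triples realizing $\Phi_f(x_i)$ via \eqref{extension}, bound the distance between the corresponding $X$-centers by $t+2L$ with the triangle inequality, and apply Proposition~\ref{key} with $N_1=N$ and $\theta=t+2L$ to obtain $D=\eta_N(t+2L)$. Your remark that the hypotheses of Proposition~\ref{key} on $f$ are implicitly in force agrees with how the paper uses this result.
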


\begin{proof}
    For $i=1,2$, choose a triple $(p_i, q_i, r_i)\in\pi_X^{-1}(B(x_i, L))\cap \partial_*X^{(3, N)}$ (nonempty by the construction of $\Phi_f$) such that 
    \[
    \pi_Y(f(p_i), f(q_i),f(r_i))=\Phi_f(x_i).
    \]
    By Equation~\eqref{extension}, we have $d_X(x_i, \pi_X(p_i,q_i,r_i))\le L$ for $i=1,2$. Applying the triangle inequality in $X$, we get
    \[
d_X\!\bigl(\pi_X(p_1, q_1, r_1), \pi_X(p_2,q_2, r_2)\bigr) \le d_X(x_1,x_2) + \sum_{i=1}^2 d_X\!\bigl(x_i,\pi_X(p_i,q_i,r_i)\bigr) \le t+2L.
    \]
    Since $(p_1,q_1,r_1), (p_2,q_2,r_2)\in\partial_*X^{(3, N)}$, Proposition~\ref{key} (with $N_1=N$ and $\theta=t+2L$) implies
    \[
d_Y\!\bigl(\pi_Y(f(p_1),f(q_1),f(r_1)), \pi_Y(f(p_2),f(q_2),f(r_2))\bigr) \le \eta_N(t+2L).
    \]
    By the definition of $\Phi_f$, this gives
    \[
    d_Y(\Phi_f(x_1), \Phi_f(x_2))\le D,
    \]
    where $D=\eta_N(t+2L)$ (and thus $D$ depends only on $t, N, L,$ and $f$, as required).
\end{proof}

    When the homeomorphisms $f$ and $f^{-1}$ are $2$-stable, and quasi-M\"obius, the validity of the preceding propositions was established in \cite{CharneyCordesMurray19} and the claim in the subsequent section holds for free. Although the map $\pi_X$ differs slightly from the construction in that work, the resulting discrepancies are negligible. Accordingly, it suffices to consider the remaining cases.

\subsection{Proof of Theorem~\ref{homeo}}

\begin{proof}
Let $L$ and $N$ denote the fixed constant and Morse gauge, respectively, used in the construction of $\Phi_f$.
Since $Y$ is also center-exhaustive, we replicate the identical construction for $f^{-1}$: we denote by $\Phi_{f^{-1}}$ the map on $Y$ induced by the extension of $f^{-1}$. 
Specifically, there exist a constant $L'\ge0$ and a Morse gauge $N'$ such that 
\[
\Phi_{f^{-1}}:Y\to X,
\]
where for any $y\in Y$, we set $\Phi_{f^{-1}}(y)=x$ with $x$ being an arbitrary point selected from the nonempty set
\[
\pi_X(f^{-1}(\pi_{Y}^{-1}(B(y, L'))\cap \partial_*Y^{(3, N')})).
\]

By Proposition~\ref{extenbound} and following the method of Theorem~4.3 in \cite{Liu22}, we immediately obtain the following linear boundedness: 
\begin{itemize}
    \item There exists a constant $A=\eta_{N}(1+2L)>0$ (depending only on $N, L,$ and $f$) such that for all $x_1, x_2\in X$,
    \[
    d_Y(\Phi_f(x_1), \Phi_f(x_2))\le A d_X(x_1, x_2)+A.
    \]
    \item Similarly, there exists a constant $A'>0$ (depending only on $N', L',$ and $f^{-1}$) such that for all $y_1, y_2\in Y$,
    \[
    d_X(\Phi_{f^{-1}}(y_1), \Phi_{f^{-1}}(y_2))\le A' d_Y(y_1, y_2)+A'.
    \]
\end{itemize}

We now establish that $\Phi_f$ and $\Phi_{f^{-1}}$ are \textit{quasi-inverses}. 
Let $y\in Y$, set $x=\Phi_{f^{-1}}(y)$ and $y_1=\Phi_{f}(x)$. Choose triples 
\[
(p_1, q_1, r_1)\in \pi_X^{-1}(B(x, L))\cap \partial_*X^{(3, N)} \quad \text{and} \quad (f(p), f(q), f(r))\in \pi_Y^{-1}(B(y, L'))\cap \partial_*Y^{(3, N')}
\]
such that
\[
\pi_Y(f(p_1),f(q_1), f(r_1) )=y_1\quad\text{and}\quad\pi_X(p, q, r)=x.
\]
Define $y_0=\pi_Y(f(p), f(q), f(r))$ and $x_1=\pi_X(p_1, q_1, r_1)$.
By construction, we have 
\[
d_Y(y, y_0)\le L' \quad \text{and} \quad d_X(x,x_1)\le L.
\]

 \textbf{Claim:} $(p, q, r)\in \partial_*X^{(3,N_1)}$ for some Morse gauge $N_1=N_1(f^{-1}, N')$.
 
 We defer the proof of this claim to the end of the argument. Assuming the claim holds, let $N_2=\max\{N_1, N\}$; then $(p, q, r), (p_1, q_1,r_1)\in \partial_*X^{(3, N_2)}$. By Proposition~\ref{key}, we have 
 \[
 d_Y(y_0, y_1)\le\eta_{N_2}(L).
 \]
 Applying the triangle inequality in $Y$, it follows that,
 \[
 d_Y(y, y_1)\le d_Y(y, y_0)+d_Y(y_0, y_1)\le L'+\eta_{N_2}(L).
 \]
Thus, there exists a constant $L_1=L'+\eta_{N_2}(L)$ (depending only on  $N, N', f, f^{-1}, L,$ and $L'$) such that for any $y\in Y$,  
\[
d_Y(y, \Phi_f(\Phi_{f^{-1}}(y)))\le L_1.
\]
By a symmetric argument, $d_X(x, \Phi_{f^{-1}}(\Phi_{f}(x)))$ is uniformly bounded for all $x\in X$. Therefore, $\Phi_f$ and $\Phi_{f^{-1}}$ are quasi-isometries. 

The fact that $\Phi_f$ induces $f$ on the Morse boundary follows directly from the proof of Theorem~4.3 in \cite{Liu22}.

\medskip
\paragraph{Proof of the Claim} 
By definitions of quasi-conformal, quasi-symmetric, and bi-H\"older homeomorphisms, the map $f^{-1}$ is a \textit{basetriangle stable map} (see Definition~3.4 in \cite{Liu22}) with respect to the Morse gauge $N'$. 

Since $(f(p), f(q), f(r))\in \partial_*Y^{(3, N')}$ and $y_1\in E(f(p), f(q), f(r))$, Lemma~3.2 in \cite{Liu22} gives $f(p), f(q), f(r)\in \partial Y_{y_1}^{(M)}$ for some Morse gauge $M=M(N')$.

For this Morse gauge $M$, the basetriangle stability of $f^{-1}$ implies there exists a Morse gauge $M_1=M_1(N', M)$ such that for the triple $(f(p), f(q), f(r))$ (which lies in $\partial_*Y^{(3, N')}$) and points $x=\Phi_{f^{-1}}(y)\in E(p, q, r)$, $y_1\in E(f(p), f(q), f(r))$, we have 
\[
f^{-1}(\partial Y_{y_1}^{(M)})\subset \partial X_{x}^{(M_1)}.
\]
Thus, $p, q, r\in \partial X_x^{(M_1)}$. By Lemma~\ref{relation}, there exists a Morse gauge $M_2=M_2(M_1)$ such that $p, q, r\in \partial_*^{M_2}X_x$. 
The geodesics $[x, p],[x, q],[x, r]$ are $M_2$-Morse; by Lemma~\ref{slim-M}, all bi-infinite geodesics $[p, q], [p,r],$ and $[q, r]$ are $N_1$-Morse, where $N_1$ depends only on $M_2$. 

Since $M_2$ depends only on $M_1$, which in turn depends only on $N'$, $M$, and $f^{-1}$ (where $M$ depends only on $N'$), we conclude that $N_1$ depends only on $f^{-1}$ and $N'$. This conclusion relies crucially on both the basetriangle stability of $f^{-1}$ and the center-exhaustiveness of $Y$, completing the proof of the claim.

\end{proof}

\bibliography{Ref}

\bigskip
{School of Mathematics, Hunan University, Changsha, Hunan, 410082, P.R.China}

{\tt Email: hansz@hnu.edu.cn}

\bigskip
{School of Mathematical Sciences \& LPMC, Nankai University, Tianjin 300071, P.R.China}

{\tt Email: qingliu@nankai.edu.cn}

\end{document}